\documentclass[12pt]{amsart}

\usepackage{amsfonts}
\usepackage{amsmath}
\usepackage{amssymb}
\usepackage{mathrsfs}
\usepackage{amscd}
\usepackage[all]{xy}
\usepackage[pdftex,final]{graphicx}

\usepackage{hyperref}

\usepackage{exscale,txfonts}

\setlength{\topmargin}{-1cm}
\setlength{\textwidth}{16cm}
\setlength{\textheight}{24cm}
\setlength{\oddsidemargin}{0cm}
\setlength{\evensidemargin}{0cm}

\setcounter{tocdepth}{1}
\parindent=3pt
\parskip=3pt

\newtheorem{lem}{Lemma}[section]
\newtheorem{thm}[lem]{Theorem}
\newtheorem{prop}[lem]{Proposition}
\newtheorem{cor}[lem]{Corollary}
\theoremstyle{definition}
\newtheorem{defi}[lem]{Definition}

\newtheorem{exa}[lem]{Example}
\newtheorem{rem}[lem]{Remark}




\renewcommand{\iff}{\Longleftrightarrow}

\newcommand{\Q}{\Bbb{Q}}
\newcommand{\F}[1]{\Bbb{F}_{#1}}

\newcommand{\Z}{\Bbb{Z}}

\newcommand{\R}{\Bbb{R}}

\newcommand{\K}{\mathcal{K}}



\newcommand{\one}{\mathbb{D}}
\newcommand{\spl}[2]{\mathrm{SL}_{#1}(#2)}

\newcommand{\gl}[2]{\mathrm{GL}_{#1}(#2)}

\newcommand{\pb}[1]{\mathcal{P}(#1)}
\newcommand{\redpb}[1]{\widehat{\mathcal{P}}(#1)}
\newcommand{\rredpb}[1]{\overline{\mathcal{P}}(#1)}
\newcommand{\rpb}[1]{{\mathcal{RP}}(#1)}
\newcommand{\rrpb}[1]{\widetilde{\mathcal{RP}}(#1)}
\newcommand{\rrrpb}[1]{\widehat{\mathcal{RP}}(#1)}
\newcommand{\rrrrpb}[1]{\overline{\mathcal{RP}}(#1)}

\newcommand{\bl}[1]{\mathcal{B}(#1)}

\newcommand{\rbl}[1]{{\mathcal{RB}}(#1)}
\newcommand{\rrbl}[1]{\widetilde{\mathcal{RB}}(#1)}
\newcommand{\rrrbl}[1]{\widehat{\mathcal{RB}}(#1)}
\newcommand{\rrrrbl}[1]{\overline{\mathcal{RB}}(#1)}
\newcommand{\rblker}[1]{\mathcal{RB}_0(#1)}

\newcommand{\gpb}[1]{\left[ #1\right]} 
\newcommand{\sus}[1]{\left\{ #1\right\} }
\newcommand{\suss}[2]{\psi_{#1}\left( #2\right)}

\newcommand{\ks}[2]{{\K}^{\mbox{\tiny $(#1)$}}_{#2}}
\newcommand{\kks}[1]{{\K}_{#1}}
\newcommand{\kl}[1]{\mathcal{S}_{#1}}
\newcommand{\bconst}[1]{C_{#1}}
\newcommand{\cconst}[1]{D_{#1}}
\newcommand{\cconstmod}[1]{\mathcal{D}_{#1}}

\newcommand{\rcr}{\mathrm{cr}}
\newcommand{\bw}{\Lambda}
\newcommand{\rbw}{\widetilde{\bw}}

\newcommand{\pn}[2]{\mathbb{P}^{#1}(#2)}
\newcommand{\projl}[1]{\pn{1}{#1}}

\newcommand{\sq}[1]{G_{#1}}



\newcommand{\card}[1]{\left| #1 \right|}

\newcommand{\id}[1]{\mathrm{Id}_{#1}}

\renewcommand{\ker}[1]{\mathrm{Ker}(#1)}

\newcommand{\coker}[1]{\mathrm{Coker}(#1)}

\renewcommand{\hom}[3]{\mathrm{Hom}_{#1}(#2,#3)}

\newcommand{\zero}{\{ 0 \}}

\newcommand{\modtwo}[1]{{#1}/2}

\newcommand{\ptor}[2]{{#1}_{(#2)}}

\newcommand{\torsion}[1]{\left(#1\right)_{\mbox{\tiny tors}}}

\newcommand{\sgr}[1]{\mathrm{R}_{#1}}
\newcommand{\an}[1]{\left\langle{#1}\right\rangle}
\newcommand{\pf}[1]{\left\langle\!\left\langle{#1}\right\rangle\!\right\rangle}
\newcommand{\aug}[1]{\mathcal{I}_{#1}}

\newcommand{\igr}[1]{\Z[#1]}

\newcommand{\pp}[1]{\mathrm{p}_{#1}^+}
\newcommand{\ppm}[1]{\mathrm{p}_{#1}^-}
\newcommand{\ep}[1]{\mathrm{e}_{#1}^+} 
\newcommand{\epm}[1]{\mathrm{e}_{#1}^-} 
\newcommand{\idem}[1]{\mathrm{e}_{#1}}

\newcommand{\matr}[4]{\left[\begin{array}{cc}
#1&#2\\
#3&#4\\
\end{array}
\right]}

\newcommand{\sym}[3]{\mathrm{Sym}^{#1}_{#2}(#3)}

\newcommand{\asymm}{\circ}
\newcommand{\asym}[3]{\mathrm{S}^{#1}_{#2}(#3)}
\newcommand{\rasym}[3]{\mathrm{RS}^{#1}_{#2}(#3)}
\newcommand{\rrasym}[3]{\widetilde{\rasym{#1}{#2}{#3}}}
\newcommand{\rsf}[2]{[#1,#2]}

\newcommand{\zhalf}[1]{{#1}\mbox{$[\frac{1}{2}]$}}
\newcommand{\zzhalf}[1]{{#1}'}




\newcommand{\res}[1]{\arrowvert_{#1}}

\newcommand{\cl}[1]{\mathrm{Cl}(#1)}
\newcommand{\cls}[2]{\mathrm{Cl}^{#1}(#2)}



\newcommand{\milk}[2]{K^{\mathrm{\small M}}_{#1}({#2})}
\newcommand{\kind}[1]{K^{\mathrm{\small ind}}_3(#1)}

\newcommand{\ho}[3]{\mathrm{H}_{#1}(#2,#3 )}

\newcommand{\Tor}[2]{\mathrm{Tor}^{\Z}_{1}(#1,#2)}
\newcommand{\covtor}[1]{\mathrm{Tor}^{\Z}_1\widetilde{(\mu_{#1},\mu_{#1})}}


\title{A refined Bloch group and the third homology of $\mathrm{SL}_2$ of a field}
\author{Kevin Hutchinson}
\address{School of Mathematical Sciences,
 University College Dublin}
\email{kevin.hutchinson@ucd.ie}
\date{\today}

\keywords{
$K$-theory, Group Homology
}
\subjclass{19G99, 20G10}

\includeonly{
higherwk-backgrd,
higherwk-picgps
}

\begin{document}
\bibliographystyle{plain}
\maketitle

\begin{abstract}
 We use the properties of the refined Bloch group to study the structure of $\ho{3}{\spl{2}{F}}{\Z}$ for a field $F$.
We compute this group up to $2$-torsion when $F$ is a local field with finite residue field of odd order, 
and we show that 
for any global field $F$ it is not finitely-generated.
\end{abstract}

\section{Introduction}\label{sec:intro}
In \cite{sah:discrete3}, Chih-Han Sah quotes S. Lichtenbaum who mentions our lack of knowledge of the 
precise structure of $\ho{3}{\spl{2}{\Q}}{\Z}$ as an example of the unsatisfactory state of our 
understanding of the homology of linear groups. This was nearly twenty five years ago, and to the author's knowledge 
the precise structure of this group is still unknown.  (Observe, however,
that $\ho{3}{\spl{n}{\Q}}{\Z}\cong\kind{\Q}\cong\Z/24$ for all $n\geq 3$ - 
by the results of \cite{hutchinson:tao2}, for example.) 

In this article, we study the structure of the third homology of $\mathrm{SL}_2$ of fields 
by using the properties of the \emph{refined Bloch group} of the field, which was introduced in \cite{hut:arxivcplx11}.
We are particularly interested in understanding $\ho{3}{\spl{2}{F}}{\Z}$ as a \emph{functor} of $F$, 
and its possible relation to other functors in $K$-theory and algebraic geometry.  
 
What are now referred to as \emph{Bloch groups} of fields first appeared in the work of S. Bloch in the late 1970s 
(see \cite{bloch:regulators} for the 
lecture notes) as a way of 
constructing explicit maps - and, in particular, regulators - on $K_3$ of fields. 
In the 1980s, they were studied by Dupont and Sah 
(under the name \emph{scissors congruence group}) because of their connection with $3$-dimensional hyperbolic geometry 
(\cite{sah:dupont}, \cite{sah:discrete3}).  This connection 
is still actively studied today: Bloch groups of number fields are targets for \emph{Bloch invariants} of 
certain finite-volume oriented hyperbolic $3$-manifolds 
(\cite{neumann:yang}, \cite{neumann:zagier}, \cite{zickert:goette}). These 
invariants are amenable to explicit calculation and are related in a known way to the Chern-Simons invariant. 
There are also intriguing connections between Bloch groups, conformal field theories and even modular form theory 
(\cite{zagier:dilog},\cite{nahm:bloch}).

The precise relationship between the Bloch group and $K$-theory was established via their mutual connection to the 
homology of linear groups. 
These connections were greatly clarified and exploited in the work of Suslin (\cite{sus:bloch}): 
For a field $F$ with at least $4$ elements, the pre-Bloch group, $\pb{F}$, of the field is an abelian group with
 generators 
$\gpb{x}$, $x\in F^\times\setminus\{ 1\}$, subject to a family of $5$-term relations. The Bloch group, 
$\bl{F}$, 
 is a subgroup of $\pb{F}$ which also arises naturally as a quotient of $\ho{3}{\gl{2}{F}}{\Z}$. Suslin has shown 
that this extends to a surjection from   $\ho{3}{\gl{3}{F}}{\Z}$ to $\bl{F}$ (\cite[section 3]{sus:bloch}). 
Of course, $K_3(F)$ admits a Hurewicz homomorphism to 
$\ho{3}{\gl{}{F}}{\Z}=\ho{3}{\gl{3}{F}}{\Z}$, and hence, by composition, a map to $\bl{F}$. Using calculations of the 
 homology of $\gl{n}{F}$ ($n=2,3$) as well as the homotopy theory of the plus construction, 
Suslin proved that there is a natural short 
exact sequence
\[
0\to \widetilde{\Tor{\mu_F}{\mu_F}}\to\kind{F}\to \bl{F}\to 0
\]
where $ \widetilde{\Tor{\mu_F}{\mu_F}}$ is the unique nontrivial extension of $\Tor{\mu_F}{\mu_F}$ by $\Z/2$ and 
\[
\kind{F}=\coker{K_3^M(F)\to K_3(F)}.
\]

These results of Suslin were extended to finite fields (with at least $4$ elements) in \cite[section 7]{hut:arxivcplx11}. 

In a letter to Sah (see \cite[section 4]{sah:discrete3}), Suslin asked the question whether the composite \\
$\ho{3}{\spl{2}{F}}{\Z}\to \ho{3}{\spl{}{F}}{\Z}\to\kind{F}$ 
induces an isomorphism 
\[
\ho{3}{\spl{2}{F}}{\Z}_{F^\times}\cong\kind{F}.
\]
The current state of knowledge on this question is that the map is surjective 
(\cite{hutchinson:tao2}) and, if we let $\zhalf{A}$ denote $A\otimes\zhalf{\Z}$ for any abelian group $A$,
that the induced map
\[
\ho{3}{\spl{2}{F}}{\zhalf{\Z}}_{F^\times}\to\zhalf{\kind{F}}
\]
is an isomorphism  (\cite{mirzaii:third}). 

The homology groups of the special linear groups $\spl{n}{F}$ are naturally modules over the 
group ring $\Z[F^\times]$ via 
the short exact sequences 
\[
\xymatrix{
1\ar[r]
&\spl{n}{F}\ar[r]
&\gl{n}{F}\ar[r]^-{\det}
&F^\times\ar[r]
&1.
}
\]
Since the scalar matrices $a\cdot I_n$ are central and have determinant $a^n$, it 
follows that $(F^\times)^n$ acts trivially 
on $\ho{k}{\spl{n}{F}}{\Z}$. In particular, the groups $\ho{k}{\spl{2}{F}}{\Z}$ are modules for the group ring 
$\sgr{F}:=\Z[F^\times/(F^\times)^2]$.

When $n>k$ (or $n\geq k$ when $k$ is odd) in the groups $\ho{k}{\spl{n}{F}}{\Z}$, 
we are in the range of stability (see \cite{sah:discrete3}, 
\cite{hutchinson:tao2} and \cite{hutchinson:tao3}) and 
this module structure is necessarily trivial. But below the range 
of stability, the module structure appears to be nontrivial and interesting. For example, the unstable groups 
$\ho{2n}{\spl{2n}{F}}{\Z}$ are modules over the Grothendieck-Witt ring of the field (which is a quotient of 
$\sgr{F}$) and surject onto the even Milnor-Witt $K$-theory groups of the field $F$ (\cite{hutchinson:tao3}).

In \cite{hut:arxivcplx11} the author defined a \emph{refined pre-Bloch group}, $\rpb{F}$, of a field $F$ and a subgroup, 
 the \emph{refined Bloch group}, $\rbl{F}$,
which was shown to have the following properties:  
\begin{enumerate}
\item\cite[Theorem 4.3]{hut:arxivcplx11}
 The group $\rbl{F}$ is an $\sgr{F}$-module and there is a natural surjective homomorphism of $\sgr{F}$-modules 
\[
\xymatrix{
\ho{3}{\spl{2}{F}}{\Z}\ar@{>>}[r]
&\rbl{F}
}
\]

\item 
This induces a commutative diagram (of $\zhalf{\sgr{F}}$-modules) with exact rows
\begin{eqnarray*}
\xymatrix{
0\ar[r]
&\zhalf{\Tor{\mu_F}{\mu_F}}\ar[r]\ar[d]^-{=}
&\ho{3}{\spl{2}{F}}{\zhalf{\Z}}\ar[r]\ar@{>>}[d]
&\zhalf{\rbl{F}}\ar[r]\ar@{>>}[d]
&0\\
0\ar[r]
&
\zhalf{\Tor{\mu_F}{\mu_F}}\ar[r]
&\zhalf{\kind{F}}\ar[r]
&\zhalf{\bl{F}}\ar[r]
&0
}
\end{eqnarray*}
Here $\sgr{F}$ acts trivially on the bottom row and furthermore
\item\cite[Corollary 5.1]{hut:arxivcplx11}
On taking $F^\times$-coinvariants, the top row becomes isomorphic to the bottom row. In particular,
$\zhalf{\rbl{F}}_{F^\times}\cong\zhalf{\bl{F}}$ and 
\[
\aug{F}\zhalf{\rbl{F}}\cong\aug{F}\ho{3}{\spl{2}{F}}{\zhalf{\Z}}=\ker{\ho{3}{\spl{2}{F}}{\zhalf{\Z}}\to\zhalf{\kind{F}}}
\]
where $\aug{F}$ denotes the augmentation ideal of the group ring $\sgr{F}$.

\item\cite[Lemma 5.2]{hut:arxivcplx11} 
The group $\aug{F}\zhalf{\rbl{F}}$ is also the kernel of the stabilization homomorphism 
\[
\ho{3}{\spl{2}{F}}{\zhalf{\Z}}\to\ho{3}{\spl{3}{F}}{\zhalf{\Z}}.
\]
 
The cokernel of this map is the third Milnor $K$-group $\zhalf{\milk{3}{F}}$ and the image is isomorphic 
to $\zhalf{\kind{F}}$ (\cite[Theorem 4.7, Section 5]{hutchinson:tao2}). 
\end{enumerate}

The main result of the current article (Theorem \ref{thm:spec} and its corollaries)
tells us that given a valuation $v$ on the field $F$ with residue field $k$, there 
are surjective reduction or specialization 
homomorphisms 
\[
\xymatrix{
\rpb{F}\ar@{>>}[r] 
&\rrrpb{k}
}
\]
where $\rrrpb{k}$ is a certain quotient of $\rpb{k}$.

In particular, if $a\in F^\times$ and $v(a)$ is not a multiple of $2$, there is a specialization homomorphism which induces 
a surjection
\[
\xymatrix{
\epm{a}\zhalf{\rbl{F}}\ar@{>>}[r]
&\zhalf{\redpb{k}}
}
\]
where 
\[
\epm{a}=\frac{1-\an{a}}{2}\in\zhalf{\aug{F}}
\]
and $\an{a}$ denotes the square class of $a$ in $\sgr{F}$ and $\redpb{k}$ is a certain quotient of the classical pre-Bloch 
group, $\pb{k}$, of the residue field.

Using these results, we can prove that $\aug{F}\zhalf{\rbl{F}}$ is large if $F$ is a field with many valuations. 
In particular, 
it follows that $\ho{3}{\spl{2}{F}}{\Z}$ is not finitely generated for any global field $F$. 
By contrast, if $F$ is a global 
field then $\ho{3}{\spl{3}{F}}{\Z}=\kind{F}$ is well-known to be finitely generated.
More precisely, if $F$ is a global field whose  class group has odd order then there is 
a natural surjection
\[
\xymatrix{
\aug{F}\zhalf{\rbl{F}}\ar@{>>}[r]
&
\bigoplus_{v}\zhalf{\pb{k_v}}
}
\] 
where $v$ runs through all the finite places of $F$ and $k_v$ is the residue field at $v$ (see Corollary 
\ref{exa:global}). Let $\zzhalf{n}$ denote the 
odd part of the nonzero integer $n$. By the results of 
\cite{hut:arxivcplx11}, if $\F{q}$ is the finite field with $q$ elements,  then $\zhalf{\pb{\F{q}}}$ is finite cyclic 
of order $\zzhalf{(q+1)}$.

As another application, 
we also use the techniques developed to construct explicitly  non-trivial $\F{3}$-vector spaces of known dimension 
inside groups of the form $\ho{3}{\spl{2}{\mathcal{O}_S}}{\Z}$ where $\mathcal{O}_S$ is a  ring of $S$-integers 
corresponding to a set $S$ of primes of the field $F$, 
and thus to give lower bounds on the $3$-ranks of such 
groups. Since there is still very little in the literature at present by way of explicit calculations of the 
homology or cohomology of $S$-arithmetic groups of this type, we hope that the techniques developed here will be 
useful in proving more general results of this type in the future. In particular, our results point to the importance
of the $\mathcal{O}_S^\times$-module structure in calculations of this type. 

Finally, we use these specialization maps, 
together with the basic algebraic properties of the refined Bloch groups, developed in section \ref{sec:rpf} below, 
to give a calculation of $\ho{3}{\spl{2}{F}}{\zhalf{\Z}}$ for local fields $F$ with finite residue field $k$
of odd order (Theorem \ref{thm:local}). In particular, for such fields (with some minor restrictions if $\Q_3\subset F$) 
we have 
\[
\ho{3}{\spl{2}{F}}{\zhalf{\Z}}\cong \zhalf{\kind{F}}\oplus \zhalf{\pb{k}}.
\] 
Here the right-hand side is an $\sgr{F}$ module with $F^\times$ acting trivially on the 
first factor while any uniformizer acts as $-1$ on the second factor.

To return to our opening remarks, it follows from the results here that there is a natural surjection
\[
\xymatrix{
\ho{3}{\spl{2}{\Q}}{{\Z}}\otimes\zhalf{\Z}=\ho{3}{\spl{2}{\Q}}{\zhalf{\Z}}\ar@{>>}[r]
&
\zhalf{\kind{\Q}}\oplus\left(\bigoplus_{p}\zhalf{\pb{\F{p}}}\right).
}
\]
It is natural to ask whether this is an isomorphism and, furthermore, what adjustments, if any, need to be made 
to obtain a corresponding statement with integral coefficients.

\textbf{Acknowledgements.} The author thanks the anonymous referee for a very thorough reading of the original manuscript 
and for numerous detailed suggestions which have  greatly improved the exposition. 
Any remaining obscurities are entirely the responsibility of the author. 

\section{Review of Bloch Groups}\label{sec:bloch}

\subsection{Preliminaries and Notation}

For a field $F$, we let $\sq{F}$ denote the multiplicative group, $F^\times/(F^\times)^2$, of nonzero 
square classes of the field.
For $x\in F^\times$, we will let $\an{x} \in \sq{F}$ denote the corresponding square class. 
Let $\sgr{F}$ denote the integral group ring $\igr{\sq{F}}$ of the group $\sq{F}$. 
We will use the notation $\pf{x}$ for the basis elements, $\an{x}-1$, 
of the augmentation ideal $\aug{F}$ of $\sgr{F}$.

For any $a\in F^\times$, we will let $\pp{a}$ and $\ppm{a}$ denote respectively the elements $1+\an{a}$ and $1-\an{a}$ 
in $\sgr{F}$.

We let $\ep{a}$ and $\epm{a}$ denote 
respectively the mutually orthogonal idempotents
\[
\ep{a}:=\frac{\pp{a}}{2}=\frac{1+\an{a}}{2},\quad \epm{a}:=\frac{\ppm{a}}{2}=\frac{1-\an{a}}{2}\in\zhalf{\sgr{F}}. 
\] 
(Of course, these operators depend only on the class of $a$ in $\sq{F}$.)
 
For any abelian group $A$ we will let $\zhalf{A}$ denote $A\otimes\zhalf{\Z}$. For an integer $n$, we will let 
$\zzhalf{n}$ denote the odd part of $n$. Thus if $A$ is a finite abelian group of order $n$, then $\zhalf{A}$ is 
a finite abelian group of order $\zzhalf{n}$. For an additive abelian group $A$, $\modtwo{A}$ denotes 
$A\otimes\modtwo{\Z}$. However, for multiplicative groups $M$ we will use the notation $M/M^2$. If $n$ is a positive 
integer and $A$ an abelian group, $\ptor{A}{n}$ will denote the set $\{ a\in A | na=0\}$.

\subsection{The classical Bloch group}

For a field $F$, with at least $4$ elements, the \emph{pre-Bloch group}, $\pb{F}$, is the group 
generated 
by the elements $\gpb{x}$, $x\in F^\times\setminus\{ 1\}$,  subject to the relations 
\[
R_{x,y}:\quad  \gpb{x}-\gpb{y}+\gpb{y/x}-\gpb{(1-x^{-1})/(1-y^{-1})}+\gpb{(1-x)/(1-y)} \quad x\not=y.
\] 

Let $\asym{2}{\Z}{F^\times}$ denote the group 
\[
\frac{F^\times\otimes_{\Z}F^\times}{<x\otimes y + y\otimes x | x,y \in F^\times>}
\]
and denote by $x\asymm y$ the image of $x\otimes y$ in $\asym{2}{\Z}{F^\times}$.

The map 
\[
\lambda:\pb{F}\to \asym{2}{\Z}{F^\times},\quad  [x]\mapsto \left(1-{x}\right)\asymm {x}
\]
is well-defined, and the \emph{Bloch group of $F$}, $\bl{F}\subset \pb{F}$, is defined to be the kernel of $\lambda$.

\subsection{The refined Bloch group}

The \emph{refined pre-Bloch group}, $\rpb{F}$, of a field $F$ which has at least $4$ elements,
 is the $\sgr{F}$-module with generators $\gpb{x}$, $x\in F^\times$ 
subject to the relations $\gpb{1}=0$ and 
\[
S_{x,y}:\quad 0=[x]-[y]+\an{x}\left[ y/x\right]-\an{x^{-1}-1}\left[(1-x^{-1})/(1-y^{-1})\right]
+\an{1-x}\left[(1-x)/(1-y)\right],\quad x,y\not= 1
\]

Of course, from the definition it follows immediately that $\pb{F}=(\rpb{F})_{F^\times}=\ho{0}{F^\times}{\rpb{F}}$.

For any field $F$ we define the $\sgr{F}$-module
\begin{eqnarray*}
\rasym{2}{\Z}{F^\times}:=& \aug{F}^2\times_{\sym{2}{\F{2}}{\sq{F}}}\asym{2}{\Z}{F^\times}\subset 
\aug{F}^2\oplus \asym{2}{\Z}{F^\times}
\end{eqnarray*}
where $\asym{2}{\Z}{F^\times}$ has the trivial $\sgr{F}$-module structure. 

As an $\sgr{F}$-module, $\rasym{2}{\Z}{F^\times}$ is generated by the elements 
\[
\rsf{a}{b}:= (\pf{a}\pf{b},a\asymm b)\in \rasym{2}{\Z}{F^\times}. 
\]

The \emph{refined Bloch-Wigner homomorphism} $\bw$ to be the $\sgr{F}$-module homomorphism
\[
\bw :\rpb{F}\to\rasym{2}{\Z}{F},\qquad \gpb{x}\mapsto \rsf{1-x}{x}
\]
which is well-defined by \cite[proof of Theorem 4.3]{hut:arxivcplx11}.

In view of the definition of $\rasym{2}{\Z}{F^\times}$, we can express $\bw= (\lambda_1,\lambda_2)$ where 
$\lambda_1:\rpb{F}\to \aug{F}^2$ is the map $\gpb{x}\mapsto \pf{1-x}\pf{x}$, and $\lambda_2$ is the composite
\[
\xymatrix{
\rpb{F}\ar@{>>}[r]
&\pb{F}\ar[r]^-{\lambda}
&\asym{2}{\Z}{F^\times}.
}
\] 

Finally, we can define the 
\emph{refined Bloch group} of the field $F$ (with at least $4$ elements) to be the $\sgr{F}$-module 
\[
\rbl{F}:=\ker{\bw: \rpb{F}\to \rasym{2}{\Z}{F^\times}}.
\]

\subsection{The fields $\F{2}$ and $\F{3}$}
Throughout this paper it will be convenient for us to have (refined and classical) pre-Bloch and Bloch groups 
for the fields 
with $2$ and $3$ elements. For this reason, we introduce the following \textit{ad hoc} definitions. 

$\pb{\F{2}}=\rpb{\F{2}}=\rbl{\F{2}}=\bl{\F{2}}$ 
is simply an additive group of order $3$ with distinguished generator, denoted $\bconst{\F{2}}$.

$\rpb{\F{3}}$ is the cyclic $\sgr{\F{3}}$-module generated by the symbol $\gpb{-1}$ and subject to the one relation
\[
0=2\cdot(\gpb{-1}+\an{-1}\gpb{-1}).
\] 
The homomorphism
\[
\bw:\rpb{\F{3}}\to\rasym{2}{\Z}{\F{3}^\times}=\aug{\F{3}}^2=2\cdot \Z\pf{-1}
\]
is the $\sgr{\F{3}}$-homomorphism sending $\gpb{-1}$ to $\pf{-1}^2=-2\pf{-1}$. 

Then $\rbl{\F{3}}=\ker{\bw}$ is the submodule of order $2$ generated by $\gpb{-1}+\an{-1}\gpb{-1}$.

Furthermore, we let $\pb{\F{3}}=\rpb{\F{3}}_{\F{3}^\times}$. This is a cyclic $\Z$-module of order $4$ with generator 
$\gpb{-1}$. Let $\lambda:\pb{\F{3}}\to \asym{2}{\Z}{\F{3}^\times}=\sym{2}{\F{2}}{\F{3}^\times}$ be the map 
$\gpb{-1}\mapsto -1\asymm -1$. Then 
$\bl{\F{3}}:=\ker{\lambda}$ is cyclc of order $2$ with generator $2\gpb{-1}$ and the natural map 
$\rbl{\F{3}}\to \bl{\F{3}}$ is an isomorphism.  

\subsection{The refined Bloch Group and $\ho{3}{\spl{2}{F}}{\Z}$}

We recall some results from \cite{hut:arxivcplx11}: The main result there is 

\begin{thm}\label{thm:main} Let $F$ be an infinite field.

There is a natural complex 
\[
0\to \Tor{\mu_F}{\mu_F}\to\ho{3}{\spl{2}{F}}{{\Z}}\to{\rbl{F}}\to 0.
\]
which is exact everywhere except possibly at the middle term. The middle homology is annihilated by $4$.

In particular, for any infinite field there is a natural short exact sequence
\[
0\to\zhalf{\Tor{\mu_F}{\mu_F}}\to\ho{3}{\spl{2}{F}}{\zhalf{\Z}}\to\zhalf{\rbl{F}}\to 0.
\]
\end{thm}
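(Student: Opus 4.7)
The plan is to adapt Suslin's approach from \cite{sus:bloch}, where the analogous statement is proved for $\gl{2}{F}$ and the classical Bloch group, keeping track throughout of the action by conjugation of the diagonal torus $T = F^\times$, and in particular not passing to $F^\times$-coinvariants at any stage. This extra equivariance is what upgrades the classical Bloch group to the refined version defined as an $\sgr{F}$-module.

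Concretely, first I would consider the chain complex $C_\bullet$ on ordered tuples of distinct points of $\projl{F}$, which is an acyclic resolution of $\Z$ by $\spl{2}{F}$-modules since $F$ is infinite. This yields a spectral sequence
\[
E^1_{p,q} = \bigoplus_{\text{orbits}} \ho{q}{\mathrm{Stab}_{\spl{2}{F}}(x_0,\ldots,x_p)}{\Z} \Longrightarrow \ho{p+q}{\spl{2}{F}}{\Z}.
\]
The stabilizers are a Borel subgroup for $p=0$, the diagonal torus $T$ for $p=1$, and $\{\pm I\}$ for $p \geq 2$. Orbits of $4$-tuples in general position are parametrized by their cross-ratio in $F^\times \setminus \{1\}$, so $E^1_{3,0} \cong \Z[F^\times \setminus \{1\}]$, and the differential $d^1: E^1_{4,0} \to E^1_{3,0}$ encodes the five-point relation $S_{x,y}$. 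Tracking the $\sgr{F}$-action induced by conjugation by $\mathrm{diag}(a,1) \in \gl{2}{F}$ --- well-defined on $\spl{2}{F}$-homology because the scalar centre $\{\pm I\}$ acts trivially --- the resulting $\sgr{F}$-module quotient of $E^1_{3,0}$ is exactly $\rpb{F}$. Composing the projection of $\ho{3}{\spl{2}{F}}{\Z}$ onto its top filtration quotient with the surjection to $\rpb{F}$ gives the map appearing in the statement, and one then checks that the image lies in $\rbl{F} = \ker{\bw}$: the classical component $\lambda_2$ of $\bw$ vanishes by Suslin's original argument, while the refined component $\lambda_1 : \rpb{F} \to \aug{F}^2$ vanishes by compatibility with the $d^2$-differential from the $p=3$ column into the $p=1$ column, whose target $\ho{2}{T}{\Z}$ is controlled by $\aug{F}^2$ via Künneth.

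The kernel of $\ho{3}{\spl{2}{F}}{\Z} \to \rbl{F}$ is then identified, up to $4$-torsion, with the image of $\ho{3}{N(T)}{\Z} \to \ho{3}{\spl{2}{F}}{\Z}$ from the normalizer of the torus. The Hochschild--Serre spectral sequence for $1 \to T \to N(T) \to W \to 1$, with $W = \Z/2$ acting on $T = F^\times$ by inversion, reduces this to $\ho{3}{T}{\Z}^W$, and a direct Künneth calculation for $\ho{*}{F^\times}{\Z}$ identifies the relevant $W$-invariant piece in degree $3$ as $\Tor{\mu_F}{\mu_F}$ (the Tor of the torsion subgroups). The hardest step --- and the source of the possibly non-exact middle term, annihilated by $4$ --- is the careful analysis of $2$-primary contributions arising from $\ho{1}{W}{\ho{q}{T}{\Z}}$ in the Weyl-group spectral sequence and from subtle comparisons between $\ho{*}{N(T)}{\Z}$ and $\ho{*}{\spl{2}{F}}{\Z}$ in low degrees. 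Inverting $2$ kills all such $2$-torsion ambiguities at once, yielding the clean short exact sequence in the second statement --- which is the form actually needed for the applications in the rest of the paper.
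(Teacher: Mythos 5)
Note first that the paper does not actually prove Theorem \ref{thm:main}: it is recalled verbatim from \cite{hut:arxivcplx11}, so there is no in-document proof to compare against. Judged on its own terms, your sketch has a genuine gap at the crucial step, and it is precisely the step where the ``refined'' structure must arise.

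Your identification $E^1_{3,0}\cong\Z[F^\times\setminus\{1\}]$ is the $\gl{2}{F}$ (or $\pgl{2}{F}$) answer, not the $\spl{2}{F}$ answer, and the slip is fatal to the construction. Unlike $\pgl{2}{F}$, which acts simply transitively on ordered triples of distinct points of $\projl{F}$, the group $\pspl{2}{F}$ has $\card{\sq{F}}$ orbits on such triples, since $\pgl{2}{F}/\pspl{2}{F}\cong\sq{F}$. Consequently the $\spl{2}{F}$-orbits of ordered $4$-tuples of distinct points are indexed by $\sq{F}\times\left(F^\times\setminus\{1\}\right)$, not by cross-ratio alone, so that $E^1_{2,0}\cong\sgr{F}$ and $E^1_{3,0}$ is the \emph{free} $\sgr{F}$-module on $F^\times\setminus\{1\}$. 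It is the $d^1$-differential on this free $\sgr{F}$-module, with the square-class coefficients already present, that produces exactly the relation $S_{x,y}$ defining $\rpb{F}$. Your alternative---keep $E^1_{3,0}=\Z[F^\times\setminus\{1\}]$ and extract the $\sgr{F}$-structure from conjugation by $\mathrm{diag}(a,1)$---cannot work: such conjugations act on $\projl{F}$ by M\"obius transformations and therefore preserve cross-ratios, so the induced action on $\Z[F^\times\setminus\{1\}]$ is \emph{trivial}. A quotient of a trivial $\sgr{F}$-module is trivial, so you would recover only the classical $\pb{F}$ with trivial action, not $\rpb{F}$. The refinement comes from the finer $\spl{2}{F}$-orbit count, not from a conjugation twist on the coarse orbit set. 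The later steps you outline (the image lands in $\rbl{F}$; $\ho{3}{N(T)}{\Z}$ furnishes $\Tor{\mu_F}{\mu_F}$ together with the $4$-torsion ambiguity) are plausible and reflect the broad shape of the cited argument, but they need to be rebuilt on this corrected foundation.
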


The following result is Corollary 5.1 in \cite{hut:arxivcplx11}:

\begin{lem}
\label{lem:blochinf}
Let $F$ be an infinite field. Then the natural map $\rbl{F}\to\bl{F}$ is surjective and the induced map 
$\rbl{F}_{F^\times}\to \bl{F}$ has a $2$-primary torsion kernel.
\end{lem}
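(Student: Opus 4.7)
The plan is to establish the two assertions separately, combining a snake lemma argument with Theorem \ref{thm:main} and the results of Suslin and Mirzaii cited in the introduction. Part I (surjectivity) is the main technical point; Part II is then a short consequence.

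For Part I, I would apply the snake lemma to the commutative diagram with exact rows
\[
\begin{CD}
0 @>>> \rbl{F} @>>> \rpb{F} @>{\bw}>> \image{\bw} @>>> 0 \\
@. @VVV @VVV @VV{p_2}V @. \\
0 @>>> \bl{F} @>>> \pb{F} @>{\lambda}>> \image{\lambda} @>>> 0,
\end{CD}
\]
in which the middle vertical is the natural surjection (with kernel $\aug{F}\rpb{F}$) and the right vertical is projection onto the second factor of the pullback $\rasym{2}{\Z}{F^\times}$. Its kernel is $\{(\lambda_1(z),0):z\in\ker{\lambda_2}\}$, and the pullback defining $\rasym{2}{\Z}{F^\times}$ forces the first coordinate to lie in $\aug{F}^3=\ker(\aug{F}^2\twoheadrightarrow\sym{2}{\F{2}}{\sq{F}})$. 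The snake lemma then gives
\[
\coker{\rbl{F}\to\bl{F}}\;\cong\;\lambda_1(\ker{\lambda_2})\big/\aug{F}\lambda_1(\rpb{F}),
\]
so surjectivity reduces to the vanishing of this quotient. Using the identity $\pf{a}^2=-2\pf{a}$ (a consequence of $\an{a}^2=1$), the element $-\pf{1-x}[x]\in\aug{F}\rpb{F}$ has $\lambda_1$-image $2\pf{1-x}\pf{x}$; iterating and manipulating the refined relations $S_{x,y}$ produces elements of $\aug{F}\rpb{F}$ realizing any $\lambda_1(z)$ with $\lambda_2(z)=0$. The infiniteness of $F$ enters here via the existence of sufficiently many rational points on the conics $as^2+bt^2=1$, which supply pairs $(\an{x},\an{1-x})$ covering all square classes needed to fill out $\aug{F}\lambda_1(\rpb{F})$.

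For Part II, inverting $2$ in Theorem \ref{thm:main} produces a short exact sequence whose leftmost term $\zhalf{\Tor{\mu_F}{\mu_F}}$ carries trivial $F^\times$-action. Taking $F^\times$-coinvariants and invoking Mirzaii's theorem (from the introduction) to identify $\ho{3}{\spl{2}{F}}{\zhalf{\Z}}_{F^\times}$ with $\zhalf{\kind{F}}$ yields
\[
\zhalf{\Tor{\mu_F}{\mu_F}}\to\zhalf{\kind{F}}\to\zhalf{\rbl{F}}_{F^\times}\to 0.
\]
By Suslin's exact sequence (also from the introduction), the left arrow is injective with cokernel $\zhalf{\bl{F}}$, so $\zhalf{\rbl{F}}_{F^\times}\cong\zhalf{\bl{F}}$. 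Combined with Part I, the map $\rbl{F}_{F^\times}\to\bl{F}$ is surjective and becomes an isomorphism after inverting $2$; hence its kernel is $2$-primary torsion.

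The main obstacle is the explicit lifting computation in Part I, demanding a careful interplay between the identity $\pf{a}^2=-2\pf{a}$, the refined relations $S_{x,y}$, and the availability of enough realizations $(\an{x},\an{1-x})$ of pairs of square classes. The infiniteness hypothesis on $F$ is essential for this last point; without it, conics may fail to have the needed rational points and the quotient computed by the snake lemma need not vanish.
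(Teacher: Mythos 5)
Your Part II is essentially sound in outline: inverting $2$ in Theorem \ref{thm:main}, taking $F^\times$-coinvariants, and comparing with the Mirzaii/Suslin identifications does show that $\zhalf{\rbl{F}}_{F^\times}\to\zhalf{\bl{F}}$ is an isomorphism (modulo verifying compatibility of the various identifications with the natural map $\rbl{F}\to\bl{F}$, which you take on faith). That establishes the $2$-primary kernel statement \emph{once surjectivity is known}, but note that it cannot substitute for Part I: knowing $\zhalf{\rbl{F}}_{F^\times}\cong\zhalf{\bl{F}}$ only bounds the cokernel of $\rbl{F}\to\bl{F}$ to be a $2$-primary group, not to vanish.

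The genuine gap is in Part I. Your snake-lemma reduction to $\coker(\rbl{F}\to\bl{F})\cong\lambda_1(\ker\lambda_2)/\aug{F}\lambda_1(\rpb{F})$ is formally correct, but it does not simplify the problem: that quotient \emph{is} the cokernel, so proving it vanishes is exactly proving surjectivity. At that point you would need a real argument, and neither of your two supporting observations supplies one. The computation $\lambda_1(-\pf{1-x}[x])=2\pf{1-x}\pf{x}$ only exhibits one element of $\aug{F}\lambda_1(\rpb{F})$ (and trivially so, since $\lambda_1$ is an $\sgr{F}$-module map and $-\pf{1-x}\in\aug{F}$); it says nothing about an arbitrary $\lambda_1(z)$ for $z\in\ker\lambda_2$. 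The appeal to ``rational points on the conics $as^2+bt^2=1$ covering all pairs $(\an{x},\an{1-x})$'' is moreover false for general infinite fields: for $F=\R$ one can never have $\an{x}=\an{1-x}=\an{-1}$, since $x$ and $1-x$ cannot both be negative. So there is no content behind the assertion that ``iterating and manipulating the refined relations $S_{x,y}$ produces elements of $\aug{F}\rpb{F}$ realizing any $\lambda_1(z)$ with $\lambda_2(z)=0$.'' The paper itself does not attempt an algebraic proof of this kind; it cites the result as Corollary~5.1 of the earlier paper \cite{hut:arxivcplx11}, where (as with Suslin's analogous results for the classical Bloch group) the surjectivity is obtained from the homological machinery underlying Theorem \ref{thm:main} — spectral sequences of the $\spl{2}{F}$ action on configuration spaces — not from a direct manipulation of the symbol relations. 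Without that input, your argument does not close.
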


Now for any field $F$, let 
\begin{eqnarray*}
\ho{3}{\spl{2}{F}}{\Z}_0:=\ker{\ho{3}{\spl{2}{F}}{\Z}\to\kind{F}}
\end{eqnarray*}
and
\begin{eqnarray*}
\rblker{F}:=\ker{\rbl{F}\to\bl{F}}
\end{eqnarray*}

\begin{lem}\cite[Lemma 5.2]{hut:arxivcplx11} \label{lem:h3sl20}
Let $F$ be an infinite field. Then
\begin{enumerate}
\item $\ho{3}{\spl{2}{F}}{\zhalf{\Z}}_0=\zhalf{\rbl{F}}_0$
\item $\ho{3}{\spl{2}{F}}{\zhalf{\Z}}_0=\aug{F}\ho{3}{\spl{2}{F}}{\zhalf{\Z}}$ and
$\zhalf{\rblker{F}}=\aug{F}\zhalf{\rbl{F}}$.
\item 
$\ho{3}{\spl{2}{F}}{\zhalf{\Z}}_0 = \ker{\ho{3}{\spl{2}{F}}{\zhalf{\Z}}\to \ho{3}{\spl{3}{F}}{\zhalf{\Z}}}\\
= \ker{\ho{3}{\spl{2}{F}}{\zhalf{\Z}}\to \ho{3}{\gl{2}{F}}{\zhalf{\Z}}}$.
\end{enumerate}
\end{lem}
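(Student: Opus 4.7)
The plan is to derive all three parts formally from the already-established ingredients: Theorem~\ref{thm:main}, the commutative diagram in property~(2) of the introduction, Lemma~\ref{lem:blochinf}, Mirzaii's isomorphism $\ho{3}{\spl{2}{F}}{\zhalf{\Z}}_{F^\times}\cong\zhalf{\kind{F}}$, and the surjectivity of $\ho{3}{\spl{2}{F}}{\Z}\twoheadrightarrow\kind{F}$. Part~(1) then follows by applying the snake lemma to the commutative diagram of short exact sequences in property~(2): the left vertical is the identity and the right vertical $\zhalf{\rbl{F}}\twoheadrightarrow\zhalf{\bl{F}}$ is surjective by Lemma~\ref{lem:blochinf}, so the kernel of the middle vertical, namely $\ho{3}{\spl{2}{F}}{\zhalf{\Z}}_0$, is identified with the kernel of the right vertical, namely $\zhalf{\rbl{F}}_0$.

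For part~(2), the inclusions $\aug{F}\ho{3}{\spl{2}{F}}{\zhalf{\Z}}\subseteq\ho{3}{\spl{2}{F}}{\zhalf{\Z}}_0$ and $\aug{F}\zhalf{\rbl{F}}\subseteq\zhalf{\rbl{F}}_0$ are automatic, since $\sgr{F}$ acts trivially on the stable-range targets $\kind{F}$ and $\bl{F}$ and so $\aug{F}$ annihilates them. For the reverse inclusions, I would observe that the two quotients $\ho{3}{\spl{2}{F}}{\zhalf{\Z}}_{F^\times}$ and $\ho{3}{\spl{2}{F}}{\zhalf{\Z}}/\ho{3}{\spl{2}{F}}{\zhalf{\Z}}_0$ are both canonically isomorphic to $\zhalf{\kind{F}}$ (the first by Mirzaii, the second by the definition of $\ho{3}{\spl{2}{F}}{\zhalf{\Z}}_0$ combined with surjectivity of the map to $\zhalf{\kind{F}}$), with the natural surjection of the first onto the second compatible with both projections. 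This forces the surjection to be an isomorphism and hence the two submodules to coincide. The refined Bloch group version is identical, using Lemma~\ref{lem:blochinf} to identify $\zhalf{\rbl{F}}_{F^\times}\cong\zhalf{\bl{F}}$.

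For part~(3), the introduction already records that the stabilization $\ho{3}{\spl{2}{F}}{\zhalf{\Z}}\to\ho{3}{\spl{3}{F}}{\zhalf{\Z}}$ has image $\zhalf{\kind{F}}$, so its kernel is $\ho{3}{\spl{2}{F}}{\zhalf{\Z}}_0$ by definition. For the $\gl{2}{F}$ kernel I would use that $\spl{2}{F}\hookrightarrow\spl{3}{F}$ factors through $\gl{2}{F}\hookrightarrow\spl{3}{F}$ via the block embedding $A\mapsto\mathrm{diag}(A,\det(A)^{-1})$, yielding a factorization $\ho{3}{\spl{2}{F}}{\zhalf{\Z}}\to\ho{3}{\gl{2}{F}}{\zhalf{\Z}}\to\ho{3}{\spl{3}{F}}{\zhalf{\Z}}$ of the stabilization. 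The first arrow factors through $\ho{3}{\spl{2}{F}}{\zhalf{\Z}}_{F^\times}$, since the $F^\times$-action on the source is by conjugation by lifts in $\gl{2}{F}$, which act trivially on the target. The composite $\ho{3}{\spl{2}{F}}{\zhalf{\Z}}_{F^\times}\cong\zhalf{\kind{F}}\to\ho{3}{\gl{2}{F}}{\zhalf{\Z}}\to\zhalf{\kind{F}}$ is then the identity, forcing the middle arrow to be injective on $\ho{3}{\spl{2}{F}}{\zhalf{\Z}}_{F^\times}$; hence $\ker{\ho{3}{\spl{2}{F}}{\zhalf{\Z}}\to\ho{3}{\gl{2}{F}}{\zhalf{\Z}}}=\aug{F}\ho{3}{\spl{2}{F}}{\zhalf{\Z}}=\ho{3}{\spl{2}{F}}{\zhalf{\Z}}_0$ by part~(2).

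The main obstacle, I expect, is this last injectivity: one must combine Mirzaii's theorem with the block embedding into $\spl{3}{F}$ and verify that the induced self-map of $\zhalf{\kind{F}}$ is indeed the identity. Everything else reduces to the snake lemma and bookkeeping with coinvariant quotients.
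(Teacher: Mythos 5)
Parts (1) and (2) are argued correctly: the snake lemma on the diagram of property (2), plus the observation that the factorization $\ho{3}{\spl{2}{F}}{\zhalf{\Z}}\twoheadrightarrow\ho{3}{\spl{2}{F}}{\zhalf{\Z}}_{F^\times}\xrightarrow{\ \sim\ }\zhalf{\kind{F}}$ identifies $\ho{3}{\spl{2}{F}}{\zhalf{\Z}}_0$ with $\aug{F}\ho{3}{\spl{2}{F}}{\zhalf{\Z}}$, is exactly what is needed, and the refined Bloch group case goes through identically using Lemma \ref{lem:blochinf}. Your argument for the $\gl{2}{F}$ half of part (3) is also sound: the composite to $\zhalf{\kind{F}}$ factors through $F^\times$-coinvariants and \emph{is} Mirzaii's isomorphism, forcing injectivity of $\ho{3}{\spl{2}{F}}{\zhalf{\Z}}_{F^\times}\to\ho{3}{\gl{2}{F}}{\zhalf{\Z}}$. (Note that the paper does not reprove this lemma; it cites Lemma 5.2 of \cite{hut:arxivcplx11}, so there is no in-paper proof to compare against.)

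The gap is in the $\spl{3}{F}$ half of (3). Appealing to the parenthetical remark in the introduction is circular: that remark is a restatement of parts (2) and (3) of this very lemma, recorded there as a consequence of it together with Mirzaii's theorem. Even granting it, an \emph{abstract} isomorphism of the image with $\zhalf{\kind{F}}$ would not by itself identify the kernel with $\ho{3}{\spl{2}{F}}{\zhalf{\Z}}_0$; one needs the induced map onto the image to agree with Suslin's map. The missing step is this: the standard stabilization $\gl{2}{F}\to\gl{3}{F}$ factors as $\gl{2}{F}\hookrightarrow\spl{3}{F}\hookrightarrow\gl{3}{F}$, and Suslin's map $\ho{3}{\gl{n}{F}}{\zhalf{\Z}}\to\zhalf{\kind{F}}$ is compatible with stabilization; hence the composite $\ho{3}{\spl{2}{F}}{\zhalf{\Z}}\to\zhalf{\kind{F}}$ defining $\ho{3}{\spl{2}{F}}{\zhalf{\Z}}_0$ factors through $\ho{3}{\spl{3}{F}}{\zhalf{\Z}}$. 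This gives $\ker{\ho{3}{\spl{2}{F}}{\zhalf{\Z}}\to\ho{3}{\spl{3}{F}}{\zhalf{\Z}}}\subseteq\ho{3}{\spl{2}{F}}{\zhalf{\Z}}_0$, and the reverse inclusion you already have from the factorization through coinvariants.
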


On the other hand, the corresponding results for finite fields are as follows (the results in \cite{hut:arxivcplx11} 
apply to fields with at least $4$ elements, but it is straightforward to verify that they extend to the fields 
$\F{2}$ and $\F{3}$ with the definitions supplied above): 

\begin{lem}\cite[Lemma 7.1]{hut:arxivcplx11}. 
\label{lem:blochfin}
For a finite field $k$ the natural map $\rpb{k}\to\pb{k}$ induces an isomorphism $\rbl{k}\cong\bl{k}$.
\end{lem}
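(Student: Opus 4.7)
The plan is to treat the finite field $k$ case by case according to its characteristic and size, since both the structure of $\sq{k}$ and the relations defining $\rpb{k}$ change significantly between regimes.

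If $\mathrm{char}(k)=2$ and $|k|\geq 4$, every element of $k^\times$ is a square, so $\sq{k}=1$, $\aug{k}=0$, and $\sgr{k}=\Z$ acts trivially on $\rpb{k}$. The relations $S_{x,y}$ then collapse to $R_{x,y}$ (all square-class coefficients become $1$), and the first factor $\aug{k}^2$ in the codomain of $\bw$ vanishes. Thus $\rpb{k}=\pb{k}$ and $\bw=\lambda$, giving the claim immediately. For $k=\F{2}$ the four groups $\pb{\F{2}}$, $\rpb{\F{2}}$, $\bl{\F{2}}$, $\rbl{\F{2}}$ are all declared equal to $\Z/3$, so the claim is tautological. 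For $\F{3}$, one checks directly from the ad hoc definitions that the natural projection $\rpb{\F{3}}\to\pb{\F{3}}$ sends the generator $\gpb{-1}+\an{-1}\gpb{-1}$ of $\rbl{\F{3}}\cong\Z/2$ to $2\gpb{-1}$, which generates the order-$2$ subgroup $\bl{\F{3}}\subset\pb{\F{3}}\cong\Z/4$.

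For $k$ of odd characteristic with $q=|k|\geq 5$, we have $\sq{k}\cong\Z/2$, and this is the main case. The strategy is to establish both surjectivity and injectivity of the natural map $\rbl{k}\to\bl{k}$. For surjectivity, I would use that $\bl{\F{q}}$ is cyclic of order $(q+1)/2$ (from the computations of \cite{hut:arxivcplx11}) and exhibit a generator of the form $\gpb{x_0}$ with $x_0$ chosen so that at least one of $x_0$, $1-x_0$ is a square in $k$; this forces the first component $\pf{1-x_0}\pf{x_0}$ of $\bw(\gpb{x_0})$ to vanish, while the second component vanishes because $\gpb{x_0}\in\bl{k}$, so $\gpb{x_0}\in\rbl{k}$. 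A counting argument (non-squares comprise at most half of $k^\times\setminus\{1\}$) ensures such $x_0$ exist. For injectivity, I would bound $|\rbl{\F{q}}|$ from above by $(q+1)/2$, working from the explicit finite presentation of $\rpb{\F{q}}$.

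The hardest step is this injectivity bound. The difficulty is that $\rpb{\F{q}}$ is finitely presented as an $\sgr{k}$-module but is not manifestly finite, and one must carefully track the interaction of the $\aug{k}$-action with the kernel of $\bw$. The key intermediate claim is that $\aug{k}\rbl{k}=0$, so that the $\sq{k}$-action on $\rbl{k}$ is trivial and the coinvariants map $\rbl{k}\to(\rbl{k})_{k^\times}\hookrightarrow\bl{k}$ is injective. Establishing this requires a detailed manipulation of the $S_{x,y}$ relations, reducing the $\an{\epsilon}$-twisted terms to untwisted ones modulo the image of $\bw$; this is precisely the analysis carried out in \cite{hut:arxivcplx11} for $|k|\geq 4$, and it extends to $\F{2},\F{3}$ in a straightforward manner with the ad hoc definitions supplied above.
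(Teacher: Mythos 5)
The paper does not actually prove this lemma: it simply cites Lemma~7.1 of \cite{hut:arxivcplx11}, adding the one-line remark that the $\F{2}$ and $\F{3}$ cases are handled by the \textit{ad hoc} definitions. Your case split (characteristic $2$, $\F{2}$, $\F{3}$, odd $q\geq 5$) is therefore a genuinely different, more explicit attempt, and the first three cases are handled correctly: in characteristic $2$ every unit is a square so $\sgr{k}=\Z$, $\aug{k}=0$, and $\bw=\lambda$ on the nose; for $\F{2}$ and $\F{3}$ the statement is built into the definitions.

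The odd characteristic case with $q\geq 5$, however, has two real gaps. For surjectivity you need a \emph{generator} of $\bl{\F{q}}\cong\Z/\frac{q+1}{2}$ which is a single symbol $\gpb{x_0}$, but no argument is given for the existence of such a generator. Your counting argument only produces $x_0\in\F{q}$ with at least one of $x_0,1-x_0$ a square, equivalently with $\gpb{x_0}\in\bl{\F{q}}$; it says nothing about the order of $\gpb{x_0}$ in the cyclic group $\bl{\F{q}}$. (The cyclicity of $\bl{\F{q}}$ and the value of its order are themselves established in \cite{hut:arxivcplx11} by homological means, not by an explicit single-symbol generator.) For injectivity, even granting the key claim $\aug{k}\rbl{k}=0$, the inferred injection $(\rbl{k})_{k^\times}\hookrightarrow\bl{k}$ is not formal: taking $k^\times$-coinvariants of the inclusion $\rbl{k}\hookrightarrow\rpb{k}$ is right exact but not left exact, so the induced map $\rbl{k}=(\rbl{k})_{k^\times}\to(\rpb{k})_{k^\times}=\pb{k}$ may \emph{a priori} have a kernel; what you actually need is $\rbl{k}\cap\aug{k}\rpb{k}=0$, which is a strictly stronger statement than $\aug{k}\rbl{k}=0$. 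Finally, the claim $\aug{k}\rbl{k}=0$ is itself not established; you remark that ``this is precisely the analysis carried out in \cite{hut:arxivcplx11}.'' At that point the proposal has collapsed back into a citation of the same reference the paper uses, but with the additional unproven assertions above layered on top. If you want a self-contained proof, the $S_{x,y}$-relation bookkeeping that yields $\aug{k}\rbl{k}=0$ and $\rbl{k}\cap\aug{k}\rpb{k}=0$ must actually be carried out, and the surjectivity step must be replaced by something that does not presume a single-symbol generator.
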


For a field $F$, we let $\covtor{F}$ denote 
the unique nontrivial extension of $\Tor{\mu_F}{\mu_F}$ by $\Z/2$ if the characteristic 
of $F$ is not $2$, and   $\Tor{\mu_F}{\mu_F}$ in characteristic $2$.

\begin{thm}\cite[Corollary 7.5]{hut:arxivcplx11} \label{thm:blochfinite} 
There is a natural short exact sequence
\[
0\to\covtor{\F{q}}\to\ho{3}{\spl{2}{\F{q}}}{\Z[1/p]}\to\bl{\F{q}}\to 0
\]
for any finite field $\F{q}$ of order $q=p^f$.

Furthermore, there is a natural isomorphism 
\[
\ho{3}{\spl{2}{\F{q}}}{\Z[1/p]}\cong\kind{\F{q}}.
\]
\end{thm}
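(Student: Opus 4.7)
The plan is to deduce the statement from the results of \cite{hut:arxivcplx11} applied to the finite-field setting, noting that while those results were stated for fields with at least four elements, the constructions extend to $\F{2}$ and $\F{3}$ using the \emph{ad hoc} definitions of $\pb{\F{q}}$, $\rpb{\F{q}}$ and their Bloch subgroups given above.

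For $q\geq 4$, I would follow the spectral-sequence argument of \cite{hut:arxivcplx11} --- the action of $\spl{2}{\F{q}}$ on the chain complex associated to $\projl{\F{q}}$ --- to produce a natural complex
\[
0\to \Tor{\mu_{\F{q}}}{\mu_{\F{q}}}\to \ho{3}{\spl{2}{\F{q}}}{\Z[1/p]}\to \rbl{\F{q}}\to 0
\]
exact except possibly at the middle, with middle homology annihilated by $4$. Inverting the characteristic is needed to suppress the contribution of the unipotent stabilizers of points of $\projl{\F{q}}$, whose order is a power of $p$; the analogous complex for infinite fields (Theorem \ref{thm:main}) does not require this adjustment. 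Lemma \ref{lem:blochfin} then identifies $\rbl{\F{q}}$ with $\bl{\F{q}}$. A direct order comparison using $|\bl{\F{q}}|\in\{(q+1)/2,q+1\}$ and $|\kind{\F{q}}|=q^2-1$ (Quillen) forces the middle homology to be $\Z/2$ in odd characteristic and trivial in characteristic two, and the nontriviality of the resulting extension in odd characteristic can be detected on a Sylow $2$-subgroup of $\spl{2}{\F{q}}$, which is generalized quaternion. This yields the term $\covtor{\F{q}}$ and the first assertion.

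The isomorphism $\ho{3}{\spl{2}{\F{q}}}{\Z[1/p]}\cong \kind{\F{q}}$ would then follow by comparing the above with Suslin's short exact sequence
\[
0\to\covtor{\F{q}}\to\kind{\F{q}}\to\bl{\F{q}}\to 0
\]
via naturality of the stabilization $\ho{3}{\spl{2}{\F{q}}}{\Z[1/p]}\to \ho{3}{\gl{}{\F{q}}}{\Z[1/p]}$, using that $\milk{3}{\F{q}}=0$ because $\F{q}^\times$ is cyclic (so already $\milk{2}{\F{q}}=0$ by the Steinberg relation). Comparing orders then forces the vertical map of middle terms to be an isomorphism.

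For the small cases, $\spl{2}{\F{2}}\cong S_3$ gives $\ho{3}{S_3}{\Z[1/2]}\cong\Z/3\cong\bl{\F{2}}$ with $\covtor{\F{2}}=0$; $\spl{2}{\F{3}}$ is the binary tetrahedral group of order $24$ whose Sylow $2$-subgroup is $Q_8$, so $\ho{3}{\spl{2}{\F{3}}}{\Z[1/3]}\cong \ho{3}{Q_8}{\Z}\cong\Z/8$, matching $\kind{\F{3}}=\Z/8$ and giving the exact sequence $0\to\Z/4\to\Z/8\to\Z/2\to 0$ with the ad hoc $\bl{\F{3}}\cong\Z/2$ and $\covtor{\F{3}}\cong\Z/4$. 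The principal technical obstacle, already the main work of \cite{hut:arxivcplx11}, is verifying convergence and the $E_2$-vanishing pattern of the spectral sequence in the finite-field setting; the infinite-field arguments there often exploit the ability to avoid finitely many parameter values, and this freedom must be replaced by an explicit analysis of the stabilizer spectral sequence after inverting $p$.
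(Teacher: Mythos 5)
The paper does not prove this statement at all: it is stated as a direct citation of Corollary~7.5 of \cite{hut:arxivcplx11}. So there is no in-text proof to compare against, and your task is really to reconstruct the argument of that reference. Your sketch is in the right spirit --- the group action on $\projl{\F{q}}$, inverting $p$ to kill the unipotent stabilizer contributions, identifying $\rbl{\F{q}}$ with $\bl{\F{q}}$ via Lemma~\ref{lem:blochfin}, detecting the extension class on a quaternion Sylow $2$-subgroup, and handling $\F{2},\F{3}$ by hand. Your small-case computations ($\spl{2}{\F{2}}\cong S_3$, $\spl{2}{\F{3}}$ binary tetrahedral) are correct, as is the order bookkeeping $|\covtor{\F{q}}|\cdot|\bl{\F{q}}|=q^2-1$.

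However, there is a genuine gap in the order-comparison step as you have written it. To conclude that the middle homology of the complex
\[
0\to \Tor{\mu_{\F{q}}}{\mu_{\F{q}}}\to \ho{3}{\spl{2}{\F{q}}}{\Z[1/p]}\to \rbl{\F{q}}\to 0
\]
is exactly $\Z/2$ in odd characteristic, you need to know the order of the middle \emph{term}, not just the orders of the two ends. Quoting $|\kind{\F{q}}|=q^2-1$ does not by itself give $|\ho{3}{\spl{2}{\F{q}}}{\Z[1/p]}|$: that equality is precisely the second assertion of the theorem, so using it here is circular. From the complex alone one only gets
\[
q^2-1 \;\leq\; |\ho{3}{\spl{2}{\F{q}}}{\Z[1/p]}| \;\leq\; 2(q^2-1),
\]
the lower bound coming from the surjectivity of stabilization onto $\kind{\F{q}}$ and the upper bound from ``middle homology annihilated by $4$.'' This leaves two possibilities for the middle homology ($\Z/2$ versus an order-$4$ group), and the ambiguity is not resolved by your argument as stated. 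You need an independent determination of $|\ho{3}{\spl{2}{\F{q}}}{\Z[1/p]}|$ --- for instance, via Sylow theory (quaternion at $2$, cyclic at odd $\ell\neq p$, with the classical fusion analysis) or by reading off the $E^{\infty}$-page of the spectral sequence more precisely than ``killed by $4$.'' For a finite field this is entirely doable and is what the cited reference must ultimately supply, but it is not a corollary of the Quillen computation of $K_3$ plus the abstract form of the complex.

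A secondary remark: you write ``Suslin's short exact sequence'' for $\F{q}$, but Suslin's result is for infinite fields; the finite-field version of that sequence was itself established in \cite{hut:arxivcplx11} (as the paper's introduction notes). So the second half of your argument is again drawing on the same source rather than an independently known input. Once $|\ho{3}{\spl{2}{\F{q}}}{\Z[1/p]}|=q^2-1$ is established directly, the cleaner route to the second assertion is simply that the stabilization map $\ho{3}{\spl{2}{\F{q}}}{\Z[1/p]}\to \kind{\F{q}}$ is surjective between groups of the same order, hence an isomorphism; the Suslin-type sequence then becomes a consequence rather than an input.
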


\begin{lem}\cite[Section 5,7]{hut:arxivcplx11}\label{lem:arxivcplx}
\[
\bl{\F{q}}\cong \left\{
\begin{array}{ll}
\Z/(q+1)/2,& q\mbox{ odd}\\
\Z/(q+1),& q \mbox{ even}
\end{array}
\right.
\]
and if $K\subset \spl{2}{\F{q}}$ is a cyclic subgroup of order $\zzhalf{(q+1)}$ then the composite map 
\[
\Z/\zzhalf{(q+1)}\cong\ho{3}{K}{\zhalf{\Z}}\to\ho{3}{\spl{2}{\F{q}}}{\zhalf{\Z}}\to \zhalf{\bl{\F{q}}}
\]
is an isomorphism.
\end{lem}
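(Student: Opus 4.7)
The plan is to deduce this from Theorem \ref{thm:blochfinite} together with Quillen's computation $K_3(\F{q}) \cong \Z/(q^2-1)$ and the vanishing $K_3^M(\F{q}) = 0$, which jointly give $\kind{\F{q}} \cong \Z/(q^2-1)$. From the short exact sequence
\[
0 \to \covtor{\F{q}} \to \kind{\F{q}} \to \bl{\F{q}} \to 0,
\]
cyclicity of $\bl{\F{q}}$ is automatic as a quotient of a cyclic group, so I need only count orders. For $q$ odd, $\Tor{\mu_{\F{q}}}{\mu_{\F{q}}} \cong \Z/(q-1)$ has even order and $\covtor{\F{q}}$ is the unique nontrivial extension by $\Z/2$, namely $\Z/2(q-1)$, giving $|\bl{\F{q}}| = (q^2-1)/(2(q-1)) = (q+1)/2$. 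For $q$ even, $\covtor{\F{q}} = \Tor{\mu_{\F{q}}}{\mu_{\F{q}}} \cong \Z/(q-1)$, giving $|\bl{\F{q}}| = q+1$.

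For the second assertion, I would realize $K$ as the unique cyclic subgroup of odd order $\zzhalf{(q+1)}$ inside a non-split maximal torus $T \subset \spl{2}{\F{q}}$, where $T$ arises as $\ker(N \colon \F{q^2}^\times \to \F{q}^\times)$ acting on $\F{q^2} \cong \F{q}^2$. Then $\ho{3}{K}{\zhalf{\Z}} \cong \Z/\zzhalf{(q+1)}$; since dividing $q+1$ by $2$ does not affect its odd part (when $q$ is odd) and $q+1$ is already odd (when $q$ is even), the target $\zhalf{\bl{\F{q}}}$ also has order $\zzhalf{(q+1)}$. It therefore suffices to show the composite is surjective.

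Surjectivity can be established prime by prime. For each odd prime $\ell \neq p$ dividing $q+1$, the $\ell$-part of $|\spl{2}{\F{q}}| = q(q-1)(q+1)$ equals the $\ell$-part of $q+1$ (since $\gcd(q+1, q(q-1))$ divides $2$), so any Sylow $\ell$-subgroup $P_\ell$ of $\spl{2}{\F{q}}$ is cyclic and can be conjugated into $K$. A standard transfer argument then shows $\ho{3}{P_\ell}{\zhalf{\Z}} \to \ho{3}{\spl{2}{\F{q}}}{\zhalf{\Z}}$ is surjective on the $\ell$-primary component, because $\cores \circ \rs$ acts as multiplication by $[\spl{2}{\F{q}} : P_\ell]$, which is invertible at $\ell$. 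Combined with the surjection $\ho{3}{\spl{2}{\F{q}}}{\zhalf{\Z}} \twoheadrightarrow \zhalf{\bl{\F{q}}}$ from Theorem \ref{thm:blochfinite}, the composite is surjective at each $\ell$, hence surjective overall, and the order match forces it to be an isomorphism.

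The main obstacle is verifying that the Sylow-$\ell$ subgroup of $\spl{2}{\F{q}}$ lies in the non-split torus $T$: this uses the structural fact that for $\ell \neq p$ with $\ell \nmid q(q-1)$, the only elements of $\spl{2}{\F{q}}$ of order $\ell$ have eigenvalues in $\F{q^2} \setminus \F{q}$, whence they lie in non-split tori. A cleaner alternative, avoiding Sylow bookkeeping, would be to exhibit an explicit $3$-cycle in $\ho{3}{K}{\Z}$ whose image under the composite matches a known generator of $\bl{\F{q}}$, such as a Bloch symbol $\gpb{x}$ for $x$ a generator of $\F{q}^\times$.
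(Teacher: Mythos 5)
The paper offers no proof of this lemma; it simply cites sections 5 and 7 of \cite{hut:arxivcplx11}, so there is nothing in the present text to compare against line-by-line. Evaluated on its own terms, your argument is correct and cleanly self-contained given Theorem \ref{thm:blochfinite}. For the order count you use Quillen's $K_3(\F{q})\cong\Z/(q^2-1)$, the vanishing of $K_3^M$ of a finite field, and the definition of $\covtor{\F{q}}$ as the nontrivial extension of $\Tor{\mu_{\F{q}}}{\mu_{\F{q}}}\cong\Z/(q-1)$ by $\Z/2$ in odd characteristic (which inside the cyclic group $\Z/(q^2-1)$ is forced to be $\Z/2(q-1)$), and $=\Z/(q-1)$ in characteristic $2$; the order arithmetic $(q^2-1)/2(q-1)=(q+1)/2$ resp. $(q^2-1)/(q-1)=q+1$, together with cyclicity of any quotient of a cyclic group, gives the first assertion. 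For the second, since halving $q+1$ does not alter its odd part, source and target both have order $\zzhalf{(q+1)}$, so surjectivity suffices. The Sylow-plus-transfer step is the right move: for odd $\ell\neq p$ with $\ell\mid q+1$, $\ell\nmid q(q-1)$, so the $\ell$-Sylow of $\spl{2}{\F{q}}$ has order $\ell^{v_\ell(q+1)}$ and therefore coincides (up to conjugacy) with the $\ell$-Sylow of $K$; the composite $\cores\circ\rs=[\spl{2}{\F{q}}:P_\ell]$ being an $\ell$-unit makes the inclusion-induced map surjective on $\ell$-primary parts, and composing with the surjection onto $\zhalf{\bl{\F{q}}}$ from Theorem \ref{thm:blochfinite} finishes the argument. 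Your closing remark about an explicit $3$-cycle is likely closer in spirit to the computations in the cited reference (which the present paper invokes via Lemma \ref{lem:h3g2rbf} for exactly that purpose), but the softer transfer argument you lead with is valid and arguably tidier as a derivation from the results already stated here.
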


\begin{cor} For any prime power $q$, $\zhalf{\pb{\F{q}}}$ is cyclic of order $\zzhalf{(q+1)}$.
\end{cor}
\begin{proof} $\asym{2}{\Z}{\F{q}^\times}$ has order dividing $2$, so the inclusion $\bl{\F{q}}\to\pb{\F{q}}$ induces 
an isomorphism $\zhalf{\bl{\F{q}}}\cong \zhalf{\pb{\F{q}}}$.
\end{proof}

\subsection{The map $\ho{3}{G}{\Z}\to\rbl{F}$}  
If $G$ is any subgroup of $\spl{2}{F}$, then by composing the map $\ho{3}{\spl{2}{F}}{\Z}\to \rbl{F}$ 
of Theorem \ref{thm:main}  with the map $\ho{3}{G}{\Z}\to \ho{3}{\spl{2}{F}}{\Z}$ induced by the inclusion of 
$G$ into $\spl{2}{F}$
we obtain a map $\ho{3}{G}{\Z}\to\rbl{F}$.
In \cite[Section 6]{hut:arxivcplx11} a recipe is given for 
calculating this map for subgroups $G$ of $\spl{2}{F}$ which don't act transitively on $\projl{F}$. 
We recall this 
calculation in the case that $G$ is a  finite cyclic subgroup.

First, given $4$ distinct points $x_0,x_1,x_2,x_3\in\projl{F}$ we define the \emph{refined cross ratio}  
$\rcr(x_0,x_1,x_2,x_3)\in\rpb{F}$ by 
\begin{eqnarray*}
\rcr(x_0,x_1,x_2,x_3)=\left\{
\begin{array}{ll}
\an{\frac{(x_2-x_0)(x_0-x_1)}{x_2-x_1}}\gpb{\frac{(x_2-x_1)(x_3-x_0)}{(x_2-x_0)(x_3-x_1)}},
&\mbox{ if } x_i\not=\infty\\
&\\
\an{x_1-x_2}\gpb{\frac{x_1-x_2}{x_1-x_3}},&\mbox{ if } x_0=\infty\\
&\\
\an{x_2-x_0}\gpb{\frac{x_3-x_0}{x_2-x_0}},&\mbox{ if } x_1=\infty\\
&\\
\an{x_0-x_1}\gpb{\frac{x_3-x_0}{x_3-x_1}},&\mbox{ if } x_2=\infty\\
&\\
\an{\frac{(x_2-x_0)(x_0-x_1)}{x_2-x_1}}\gpb{\frac{x_2-x_1}{x_2-x_0}},&\mbox{ if } x_3=\infty\\
\end{array}
\right.
\end{eqnarray*}

\begin{lem}\cite[Section 6]{hut:arxivcplx11}\label{lem:h3g2rbf}
Suppose that $G$ is a finite cyclic subgroup of $\spl{2}{F}$ of order $r$ with generator $t$. Let
$x\in \projl{F}$ with trivial stabilizer $G_x=1$, and let $y\in \projl{F}\setminus G\cdot x$. 

The composite 
\[
\Z/r\cong \ho{3}{G}{\Z} \to \rbl{F}
\]
is given by the formula 
\[
1\mapsto \sum_{i=0}^{r-1}\rcr(\beta_3^{x,y}(1,t,t^{i+1},t^{i+2})).
\]
where
\begin{eqnarray*}
\beta_3^{x,y}(1,t,t,t^2)&=&0\\
\beta_3^{x,y}(1,t,t^{i+1},t^{i+2})&=&(x,t(x),t^{i+1}(x),t^{i+2}(x))\mbox{ for }1\leq i\leq r-3\\
\beta_3^{x,y}(1,t,t^{r-1},1)&=&(y,t(x),t^{-1}(x),x)-(y, x, t(x),t^{-1}(x))\\
\beta_3^{x,y}(1,t,t^r,t^{r+1})&=&\beta_3^{x,y}(1,t,1,t)\\
&=&\left\{
\begin{array}{ll}
0, & y=t(y)\\
(y,t(y),x,t(x))+(y,t(y),t(x),x),& y\not= t(y)
\end{array}
\right. 
\end{eqnarray*}

Furthermore, the resulting map is independent 
of the particular choice of $x$ and $y$.
\end{lem}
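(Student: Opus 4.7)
The plan is to invoke the explicit recipe for the map $\ho{3}{H}{\Z}\to\rpb{F}$ given in section 6 of \cite{hut:arxivcplx11} for an arbitrary subgroup $H$ of $\spl{2}{F}$, and to specialize it to the cyclic subgroup $G=\langle t\rangle$. That recipe combines the bar resolution of $\Z$ over $\Z[H]$ with a chain complex built from the permutation modules $\Z[\projl{F}^{n}]$ on tuples of points of the projective line: an equivariant chain map is obtained by acting on a chosen basepoint, and the refined cross ratio $\rcr$ supplies the edge map from $4$-tuples of pairwise distinct points to $\rpb{F}$.

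The first step is to choose a cycle representing the generator of $\ho{3}{G}{\Z}\cong\Z/r$. Using the standard $r$-periodic resolution of $\Z$ over $\Z[G]$ and unwinding it into the homogeneous bar complex, one obtains the $3$-cycle $\sum_{i=0}^{r-1}(1,t,t^{i+1},t^{i+2})$. Naively applying the orbit map $g\mapsto g(x)$ term-by-term would yield $\sum_{i=0}^{r-1}(x,t(x),t^{i+1}(x),t^{i+2}(x))$, to which $\rcr$ could then be applied. The choice of $x$ with trivial stabilizer $G_x=1$ is possible because any nontrivial element of $G$ has at most two fixed points on $\projl{F}$, leaving cofinitely many eligible $x$.

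The main obstacle is that three of the summands are degenerate: for $i=0$ one has $g_1=g_2=t$; for $i=r-2$ one has $g_3=t^r=1=g_0$; and for $i=r-1$ one has $g_2=1=g_0$ and $g_3=t=g_1$. On each of these degenerate strata $\rcr$ is not directly defined, so the offending tuples must be replaced by homologous non-degenerate ones. This is where the auxiliary point $y\notin G\cdot x$ enters: one modifies the naive chain by adding explicit chain boundaries that substitute $y$ for a suitable coordinate, producing for $i=r-2$ and $i=r-1$ the precise $4$-tuples listed in the statement, while making the $i=0$ term an exact boundary that vanishes. In the sub-case $y=t(y)$, the $i=r-1$ contribution similarly collapses. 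The shape of these correction terms is essentially forced by the five-term relation $S_{x,y}$ underlying $\rpb{F}$.

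Independence from the choice of $x$ and $y$ is then a formal consequence: any two admissible pairs are connected by a finite sequence of elementary moves, each of which alters the representative chain by a boundary whose image in $\rpb{F}$ vanishes thanks to the relations $S_{x,y}$. I expect the bulk of the technical work to lie in verifying that the specific chain-level modifications curing the three degenerate terms really assemble into a genuine cycle in the underlying double complex, and that their cumulative image under $\rcr$ reproduces exactly the displayed formula.
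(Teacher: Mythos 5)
The paper does not actually prove this lemma; it is presented as a recollection, with the recipe deferred entirely to section 6 of \cite{hut:arxivcplx11}. Your sketch correctly identifies the general framework that recipe must rest on: the $3$-cycle $\sum_{i=0}^{r-1}(1,t,t^{i+1},t^{i+2})$ arising from the $r$-periodic resolution of a cyclic group, the push-forward to $4$-tuples of points of $\projl{F}$ via evaluation at a basepoint $x$ with trivial stabilizer, the observation that the summands with $i=0$, $r-2$, $r-1$ become degenerate, and the use of an auxiliary point $y\notin G\cdot x$ to repair them. That much is accurate and matches the shape of the stated formula.

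The gap is that you leave precisely the crux unverified, and you mis-attribute the mechanism producing the correction terms. You say their shape is ``essentially forced by the five-term relation $S_{x,y}$''; but $S_{x,y}$ is a relation in the \emph{target} $\rpb{F}$, whereas the corrections live one level up, in the complex of distinct $4$-tuples of points of $\projl{F}$ --- more accurately, in the double complex relating the bar resolution of $\spl{2}{F}$ to that configuration complex. Because the naive orbit map $g\mapsto g(x)$ fails to produce pairwise-distinct points for the three exceptional indices, the bar cycle must be lifted through the double complex by an explicit zig-zag, and the specific tuples involving $y$ in the statement are the data of that lift. A complete argument would need to (i) exhibit the differentials certifying that $\beta_3^{x,y}$ is a legitimate $3$-chain representing the same hyperhomology class as the bar cycle, (ii) check that applying $\rcr$ term-by-term gives the stated element of $\rpb{F}$, and (iii) construct the chain homotopy showing that changing $x$ or $y$ alters the representative only by a boundary. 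You acknowledge that this is where the technical work lies, but those verifications are exactly the content of the cited reference; as written, the proposal reconstructs the strategy rather than supplying a proof.
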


\section{Some algebra  in $\rpb{F}$}\label{sec:rpf}

In this section we study certain key elements and submodules of the refined pre-Bloch group of a field 
$F$.

\subsection{The elements $\suss{i}{x}$ and the modules $\ks{i}{F}$}

We recall the elements   
\[
\sus{x}:=
\gpb{x}+\gpb{x^{-1}}\in\pb{F}
\]
(for $x\in F^\times$). A straightforward calculation - see Suslin \cite{sus:bloch} - 
shows that these symbols allow us to define a group 
homomorphism
\[
F^\times\to \pb{F},\quad x\mapsto \sus{x}
\]
whose kernel contains $(F^\times)^2$; i.e. we have 
\[
\sus{x^2}=0\mbox{ and} \sus{xy}=\sus{x}+\sus{y}\mbox{ for all } x,y. 
\]
In particular, these elements satisfy $2\sus{x}=0$ for all $x$.

We now consider two liftings of these elements in $\rpb{F}$: For $x\in F^\times$ we let
\[
\suss{1}{x}:=\gpb{x}+\an{-1}{\gpb{x^{-1}}}
\]
and 
\[
\suss{2}{x}:=\left\{
\begin{array}{ll}
\an{1-x}\left(\an{x}\gpb{x}+\gpb{x^{-1}}\right),& x\not= 1\\
0,& x=1
\end{array}
\right.
\]

(If $F=\F{2}$, we interpret this as $\suss{i}{1}=0$ for $i=1,2$. For $F=\F{3}$, we have $\suss{1}{-1}=\suss{2}{-1}
=\gpb{-1}+\an{-1}\gpb{-1}$. )

The maps $F^\times\to\rpb{F}, x\mapsto \suss{i}{x}$ are no longer homomorphisms, but are $1$-cocycles 
for the action of $F^\times$:

\begin{lem}\label{lem:deriv}
Let $F$ be a field. 
For $i\in\{1,2\}$,  the map 
\[
F^\times\to\rpb{F}, x\mapsto \suss{i}{x}
\]
 is a $1$-cocycle; i.e. we have 
\[
\suss{i}{xy}=\an{x}\suss{i}{y}+\suss{i}{x}\mbox{ for all } x,y\in F^\times.
\]
\end{lem}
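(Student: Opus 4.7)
The plan is to derive both cocycle identities directly from the defining 5-term relation $S_{u,v}$ of $\rpb{F}$, by taking specific substitutions and combining them $\sgr{F}$-linearly.

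For $\suss{1}{\cdot}$, I would observe that the desired identity $\suss{1}{xy} - \an{x}\suss{1}{y} - \suss{1}{x} = 0$ splits naturally into a ``direct'' half in the symbols $\gpb{x},\gpb{y},\gpb{xy}$ and an ``inverse'' half in $\gpb{x^{-1}},\gpb{y^{-1}},\gpb{(xy)^{-1}}$. The direct half is produced by $S_{x,xy}$, using $xy/x = y$; this rearranges to express $\gpb{xy} - \an{x}\gpb{y} - \gpb{x}$ as a combination of the two ``correction'' symbols $\gpb{(1-x)/(1-xy)}$ and $\gpb{(1-x^{-1})/(1-(xy)^{-1})}$, with coefficients $\an{1-x}$ and $-\an{x^{-1}-1}$ respectively. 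The inverse half is produced by $\an{-1}\cdot S_{x^{-1},(xy)^{-1}}$, using $(xy)^{-1}/x^{-1}=y^{-1}$. The square-class identities $\an{-x^{-1}} = \an{-x}$, $\an{-(x-1)}=\an{1-x}$, and $\an{-(1-x^{-1})}=\an{x^{-1}-1}$ ensure that the correction symbols produced by this second relation are precisely the same ones that appear in the first, but with opposite signs. Summing the two relations therefore cancels the corrections and leaves exactly the cocycle identity for $\suss{1}{\cdot}$.

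For $\suss{2}{\cdot}$, I would carry out a parallel argument, but with an appropriately twisted combination of $S$-relations. The additional $\an{1-x}$ prefactor in the definition of $\suss{2}{\cdot}$ forces a more intricate matching of square-class coefficients: one takes an $\sgr{F}$-linear combination of $S_{x,xy}$, $S_{x^{-1},(xy)^{-1}}$ and possibly further substitutions, chosen so that the coefficients of $\gpb{x},\gpb{y},\gpb{xy},\gpb{x^{-1}},\gpb{y^{-1}},\gpb{(xy)^{-1}}$ on the left exactly match those arising in $\suss{2}{xy} - \an{x}\suss{2}{y} - \suss{2}{x}$ while the correction symbols continue to cancel pairwise. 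The presence of the $\an{1-x}$ factor in $\suss{2}{\cdot}$ is what makes this bookkeeping natural, since the same square class already appears as a coefficient in $S_{x,xy}$.

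The main obstacle is the bookkeeping of square-class coefficients together with several edge cases. Since $S_{u,v}$ is only defined for $u,v \not=1$ and $u \not=v$, the cases $x=1$, $y=1$, and $y=x^{-1}$ (so that $xy=1$) must be verified separately: for $x=1$ or $y=1$ the identity is immediate from the convention $\suss{i}{1}=0$, while the case $xy=1$ reduces to showing $\an{x}\suss{i}{x^{-1}} = -\suss{i}{x}$ in $\rpb{F}$, which itself follows from a different substitution in the $S$-relation. Finally, the small fields $\F{2}$ and $\F{3}$ are handled directly from the ad hoc descriptions of $\rpb{\F{2}}$ and $\rpb{\F{3}}$ supplied in the previous subsection.
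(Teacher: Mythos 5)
Your treatment of $\suss{1}{\cdot}$ is exactly the paper's: the cocycle identity is read off from $S_{x,xy}+\an{-1}S_{x^{-1},(xy)^{-1}}$, with the correction terms cancelling because the quantities $(1-x)/(1-xy)$ and $(1-x^{-1})/(1-(xy)^{-1})$ appear in both relations with the opposite square-class coefficients after twisting the second relation by $\an{-1}$. The edge cases $x=1$, $y=1$, $xy=1$ and the small fields are handled as you describe, and the $xy=1$ case is exactly the paper's argument (prove $\an{x}\suss{1}{x^{-1}}+\suss{1}{x}=0$ by an auxiliary substitution, which is where the hypothesis of at least $4$ elements gets used).

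The $\suss{2}{\cdot}$ half of your plan has a real gap, and the specific route you sketch is, as far as I can see, structurally blocked. The issue is the following. Write the generic combination $\alpha S_{x,xy}+\beta S_{x^{-1},(xy)^{-1}}$ with $\alpha,\beta\in\sgr{F}$. The coefficient of $\gpb{(1-x^{-1})/(1-(xy)^{-1})}$ is $-\alpha\an{x^{-1}-1}+\beta\an{1-x^{-1}}=\an{x^{-1}-1}(\an{-1}\beta-\alpha)$ and the coefficient of $\gpb{(1-x)/(1-xy)}$ is $\an{1-x}(\alpha-\an{-1}\beta)$. Cancellation of both correction symbols therefore forces $\beta=\an{-1}\alpha$, and with that constraint the remaining six terms collapse to $\alpha\bigl(\suss{1}{x}-\suss{1}{xy}+\an{x}\suss{1}{y}\bigr)$ — you just recover the $\suss{1}$ cocycle again, rescaled by $\alpha$. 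There is no choice of $\sgr{F}$-coefficients on these two relations whose correction terms cancel and whose ``symbol'' part equals $\suss{2}{xy}-\an{x}\suss{2}{y}-\suss{2}{x}$. Your fallback phrase ``and possibly further substitutions'' is where the actual content would have to live, but no candidate is identified, and the needed relations are not of the form $S_{u,v}$ with $u,v$ monomials in $x,y$.

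What the paper does instead is genuinely different. It introduces, for $x,y\in F^\times$, the symmetric quantity $Q(x,y):=\an{x}\gpb{x/y}+\an{y}\gpb{y/x}$, shows directly that $Q(x,y)=\an{y-x}\suss{2}{x/y}$, and then observes that the \emph{symmetrized} relation $S_{a,b}+S_{b,a}$ (for $a,b\neq 1$) packages precisely into
\[
Q(a^{-1}-1,\,b^{-1}-1)=Q(a^{-1},b^{-1})+Q(1-a,1-b),
\]
which after the $\an{y-x}$-translations becomes
\[
\suss{2}{\tfrac{a^{-1}-1}{b^{-1}-1}}=\suss{2}{\tfrac{b}{a}}+\an{\tfrac{b}{a}}\,\suss{2}{\tfrac{1-a}{1-b}}.
\]
Setting $x=b/a$ and $y=(1-a)/(1-b)$ (so $xy=(a^{-1}-1)/(b^{-1}-1)$), one solves $a=(1-y)/(1-xy)$, $b=x(1-y)/(1-xy)$, and thereby obtains the cocycle identity for all $x,y$ with $x,y,xy\neq 1$. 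Note that the underlying pair of $S$-relations being used is $S_{a,b}$ and $S_{b,a}$ for the rational functions $a,b$ just written, not for any quotient of $x$, $y$, $xy$ — this is the re-parametrization trick that your sketch does not have. Concretely, what you need to fill the gap is the auxiliary function $Q$ together with this change of variables; without it, the $\suss{2}$ case does not follow from the $S_{x,xy}$-style manipulations that work for $\suss{1}$.
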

\begin{proof} The statement is trivial for $F=\F{2}$ or $\F{3}$. We can thus assume $F$ has at least $4$ 
elements.
 
If $x=1$ or $y=1$, the required identities are clear.  If $x\not=1$ and $y\not=x^{-1}$ the relation
$0=S_{x,xy}+\an{-1}S_{x^{-1},x^{-1}y^{-1}}$ in $\rpb{F}$ yields the identity
\[
\suss{1}{x}-\suss{1}{xy}+\an{x}\suss{1}{y}=0.
\]
Thus we must also prove that $\an{x}\suss{1}{x^{-1}}+\suss{1}{x}=0$ for all $x\not=1$. Fix $x\not=1$ and choose 
$y\not\in\{ 1, x^{-1}\}$ (here we use that $F$ has at least $4$ elements). Then 
\[
\an{y}\suss{1}{x}=\suss{1}{xy}-\suss{1}{y}=-\an{xy}\suss{1}{x^{-1}}
\]
and multiplying by $\an{y}$ gives the required identity.

Now, for $x, y\in F^\times$, let 
\[
Q(x,y):=\an{x}\gpb{\frac{x}{y}}+\an{y}\gpb{\frac{y}{x}}\in\rpb{F}.
\]
Then
\begin{eqnarray*}
Q(x,y)=\an{y}\left(\an{\frac{x}{y}}\gpb{\frac{x}{y}}+\gpb{\frac{y}{x}}\right)
&=&\an{y}\an{1-\frac{x}{y}}\suss{2}{\frac{x}{y}}=\an{y-x}\suss{2}{\frac{x}{y}}.
\end{eqnarray*}

For $a,b\not=1$, the relation $0=S_{a,b}+S_{b,a}$ in $\rpb{F}$ gives the identity
\[
Q(a^{-1}-1,b^{-1}-1)=Q(a^{-1},b^{-1})+Q(1-a,1-b).
\]
Thus
\[
\an{b^{-1}-a^{-1}}\suss{2}{\frac{a^{-1}-1}{b^{-1}-1}}=\an{b^{-1}-a^{-1}}\suss{2}{\frac{b}{a}}+\an{a-b}
\suss{2}{\frac{1-a}{1-b}}
\] 
and hence
\[
\suss{2}{\frac{b^{-1}-1}{a^{-1}-1}}=\suss{2}{\frac{b}{a}}+\an{\frac{b}{a}}\suss{2}{\frac{1-a}{1-b}}.
\]

Now if we fix $x,y\not=1$ with $xy\not=1$, we can solve the equations
 \[
x=\frac{b}{a},\qquad y=\frac{1-a}{1-b}
\]
for $a$ and $b$ and prove the required identity for $\suss{2}{\mbox{\ }}$.
\end{proof}

Observe from the definitions that, for $i=1,2$, 
$\suss{i}{x^{-1}}=\an{-1}\suss{i}{x}$ for all $x\in F^\times$. In particular,
$\an{-1}\suss{i}{-1}=\suss{i}{-1}$ for $i=1,2$.  

The properties enumerated in following proposition will be used often in the remainder of this section.

\begin{prop}\label{prop:cocycle}
For $i\in \{1,2\}$ we have:
\begin{enumerate}
\item $\pf{x}\suss{i}{y}=\pf{y}\suss{i}{x}$ for all $x,y$
\item
 $\suss{i}{xy^2}=\suss{i}{x}+\suss{i}{y^2}$ for all $x,y$
\item $\pf{x}\suss{i}{y^2}=0$ for all $x,y$
\item $2\cdot\suss{i}{-1}=0$ for all $i$ 
\item $\suss{i}{x^2}=-\pf{x}\suss{i}{-1}$ for all $x$
\item $2\cdot\suss{i}{x^2}=0 $ for all $x$ and if $-1$ is a square in $F$ then $\suss{i}{x^2}=0$ for 
all $x$.
\item $\pf{x}\pf{y}\suss{i}{-1}=0$ for all $x,y$
\item $\an{-1}\pf{x}\suss{i}{y}=\pf{x}\suss{i}{y}$ for all $x,y$
\item Let 
\[
\epsilon(F):=\left\{
\begin{array}{ll}
1, & -1\in (F^\times)^2\\
2,&  -1\not\in (F^\times)^2
\end{array}
\right.
\]
The map $\sq{F}\to\rpb{F},\an{x}\mapsto \epsilon(F)\suss{i}{x}$ is a well-defined $1$-cocycle.
\end{enumerate}
\end{prop}
\begin{proof}
The identities $\suss{i}{1}=0$ and $\suss{i}{x^{-1}}=\an{-1}\suss{i}{x}$ follow from the definition of 
$\suss{i}{\mbox{\ }}$.  More generally, let $M$ be an $\sgr{F}$-module and 
let $\psi:F^\times\to M$ be a $1$-cocycle  satisfying
\[
\psi(1)=0\mbox{ and } \psi(x^{-1})=\an{-1}\psi(x)\mbox{ for all } x\in F^\times.
\] 
\begin{enumerate}
\item By the cocycle condition, for all $x,y\in F^\times$ we have 
\[
\psi(xy)=\an{x}\psi(y)+\psi(x)=\an{y}\psi(x)+\psi(y).
\]
Rearranging this latter equality, we deduce: $\pf{x}\psi(y)=\pf{y}\psi(x)$ for all $x,y$. 
\item For all $x,y$ we have 
\[
\psi(xy^2)=\an{y^2}\psi(x)+\psi(y^2)=\psi(x)+\psi(y^2)
\]
since $\an{y^2}=1$ in $\sgr{F}$.
\item From (1) together with the fact that $\pf{y^2}=0$ for all $y$, we deduce:
\[
\pf{x}\psi(y^2)=\pf{y^2}\psi(x)=0.
\] 
\item We have $\an{-1}\psi(-1)=\psi(-1)$ and thus 
\[
0=\psi(1)=\psi(-1\cdot -1)=\psi(-1)+\an{-1}\psi(-1)=2\psi(-1).
\]
\item For all $x$ we have
\[
\psi(x)=\psi\left(\frac{1}{x}\cdot x^2\right)=\psi\left(\frac{1}{x}\right)+\psi(x^2)=
\an{-1}\psi(x)+\psi(x^2).
\]
Thus
\[
\psi(x^2)=-\pf{-1}\psi(x)=-\pf{x}\psi(-1).
\]
\item The first statement follows from (4) and (5). For the second, observe that for any $x$ we 
have 
\[
\psi(x^2)=-\pf{-1}\psi(x)
\]
and $\pf{-1}=0$ if $-1$ is a square.
\item This statement follows from (3) and (5).
\item This is a restatement of (7); namely
\[
\pf{-1}\pf{x}\psi(y)=\pf{x}\pf{y}\psi(-1)=0.
\]
\item By (6),  $\epsilon(F)\psi(x^2)=0$ in $M$ for all $x$ and thus $\epsilon(F)\psi(xy^2)=\epsilon(F)\psi(x)$ 
for all $x,y$. Thus the proposed map is well-defined (and is thus clearly a $1$-cocycle). 
\end{enumerate}

\end{proof}

Now, for $i\in \{ 1,2\}$,  let $\ks{i}{F}$ denote the $\sgr{F}$-submodule of $\rpb{F}$ generated by the set 
$\{ \suss{i}{x}\ |\ x\in F^\times\}$. 

\begin{lem} \label{lem:kf}
Let $F$ be a field.
 Then for $i\in \{ 1,2\}$
\[
\lambda_1\left( \ks{i}{F}\right) = \pp{-1}(\aug{F})\subset \aug{F}^2
\]
and $\ker{\lambda_1\res{\ks{i}{F}}}$ is annihilated by $4$.
\end{lem}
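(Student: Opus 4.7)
The plan has two parts: first identify the image of $\lambda_1\res{\ks{i}{F}}$ by direct computation on the generators $\suss{i}{x}$; then bound the kernel by passing to the quotient modulo $\sgr{F}\suss{i}{-1}$.

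For the image, a direct calculation---using $\pf{x}\pp{x} = \an{x}^2 - 1 = 0$ and $\an{1-x^{-1}} = \an{-1}\an{1-x}\an{x}$---establishes
\[
\lambda_1(\suss{i}{x}) = -\pp{-1}\pf{x}\qquad (i = 1, 2)
\]
for every $x \neq 1$. Since $\pp{-1}\pf{x} = -\pf{x}\pf{-x}$ lies in $\aug{F}^2$, and the $\pf{x}$'s generate $\aug{F}$ as an abelian group, it follows that $\lambda_1(\ks{i}{F}) = \pp{-1}\aug{F}$.

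For the kernel, the central observation is the identity
\[
\pf{-1}\suss{i}{x} = \pf{x}\suss{i}{-1},
\]
a general property of $1$-cocycles on abelian groups (coming from $xy = yx$ combined with Lemma~\ref{lem:deriv}). Set $N := \ks{i}{F}/\sgr{F}\suss{i}{-1}$. The identity implies that $\pf{-1}$ annihilates $N$, so $\pp{-1} = 1 + \an{-1}$ acts as multiplication by $2$ on $N$. Moreover, by Corollary~\ref{cor:cocycle}(1) and (4), $\suss{i}{xy^2} \equiv \suss{i}{x}$ modulo $\sgr{F}\suss{i}{-1}$; hence the induced cocycle $F^\times \to N$ descends to $\sq{F}$, yielding a surjective $\sgr{F}$-linear map $\beta: \aug{F} \to N$ sending $\pf{x}$ to the image of $\suss{i}{x}$. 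The composite $\bar\lambda_1 \circ \beta: \aug{F} \to \pp{-1}\aug{F}$ is then just multiplication by $-\pp{-1}$.

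Given any $\eta$ in the kernel of $\bar\lambda_1$, choose $a \in \aug{F}$ with $\eta = \beta(a)$; then $\pp{-1}a = 0$, and so
\[
2\eta = \pp{-1}\eta = \beta(\pp{-1}a) = 0.
\]
Hence $\Ker{\bar\lambda_1} = K/\sgr{F}\suss{i}{-1}$ is annihilated by $2$, where $K := \Ker{\lambda_1\res{\ks{i}{F}}}$. Combining this with the facts that $\sgr{F}\suss{i}{-1}$ is itself annihilated by $2$ (Corollary~\ref{cor:cocycle}(3)) and is contained in $K$ (since $\pp{-1}\pf{-1} = \an{-1}^2 - 1 = 0$), the short exact sequence
\[
0 \to \sgr{F}\suss{i}{-1} \to K \to K/\sgr{F}\suss{i}{-1} \to 0
\]
shows that $4K = 0$. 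The main technical step is the verification that $\beta$ is well-defined, i.e., that the cocycle really does descend to $\sq{F}$ modulo $\sgr{F}\suss{i}{-1}$; the remaining computations are mechanical.
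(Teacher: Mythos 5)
Your proof is correct, and it follows the same essential strategy as the paper — namely, exploiting the $1$-cocycle property of $\suss{i}{\cdot}$ to build an $\sgr{F}$-linear map from $\aug{F}$ that approximately inverts $\lambda_1\res{\ks{i}{F}}$ — but the mechanism you use to handle well-definedness is different, and the difference is instructive. The paper sidesteps the fact that $x\mapsto\suss{i}{x}$ does not descend to $\sq{F}$ by doubling: it uses the $\sgr{F}$-linear map $\aug{F}\to\rpb{F}$, $\pf{x}\mapsto 2\suss{i}{x}$ (well-defined since $2\suss{i}{y^2}=0$), restricts to $\pp{-1}\aug{F}$, and shows the resulting composite with $\lambda_1$ is $\pm 4$ in a single step. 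You instead kill the obstruction at the source: you observe that $\pf{-1}\ks{i}{F}\subset\sgr{F}\suss{i}{-1}$ (from the symmetric identity $\pf{-1}\suss{i}{x}=\pf{x}\suss{i}{-1}$), pass to $N:=\ks{i}{F}/\sgr{F}\suss{i}{-1}$, and there the cocycle genuinely descends to $\sq{F}$ without any factor of $2$. On $N$ the element $\pp{-1}$ acts literally as $2$, so $\bar\lambda_1\circ\beta=-\pp{-1}$ forces $\ker{\bar\lambda_1}$ to be $2$-torsion, and the remaining factor of $2$ comes from the $2$-torsion module $\sgr{F}\suss{i}{-1}\subset K$. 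Your decomposition $4=2\cdot 2$ thus gives a cleaner accounting of where the annihilator comes from: one $2$ from the quotient and one $2$ from the subgroup. Both proofs are equally rigorous; yours is arguably more transparent about the structure, at the mild cost of introducing an auxiliary quotient module. (One small remark, for what it is worth: the paper's statement that $\mu\circ\lambda_1\res{\ks{i}{F}}$ is multiplication by $4$ is off by a sign — it is $-4$ — but this has no effect on the conclusion, and your version has the signs right.)
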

\begin{proof}
We use the identities
\[
\pf{a}\pf{b}=\pf{ab}-\pf{a}-\pf{b},\quad \an{-1}\pf{a}=\pf{-a}-\pf{-1}, \quad \pf{ab^2}=\pf{a}
\]
in $\aug{F}$. 

Thus 
\begin{eqnarray*}
\lambda_1(\suss{1}{x})&=&\lambda_1(\gpb{x})+\an{-1}\lambda_1(\gpb{x^{-1}})\\
&=&\pf{x}\pf{1-x}+\an{-1}\pf{x}\pf{x(x-1)}\\
&=& \pf{x(1-x)}-\pf{x}-\pf{1-x}+\an{-1}(\pf{x-1}-\pf{x}-\pf{x(x-1)})\\
&=& \pf{x(1-x)}-\pf{x}-\pf{1-x}
+\pf{1-x}-\pf{-x}-\pf{x(1-x)}+\pf{-1}\\
&=&\pf{-1}-\pf{x}-\pf{-x}=\pf{-x}\cdot\pf{x}\\
&=&-\pp{-1}\cdot\pf{x}
\end{eqnarray*}

Thus $\lambda_1(\ks{1}{F})=\pp{-1}(\aug{F})$.

For $x\not=1$ we have $\suss{2}{x}=\an{x(1-x)}\gpb{x}+\an{1-x}\gpb{x^{-1}}$ and thus
\begin{eqnarray*}
\lambda_1(\suss{2}{x})&=&\an{x(1-x)}\pf{x}\pf{1-x}+\an{1-x}\pf{x}\pf{x(x-1)}\\
&=&\an{x(1-x)}(\pf{x(1-x)}-\pf{x}-\pf{1-x})+\an{1-x}(\pf{x-1}-\pf{x(x-1)}-\pf{x})\\
&=& -\pf{1-x}-\pf{x}+\pf{x(1-x)}+\pf{-1}-\pf{-x}-\pf{x(1-x)}+\pf{1-x}\\
&=&\pf{-1}-\pf{x}-\pf{-x}=\pf{x}\cdot\pf{-x}=-\pp{-1}\cdot\pf{x}.
\end{eqnarray*}

Thus $\lambda_1(\ks{2}{F})=\pp{-1}(\aug{F})$ also.

For the second statement, recall that for any group $G$ and any $\Z[G]$-module $M$ a $1$-cocycle 
$\rho:G\to M$ gives rise to $\Z[G]$-homomorphism $\aug{G}\to M$, defined by $g-1\mapsto \rho(g)$. Thus, for $i\in\{1,2\}$, 
 we have a well-defined 
$\sgr{F}$-homomorphism 
\[
\aug{F}\to\rpb{F},\ \pf{x}\mapsto 2\suss{i}{x}.
\]
Combining this with the inclusion $\pp{-1}(\aug{F})\to\aug{F}$ we obtain an $\sgr{F}$-module homomorphism 
$\mu:\pp{-1}(\aug{F})\to \ks{i}{F}$ sending 
$\pp{-1}\pf{x}=\pf{x}+\an{-1}\pf{x}$ to $2\suss{i}{x}+\an{-1}2\suss{i}{x}$. However,  for any $x\in F^\times$ we 
have $2\pf{-1}\suss{i}{x}=2\pf{x}\suss{i}{-1}=0$ by Proposition \ref{prop:cocycle} (4), so that $2\an{-1}\suss{i}{x}=
2\suss{i}{x}$ and thus $2\suss{i}{x}+\an{-1}2\suss{i}{x} = 4\suss{i}{x}$. 

It follows that $\mu\circ \left({\lambda_1}\res{\ks{i}{F}}\right)$ is just multiplication by $4$, 
and the result is proved.
\end{proof}

\begin{rem}
Since $\pp{-1}\aug{F}$ is a free abelian group, it follows that, as an abelian group, 
$\ks{i}{F}$ decomposes as a direct sum $A\oplus \torsion{\ks{i}{F}}$ where
$A$ is a free abelian group and $4$ annihilates $\torsion{\ks{i}{F}}=\ker{\lambda_1\res{\ks{i}{F}}}$. 

Furthermore, if $\pf{-1}\suss{i}{x}=0$ for all $x$ (for example, if $-1\in (F^\times)^2$) then $\suss{i}{x^2}=0$ 
for all $x$ and the map $\sq{F}\to\rpb{F}, \an{x}\mapsto \suss{i}{x}$ is already a well-defined $1$-cocycle. 
The above arguments then show that $\torsion{\ks{i}{F}}=\ker{\lambda_1\res{\ks{i}{F}}}$ is annihilated by $2$. 
\end{rem}

\subsection{The constants $\bconst{F}$ and $\cconst{F}$}

In the classical pre-Bloch group  $\pb{F}$ the expression $\gpb{x}+\gpb{1-x}$ is known to be independent of 
$x\in F\setminus \{ 0,1\}$ (see, for example, \cite[Section 1]{sus:bloch}). 
Furthermore, this  constant has order dividing $6$. We consider now an analogous 
constant in $\rpb{F}$.

Let $F$ be a field with at least $4$ elements. For $x\in F^\times$, $x\not=1$ we let 
\[
\tilde{C}(x):=\gpb{x}+\an{-1}\gpb{1-x}\mbox{ and } C(x)=\tilde{C}(x)+\pf{1-x}\suss{1}{x}.
\]

\begin{lem} Let $F$ be a field with at least $4$ elements. Then 
$C(x)$ is constant; i.e. for all $x,y\in F\setminus\{ 0,1\}$ we have $C(x)=C(y)$ in 
$\rpb{F}$.
\end{lem}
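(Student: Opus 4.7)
The strategy is a direct calculation in $\rpb{F}$, combining the defining five-term relation $S_{a,b}$ with the cocycle identities for $\suss{1}{\cdot}$ from Lemma~\ref{lem:deriv} and Corollary~\ref{cor:cocycle}.

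First, I would rewrite $C(x)$ in a more explicit form. Expanding
\[
\pf{1-x}\suss{1}{x} = (\an{1-x}-1)\bigl(\gpb{x} + \an{-1}\gpb{x^{-1}}\bigr),
\]
and using the identity $\an{-(1-x)} - \an{-1} = \an{-1}\pf{1-x}$, one obtains
\[
C(x) = \an{1-x}\gpb{x} + \an{-1}\gpb{1-x} + \an{-1}\pf{1-x}\gpb{x^{-1}}.
\]
This presents $C(x) - C(y)$ as a manageable $\sgr{F}$-linear combination of the six basic symbols $\gpb{x}, \gpb{1-x}, \gpb{x^{-1}}, \gpb{y}, \gpb{1-y}, \gpb{y^{-1}}$.

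Second, I would apply the five-term relation $S_{x,y}$ directly (the case $x=y$ being trivial), together with the variants obtained by the substitutions $(x,y)\mapsto(1-x,1-y)$ and $(x,y)\mapsto(x^{-1},y^{-1})$. Each such relation is of the form ``basic symbol in $x$ minus basic symbol in $y$ plus cross-ratio correction $=0$''. An appropriate $\sgr{F}$-linear combination of these three relations, with coefficients $\an{1-x}$, $\an{-1}$, $\an{-1}\pf{1-x}$ respectively, matches the basic-symbol part of $C(x)-C(y)$ and leaves, on the other side, a collection of cross-ratio symbols $\gpb{y/x}$, $\gpb{(1-x)/(1-y)}$, $\gpb{(1-x^{-1})/(1-y^{-1})}$, and their images under $z\mapsto 1-z$ and $z\mapsto z^{-1}$. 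Since these cross-ratios are paired by those involutions, the remaining combination should cancel, either outright or after invoking the cocycle relations for $\suss{1}{\cdot}$ (which themselves arise from five-term relations, as used in the proof of Lemma~\ref{lem:deriv}).

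The main obstacle is the bookkeeping of square-class coefficients. Classically the analogue $[x]+[1-x] = [y]+[1-y]$ in $\pb{F}$ follows from one or two five-term relations with no coefficient concerns. In the refined setting every symbol is weighted by an element of $\sgr{F}$, and identities such as $\an{(1-x)(1-y)(x-y)^{2}} = \an{1-x}\an{1-y}$ and the cocycle relation $\suss{1}{ab} = \an{a}\suss{1}{b} + \suss{1}{a}$ must be invoked at exactly the right steps for the square-class coefficients of the cross-ratio terms to agree. I expect the rigorous verification to amount to a moderately long but essentially mechanical calculation in $\sgr{F}$ once the correct linear combination of five-term relations has been identified.
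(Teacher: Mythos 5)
Your explicit rewrite of $C(x)$ as $\an{1-x}\gpb{x}+\an{-1}\gpb{1-x}+\an{-1}\pf{1-x}\gpb{x^{-1}}$ is correct, but the proposed linear combination $\an{1-x}S_{x,y}+\an{-1}S_{1-x,1-y}+\an{-1}\pf{1-x}S_{x^{-1},y^{-1}}$ does \emph{not} have basic-symbol part equal to $C(x)-C(y)$. In $S_{x,y}$ the symbol $\gpb{y}$ appears with coefficient $-1$, so after multiplying by $\an{1-x}$ it contributes $-\an{1-x}\gpb{y}$, whereas $C(x)-C(y)$ requires $-\an{1-y}\gpb{y}$; similarly the $-\gpb{y^{-1}}$ term of $S_{x^{-1},y^{-1}}$ contributes $-\an{-1}\pf{1-x}\gpb{y^{-1}}$ in place of the required $-\an{-1}\pf{1-y}\gpb{y^{-1}}$. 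The discrepancies are multiples of $\pf{(1-x)(1-y)}\gpb{y}$ and $\pf{(1-x)(1-y)}\gpb{y^{-1}}$; these are not cross-ratio correction terms and are not removed by the cocycle identities for $\suss{1}{\mbox{\ }}$, so the cancellation you anticipate cannot close the argument. The obstruction is structural: any coefficient depending on $x$ alone, applied to a relation whose symbols involve both $x$ and $y$, will produce a $y$-symbol with the wrong square-class weight.

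The paper's proof avoids this asymmetry by working only with $S_{x,y}+\an{-1}S_{1-x,1-y}$, whose coefficients $1$ and $\an{-1}$ are independent of both variables, and it does not try to make the basic symbols match $C(x)-C(y)$ directly. The key observation is that the six cross-ratio correction terms in this sum pair up into three $\suss{1}$-symbols, e.g.\ $\an{x}\gpb{y/x}+\an{-x}\gpb{x/y}=\an{x}\suss{1}{y/x}$, and the cocycle identity $\an{x}\suss{1}{y/x}=\suss{1}{y}-\suss{1}{x}$ then splits each into a purely-$x$ piece minus a purely-$y$ piece. The relation thus reads $E(x)=E(y)$ where $E(x):=\tilde{C}(x)-\suss{1}{x}+\suss{1}{1-x^{-1}}-\suss{1}{x-1}$, and a short further cocycle computation, $\suss{1}{1-x^{-1}}-\suss{1}{x-1}-\suss{1}{x}=\pf{1-x}\suss{1}{x}$, identifies $E(x)=C(x)$. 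So the route forward from your setup is to drop the third five-term relation, keep the coefficients symmetric in $x$ and $y$, and aim for a relation of the form ``expression in $x$ alone $=$ same expression in $y$ alone'' rather than trying to exhibit $C(x)-C(y)$ directly as a consequence of five-term relations.
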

\begin{proof}
In $\rpb{F}$ we have 
\begin{eqnarray*}
0&=& S_{x,y}+\an{-1}S_{1-x,1-y}\\
&=&\gpb{x}-\gpb{y}+\an{x}\gpb{\frac{y}{x}}-\an{x^{-1}-1}\gpb{\frac{x^{-1}-1}{y^{-1}-1}}+\an{1-x}\gpb{\frac{1-x}{1-y}}\\
&&+\an{-1}\gpb{1-x}-\an{-1}\gpb{1-y}+\an{x-1}\gpb{\frac{1-y}{1-x}}-\an{1-x^{-1}}\gpb{\frac{y^{-1}-1}{x^{-1}-1}}+
\an{-x}\gpb{\frac{x}{y}}\\
&=&\tilde{C}(x)-\tilde{C}(y)+\an{x}\suss{1}{\frac{y}{x}}-\an{1-x^{-1}}\suss{1}{\frac{1-y^{-1}}{1-x^{-1}}}+
\an{x-1}\suss{1}{\frac{y-1}{x-1}}\\
&=&\left(\tilde{C}(x)-\suss{1}{x}+\suss{1}{1-x^{-1}}-\suss{1}{x-1}\right)-
\left(\tilde{C}(y)-\suss{1}{y}+\suss{1}{1-y^{-1}}-\suss{1}{y-1}\right)
\end{eqnarray*}
(using the cocycle property of $\suss{1}{\mbox{\ }}$ to obtain the last line).
Furthermore 
\begin{eqnarray*}
\suss{1}{1-x^{-1}}-\suss{1}{x-1}-\suss{1}{x}&=&\suss{1}{(x-1)x^{-1}}-\suss{1}{x-1}-\suss{1}{x}\\
=\an{x-1}\suss{1}{x^{-1}}-\suss{1}{x}&=&\an{1-x}\suss{1}{x}-\suss{1}{x}=\pf{1-x}\suss{1}{x}.
\end{eqnarray*}
\end{proof}

\begin{defi} Thus, for a given field $F$ with at least $4$ elements, we will denote by 
$\bconst{F}$ the common value of the expressions $C(x)$ for $x\in F\setminus\{ 0,1\}$.

Furthermore, we let $\bconst{\F{2}}$ denote the distinguished generator of $\rpb{\F{2}}=\rbl{\F{2}}$ and 
we let $\bconst{\F{3}}:=\suss{1}{-1}=(1+\an{-1})\gpb{-1}\in\rbl{\F{3}}$. 
\end{defi}

For any field $F$, we let  $\Phi(T)$ denote the polynomial $T^2-T+1\in F[T]$. Observe 
that if $F$ has characteristic $3$ then $-1$ is a root of $\Phi(T)$. In any other characteristic, a root of $\Phi(T)$ will
be of the form $-\zeta$, where $\zeta$ is a primitive cube root of unity.

\begin{cor}\label{cor:bconst}
 Let $F$ be a field. 
\begin{enumerate}
\item $\an{-1}\bconst{F}=\bconst{F}$
\item If $\Phi(T)$ has a root in $F$, then $\bconst{F}=\suss{1}{-1}$. 
\item For any field $F$, $\bconst{F}\in \rbl{F}$; i.e $\bw(\bconst{F})=0$.
\end{enumerate}
\end{cor}
\begin{proof}
\begin{enumerate}
\item For $x\not\in\{0,1\}$ we have $\bconst{F}=C(1-x)=\an{-1}C(x)=\an{-1}\bconst{F}$ 
(since $\an{-1}\pf{1-x}\suss{1}{x}=\pf{1-x}\suss{1}{x}$ by Proposition \ref{prop:cocycle} (8)).
\item $\F{2}$ contains no root of $\Phi(T)$. If $F=\F{3}$, then $-1$ is a root of $\Phi(T)$, but 
the result is true by definition. So we can assume that 
$F$ has at least $4$ elements.

Let $x$ be a root of $\Phi(T)$. Then $1-x=x^{-1}$ and thus
\begin{eqnarray*}
\bconst{F}=C(x)&=&\gpb{x}+\an{-1}\gpb{1-x}+\pf{1-x}\suss{1}{x}\\
&=&\suss{1}{x}+\pf{x^{-1}}\suss{1}{x}\\
&=&\suss{1}{x}+\pf{x}\suss{1}{x}=\an{x}\suss{1}{x}.
\end{eqnarray*}

Now $x^3=-1$, so that $x=-1\cdot x^4$ and hence $\an{x}=\an{-1}$. 

Furthermore, using Proposition \ref{prop:cocycle} (2) and (6), it follows that
\begin{eqnarray*}
=\suss{1}{x}=\suss{1}{-1\cdot x^4}=\suss{1}{-1}+\suss{1}{x^4}=\suss{1}{-1}+2\suss{1}{x^2}=\suss{1}{-1}\\
\end{eqnarray*}

Thus $\bconst{F}=\an{-1}\suss{1}{-1}=\suss{1}{-1}$ as claimed.

\item Fix $x\in F^\times\setminus\{ 1\}$. Recall that $\lambda_1(\suss{1}{x})=-\pp{-1}\pf{x}$ (see the proof of 
Lemma \ref{lem:kf}). Thus 
\begin{eqnarray*}
\lambda_1(\bconst{F})&=& \lambda_1(\gpb{x})+\an{-1}\lambda_1(\gpb{1-x})-\pf{1-x}\lambda_1(\suss{1}{x})\\
&=& \pf{1-x}\pf{x}+\an{-1}\pf{x}\pf{1-x}-\pf{1-x}\pp{-1}\pf{x}\\
&=& \pp{-1}\pf{1-x}\pf{x}-\pp{-1}\pf{1-x}\pf{x}=0.
\end{eqnarray*}

On the other hand, recalling that $F^\times$ acts trivially on $\asym{2}{\Z}{F^\times}$, we have 
\begin{eqnarray*}
\lambda_2(\bconst{F})&=& \lambda_2(\gpb{x})+\an{-1}\lambda_2(\gpb{1-x})-\pf{1-x}\lambda_2(\suss{1}{x})\\
&=& (1-x)\asymm x + x\asymm (1-x)=0 \mbox{ in } \asym{2}{\Z}{F^\times}
\end{eqnarray*}
proving the result.
\end{enumerate}
\end{proof}

\begin{defi} For any field $F$, we define the element $\cconst{F}\in \rbl{F}$ by $\cconst{F}:=2\bconst{F}$. 
\end{defi}

Observe that $\cconst{F}=(1+\an{-1})\bconst{F}$ by  Corollary \ref{cor:bconst} (1).

\begin{lem}
For any $x\in F\setminus\{ 0,1\}$ we have 
\[
\cconst{F}=\gpb{x}+\an{-1}\gpb{\frac{1}{1-x^{-1}}}-\suss{1}{\frac{1}{1-x}}
\]
\end{lem}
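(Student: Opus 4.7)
The plan is to exploit the identity $\bconst{F}=C(y)$ for the specific choice $y=u:=1/(1-x)$ (an element of the anharmonic orbit of $x$), and then reduce the problem to a cocycle identity for the derivation $\suss{1}{-}$.

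First I set $u=1/(1-x)$, so that $u^{-1}=1-x$ and $1-u=x/(x-1)=1/(1-x^{-1})$; the claim then reads
\[
2\bconst{F}=\gpb{x}+\an{-1}\gpb{1-u}-\suss{1}{u}.
\]
Using $\bconst{F}=C(u)=\gpb{u}+\an{-1}\gpb{1-u}+\pf{1-u}\suss{1}{u}$ to solve for $\an{-1}\gpb{1-u}$ and substituting into the right-hand side gives $\bconst{F}+\gpb{x}-\gpb{u}-\an{1-u}\suss{1}{u}$. Hence it suffices to prove
\[
\gpb{x}-\gpb{u}-\an{1-u}\suss{1}{u}=\bconst{F}.
\]

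Next I expand $\suss{1}{u}=\gpb{u}+\an{-1}\gpb{1-x}$ and use $\an{1-u}=\an{-x(1-x)}$ to rewrite the left-hand side as $\gpb{x}-\pp{-x(1-x)}\gpb{u}-\an{x(1-x)}\gpb{1-x}$. Setting this equal to $\bconst{F}=C(x)=\gpb{x}+\an{-1}\gpb{1-x}+\pf{1-x}\suss{1}{x}$ and applying the elementary factorisation $\an{x(1-x)}+\an{-1}=\an{-1}\pp{-x(1-x)}$ in $\sgr{F}$, the coefficients of $\gpb{u}$ and $\gpb{1-x}$ recombine as $-\pp{-x(1-x)}\suss{1}{u}$, reducing the claim to the single derivation-theoretic identity
\[
\pp{-x(1-x)}\suss{1}{u}+\pf{1-x}\suss{1}{x}=0 \quad \text{in } \rpb{F}.
\]

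Finally, this identity follows from Lemma~\ref{lem:deriv} by a short manipulation: using $\an{a}\suss{1}{b}=\suss{1}{ab}-\suss{1}{a}$ with $a=-x(1-x)$, $b=u$, together with $\suss{1}{u}=\an{-1}\suss{1}{1-x}$ and $\suss{1}{-x(1-x)}=\an{-x}\suss{1}{1-x}+\suss{1}{-x}$, one obtains $\pp{-x(1-x)}\suss{1}{u}=-\an{-1}\pf{x}\suss{1}{1-x}$, which by item~(7) of Corollary~\ref{cor:cocycle} and the symmetry $\pf{a}\psi(b)=\pf{b}\psi(a)$ (established in the proof of that corollary) equals $-\pf{1-x}\suss{1}{x}$, as required. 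The main obstacle is the middle reduction: one must correctly factor the scalar $\an{x(1-x)}+\an{-1}$ so that the residual identity takes the clean form above, after which the cocycle manipulation is mechanical.
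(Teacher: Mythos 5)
Your proof is correct and rests on the same underlying idea as the paper's: both exploit the anharmonic relation $2\bconst{F}=C(x)+C\!\left(\tfrac{1}{1-x}\right)$ together with the cocycle properties of $\suss{1}{\ }$. The difference is organizational — the paper expands both $C$-terms directly and grinds the error terms down to $-\suss{1}{\tfrac{1}{1-x}}$ via the auxiliary identity $\suss{1}{a}-\suss{1}{-a}=\an{a}\suss{1}{-1}$ and Corollary~\ref{cor:cocycle}(6), whereas you first isolate the cleaner key identity $\pp{-x(1-x)}\suss{1}{\tfrac{1}{1-x}}+\pf{1-x}\suss{1}{x}=0$ and dispatch it in three lines using the cocycle relation, the symmetry $\pf{a}\psi(b)=\pf{b}\psi(a)$, and Corollary~\ref{cor:cocycle}(7); both are valid and comparable in length, with yours making the role of the derivation structure a bit more transparent.
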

\begin{proof}
\begin{eqnarray*}
\cconst{F}=2\bconst{F}&=&C(x)+C\left(\frac{1}{1-x}\right)\\
&=&\gpb{x}+\an{-1}\gpb{1-x}+\pf{1-x}\suss{1}{x}+\gpb{\frac{1}{1-x}}+\an{-1}\gpb{\frac{1}{1-x^{-1}}}+
\pf{1-x^{-1}}\suss{1}{\frac{1}{1-x}}.
\end{eqnarray*}

However, 
\[
\an{-1}\gpb{1-x}+\gpb{\frac{1}{1-x}}=\suss{1}{\frac{1}{1-x}}
\]
and  
\[
\pf{1-x}\suss{1}{x}=\pf{\frac{1}{1-x}}\suss{1}{x}=\pf{x}\suss{1}{\frac{1}{1-x}}.
\]

Thus 
\begin{eqnarray*}
\cconst{F} 
&=& \gpb{x}+\an{-1}\gpb{\frac{1}{1-x^{-1}}}+\left(1+\pf{x}+\pf{1-x^{-1}}\right)\suss{1}{\frac{1}{1-x}}\\
&=& \gpb{x}+\an{-1}\gpb{\frac{1}{1-x^{-1}}}+\left(\an{x}+\an{1-x^{-1}}-1\right)\suss{1}{\frac{1}{1-x}}.
\end{eqnarray*}

So it remains to prove that 
\[
\an{x}\suss{1}{\frac{1}{1-x}}=-\an{1-x^{-1}}\suss{1}{\frac{1}{1-x}}
\]
for any $x\not= 0,1$.

Now, by the cocycle property,
\[
\an{x}\suss{1}{\frac{1}{1-x}}=\suss{1}{\frac{x}{1-x}}-\suss{1}{x}.
\] 

However, for any $a\in F^\times$, we have $\suss{1}{a}=\an{-1}\suss{1}{-a}-\suss{1}{-1}$, and thus for any 
$a,b\in F^\times$ we have 
\[
\suss{1}{a}-\suss{1}{b}=\an{-1}\suss{1}{-a}-\an{-1}\suss{1}{-b}=\suss{1}{-a^{-1}}-\suss{1}{-b^{-1}}.
\]
It follows that 
\[
\an{x}\suss{1}{\frac{1}{1-x}}=\suss{1}{-\frac{1-x}{x}}-\suss{1}{-x^{-1}}=\suss{1}{1-x^{-1}}-\suss{1}{-x^{-1}}.
\]

On the other hand, using the cocycle property again,
\begin{eqnarray*}
\an{1-x^{-1}}\suss{1}{\frac{1}{1-x}}&=& \suss{1}{\frac{1-x^{-1}}{1-x}}-\suss{1}{1-x^{-1}}\\
&=& \suss{1}{-x^{-1}}-\suss{1}{1-x^{-1}}
\end{eqnarray*}
completing the proof.
\end{proof}

\begin{lem} 
For any field $F$  we have $3\cconst{F}=6 \bconst{F}=0$.
\end{lem}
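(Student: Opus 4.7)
The plan is to reduce to the case where $F$ has at least four elements, since the result is immediate from the definitions in the exceptional cases: for $F=\F{2}$, $\bconst{F}$ generates a cyclic group of order $3$, so already $3\bconst{F}=0$; for $F=\F{3}$, $\bconst{F}=\suss{1}{-1}=(1+\an{-1})\gpb{-1}$, which satisfies $2\bconst{F}=0$ by the defining relation of $\rpb{\F{3}}$. In both cases $6\bconst{F}=3\cconst{F}=0$ trivially.

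For the main case, I would exploit the formula of the preceding lemma, $\cconst{F}=D(x)=\gpb{x}+\an{-1}\gpb{\frac{1}{1-x^{-1}}}-\suss{1}{\frac{1}{1-x}}$, which is valid for any $x\in F\setminus\{0,1\}$. The idea is to sum $D$ over the three-element orbit of $x$ under the substitution $\sigma:x\mapsto \tfrac{1}{1-x}$, which cycles $x\to\tfrac{1}{1-x}\to\tfrac{x-1}{x}\to x$. Since each summand equals $\cconst{F}$, the total is $3\cconst{F}$, and the strategy is to show it simplifies to zero.

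Concretely, the first step is to simplify the two arguments appearing in each $D$ under the substitution $x\mapsto \sigma(x)$: using $\tfrac{1}{1-y^{-1}}=\tfrac{y}{y-1}$ and $\tfrac{1}{1-y}$ at each $y=x,\sigma(x),\sigma^2(x)$, one obtains
\begin{align*}
D(x) &= \gpb{x}+\an{-1}\gpb{\tfrac{x}{x-1}}-\suss{1}{\tfrac{1}{1-x}},\\
D(\sigma(x)) &= \gpb{\tfrac{1}{1-x}}+\an{-1}\gpb{\tfrac{1}{x}}-\suss{1}{\tfrac{x-1}{x}},\\
D(\sigma^2(x)) &= \gpb{\tfrac{x-1}{x}}+\an{-1}\gpb{1-x}-\suss{1}{x}.
\end{align*}
The key observation is then that for each of the three values $y\in\{x,\tfrac{1}{1-x},\tfrac{x-1}{x}\}$ the two bracket terms in the sum pair up as $\gpb{y}+\an{-1}\gpb{y^{-1}}=\suss{1}{y}$ (by definition of $\suss{1}{\mbox{\ }}$), so the bracket contributions assemble into $\suss{1}{x}+\suss{1}{\tfrac{1}{1-x}}+\suss{1}{\tfrac{x-1}{x}}$, which precisely cancels the $\suss{1}$-terms carried in the three $D$'s. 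Hence $3\cconst{F}=0$.

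The computation is essentially bookkeeping with fractional linear substitutions, and the only potentially delicate point is verifying that the $\gpb{\cdot}\,$-pairs line up correctly across the three terms to match the definition of $\suss{1}{\mbox{\ }}$; this is routine given the explicit simplifications above. The equality $6\bconst{F}=3\cconst{F}$ is immediate from $\cconst{F}=2\bconst{F}$.
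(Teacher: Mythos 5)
Your proof is correct and takes essentially the same approach as the paper: both sum $D$ over an orbit of the order-$3$ Möbius transformation $y\mapsto 1/(1-y)$ and observe that the six bracket terms pair up to cancel the three $\suss{1}{\,\cdot\,}$ terms (the paper starts the cycle at $1/x$ rather than $x$ and simply expands everything, but the telescoping is the same). Your explicit dispatch of the $\F{2}$ and $\F{3}$ cases, where the formula $\cconst{F}=D(x)$ is not directly available, is a careful point the paper leaves implicit.
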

\begin{proof}
Fix $x\not= 0,1$. Then 
\begin{eqnarray*}
3\cconst{F}&=& D\left(\frac{1}{x}\right)+D\left(\frac{1}{1-x^{-1}}\right)+D(1-x)\\
&=& \gpb{\frac{1}{x}}+\an{-1}\gpb{\frac{1}{1-x}}-\gpb{\frac{1}{1-x^{-1}}}-\an{-1}\gpb{1-x^{-1}}\\
&+&\gpb{\frac{1}{1-x^{-1}}}+\an{-1}\gpb{x}-\gpb{1-x}-\an{-1}\gpb{\frac{1}{1-x}}\\
&+&\gpb{1-x}+\an{-1}\gpb{1-x^{-1}}-\gpb{\frac{1}{x}}-\an{-1}\gpb{x}\\
&=&0.
\end{eqnarray*}
\end{proof}

\begin{rem} Examples show that this is best possible.  Under the natural map $\rpb{F}\to\pb{F}$ the image 
of $\bconst{F}$ is the constant $C:=\gpb{x}+\gpb{1-x}\in\bl{F}$. It can be shown that this element has order $6$ 
for example when $F=\R$ (\cite[Section 1]{sus:bloch}) or when $F$ is a finite field with $q$ elements and 
$q\equiv -1\pmod{12}$
(\cite[Lemma 7.11]{hut:arxivcplx11}).
\end{rem}

\begin{thm}\label{thm:df}
Let $F$ be a field. Then 
\begin{enumerate}
\item  For all $x\in F^\times$,  
\[
\pf{x}\cconst{F}=\suss{1}{x}-\suss{2}{x}.
\]
\item Let $E$ be the field extension obtained from $F$ by adjoining a root of $\Phi(T)$. 
Then 
\[
\pf{x}\cconst{F}=0\mbox{ if }\pm x\in N_{E/F}(E^\times)\subset F^\times.
\]
\end{enumerate}
\end{thm}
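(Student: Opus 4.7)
My approach is direct verification using the representation $\cconst{F}=D(z)$ from the preceding lemma, for a strategically chosen $z$. Taking $z=(x-1)/x$ (so that $1/(1-z^{-1}) = 1-x$ and $1/(1-z) = x$) gives the compact form
\[
\cconst{F} \;=\; \gpb{\tfrac{x-1}{x}} + \an{-1}\gpb{1-x} - \suss{1}{x}.
\]
Applying $\pf{x}$, the last term simplifies via the cocycle property of $\suss{1}$ and Corollary \ref{cor:cocycle}(4) to
\[
\pf{x}\suss{1}{x} \;=\; \suss{1}{x^2} - 2\suss{1}{x} \;=\; -\pf{x}\suss{1}{-1} - 2\suss{1}{x}.
\]
The two remaining terms $\pf{x}\gpb{(x-1)/x}$ and $\an{-1}\pf{x}\gpb{1-x}$ are then processed via a suitable five-term relation (for instance $S_{(x-1)/x,\,1-x}$ or $S_{x,(x-1)/x}$) to bring the arguments back to $\gpb{x}$, $\gpb{x^{-1}}$, and auxiliary $\suss{1}$-terms. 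Using the group-ring identity $\pf{x}\an{-1}=\pf{-x}-\pf{-1}$ and collecting contributions, one compares with the explicit expansions
\[
\suss{1}{x}=\gpb{x}+\an{-1}\gpb{x^{-1}}, \qquad \suss{2}{x}=\an{x(1-x)}\gpb{x}+\an{1-x}\gpb{x^{-1}},
\]
to recover $\suss{1}{x}-\suss{2}{x}$. The main obstacle here is the coefficient bookkeeping in $\sgr{F}$; no conceptual difficulty arises beyond patience and choosing a convenient five-term relation.

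\textbf{Part (2).} If $\zeta_3\in F$, then the corollary following the constancy of $C(x)$ gives $\bconst{F}=\suss{1}{-1}$, so by Corollary \ref{cor:cocycle}(3) we have $\cconst{F}=2\suss{1}{-1}=0$, and the claim is immediate.

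Assume henceforth that $\zeta_3\notin F$, so $E=F(\zeta_3)$ is a quadratic extension. The same corollary also yields $\an{-1}\bconst{F}=\bconst{F}$, hence $\pf{-1}\cconst{F}=2\pf{-1}\bconst{F}=0$. Since $\pf{-x}=\an{-1}\pf{x}+\pf{-1}$ in $\sgr{F}$, this gives $\pf{-x}\cconst{F}=\an{-1}\pf{x}\cconst{F}$, so $\pf{x}\cconst{F}=0$ if and only if $\pf{-x}\cconst{F}=0$. It therefore suffices to prove the claim for $x\in N_{E/F}(E^\times)$.

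Since $\pf{x}$ depends only on $\an{x}$ and $(F^\times)^2\subset N_{E/F}(E^\times)$, and the norm form satisfies $N_{E/F}(\alpha+\beta\zeta_3)=\alpha^2-\alpha\beta+\beta^2=\beta^2(y^2-y+1)$ (with $y=\alpha/\beta$, $\beta\neq 0$), every non-square norm is $(F^\times)^2$-equivalent to $v:=y^2-y+1$ for some $y\in F^\times\setminus\{1\}$. By part (1), the task reduces to showing
\[
\suss{1}{v}-\suss{2}{v}=0 \qquad\text{in }\rpb{F}.
\]
The essential identity is $1-v=y(1-y)$, together with the factorization $1+y^3=(1+y)(1-y+y^2)=(1+y)v$, which ties $v$ to the classical Bloch arguments $y$ and $1-y$. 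Applying the five-term relation $S_{y,v}$ (or a combination of five-term relations based on $y$, $1-y$, and $v$), and then exploiting the cocycle identities for $\suss{1}$ and $\suss{2}$, the difference $\suss{1}{v}-\suss{2}{v}$ telescopes to zero.

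The hard part of part (2) is executing this collapse: identifying the precise sequence of five-term relations and cocycle manipulations. The special arithmetic of the norm form $y^2-y+1$ (equivalently, the factorization $1+y^3=(1+y)(y^2-y+1)$) makes the calculation tractable, if lengthy.
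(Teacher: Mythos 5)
Your proposal outlines a genuinely different route from the paper's, but it is not actually a proof: the two places where all the real work would have to happen are left as unverified assertions. The paper does not attempt to derive $\pf{x}\cconst{F}=\suss{1}{x}-\suss{2}{x}$ by direct manipulation in $\rpb{F}$. Instead, it chooses $t\in\spl{2}{F}$ of order $3$, applies the explicit formula of Lemma \ref{lem:h3g2rbf} to the composite $\Z/3\cong\ho{3}{\an{t}}{\Z}\to\rbl{F}$, and extracts the identity
\[
C(x,y)=\an{-\Phi(x)\,r}\bigl(\suss{2}{r}-\suss{1}{r}-\cconst{F}\bigr),\qquad \Phi(T)=T^2-T+1,
\]
whose crucial feature is that the left-hand side is \emph{independent} of the choices, while $r$ can be made arbitrary. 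Specializing $r=-1$ and $r=b^2$, and using the functional equation $\Phi(x)\Phi(y)=(x+y-1)^2\Phi\bigl((xy-1)/(x+y-1)\bigr)$, the paper first obtains $\an{\Phi(x)}\cconst{F}=\cconst{F}$ (which gives (2)), and only afterwards deduces (1) from the same formula. The argument also treats finite fields separately via $\rbl{k}\cong\bl{k}$, a case your sketch does not address (the theorem is stated for all fields, including $\F{2}$ and $\F{3}$, where $D(z)$ and a free choice of $z$ are unavailable).

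In your Part (1), after a correct specialization $z=(x-1)/x$ and a correct reduction of $\pf{x}\suss{1}{x}$, the terms $\pf{x}\gpb{(x-1)/x}$ and $\an{-1}\pf{x}\gpb{1-x}$ are the entire content of the identity, and you dispose of them with ``are then processed via a suitable five-term relation.'' That is not a proof, and it is far from clear that any single $S_{a,b}$ (or small combination) does the job; note that it must in particular encode the fact that $\cconst{F}$ is $3$-torsion, which is itself a nontrivial consequence of the five-term relation. In Part (2) you correctly isolate the norm form, correctly reduce via (1) to showing $\suss{1}{v}=\suss{2}{v}$ for $v=y^2-y+1$, and correctly record $1-v=y(1-y)$, but the claimed ``telescoping'' is again asserted, not exhibited; the natural expansion
\[
\suss{1}{v}-\suss{2}{v}=-\pf{v(1-v)}\gpb{v}-\an{-1}\pf{-(1-v)}\gpb{v^{-1}}
\]
does not visibly vanish and would require a substantial computation to kill. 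Your observation that $1+y^3=(1+y)\Phi(y)$ is suggestive, but the identity the paper actually needs is the multiplicative one for $\Phi$ above, which controls the \emph{square class} of $\Phi(x)$ in a way that $1+y^3=(1+y)\Phi(y)$ does not directly. As it stands the proposal is a plan with two unfilled gaps at exactly the hard points, so it cannot be regarded as a correct alternative proof.
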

\begin{proof}
We consider first the case of a finite field $F$. The results in the final section of \cite{hut:arxivcplx11} show that 
the natural map $\rpb{F}\to\pb{F}$ induces an isomorphism $\rbl{F}\cong\bl{F}$. Now $\cconst{F}\in\rbl{F}$ and thus
$\pf{x}\cconst{F}=0$ for all $x$. Similarly for all $x$, $\suss{1}{x}-\suss{2}{x}\in\rbl{F}$ and this maps to
$\sus{x}-\sus{x}=0$ in $\bl{F}$. 

Thus, we can assume without loss that $F$ is an infinite field.
 
Let
\[
t:=\matr{-1}{1}{-1}{0}\in \spl{2}{F}.
\]
So 
\[
t^2=t^{-1}=\matr{0}{-1}{1}{-1}
\]
and for $x\in \projl{F}$ we have $t(x)=1-x^{-1}$ and $t^{-1}(x)=(1-x)^{-1}$. 


Thus $t(x)=x$ if and only if $\Phi(x)=0$. 

We now choose $x\in F^\times\setminus\{ 1\}$ with $t(x)\not= x$ (i.e. $x$ not a root of $\Phi(T)$) 
and $y\in F^\times\setminus\{ 1\}$ satisfying 
$t(y)\not=y$ and $y\not\in \{ x,t(x),t^{-1}(x)\}$. 

By Lemma \ref{lem:h3g2rbf}, the natural composite map 
\[
\Z/3=\ho{3}{\an{t}}{\Z}\to\ho{3}{\spl{2}{F}}{\Z}\to\rbl{F}
\] 
is given by the formula
\[
1\mapsto \sum_{i=0}^{2}\rcr(\beta_3^{x,y}(1,t,t^{i+1},t^{i+2})):=C(x,y).
\]

Thus, using Lemma \ref{lem:h3g2rbf} again, we have 
\begin{eqnarray*}
C(x,y)&=&\rcr(\beta^{x,y}_3(1,t,t^{-1},1))+\rcr(\beta^{x,y}_3(1,t,1,t)\\
&=&\rcr(y,t(x),t^{-1}(x),x)-\rcr(y,x,t(x),t^{-1}(x))+\rcr(y,t(y),x,t(x))+\rcr(y,t(y),t(x),x)\\
&=& \an{\frac{(t^{-1}(x)-y)(y-t(x))}{t^{-1}(x)-t(x)}}\gpb{\frac{(t^{-1}(x)-t(x))(x-y)}{(t^{-1}(x)-y)(x-t(x))}}-
\an{\frac{(t(x)-y)(y-x)}{t(x)-x}}\gpb{\frac{(t(x)-x)(t^{-1}(x)-y)}{(t(x)-y)(t^{-1}(x)-x)}}\\
&&+\an{\frac{(x-y)(y-t(y))}{x-t(y)}}\gpb{\frac{(x-t(y))(t(x)-y)}
{(x-y)(t(x)-t(y))}}+\an{\frac{(t(x)-y)(y-t(y))}{t(x)-t(y)}}\gpb{\frac{(t(x)-t(y))(x-y)}
{(t(x)-y)(x-t(y))}}\\
\end{eqnarray*}

Now let 
\[
a=a(x,y)=\frac{(x-y)(t(x)-y)}{t(x)-x}.
\]

Multiplying $C(x,y)$ by the square class $\an{a}$ gives 
\begin{eqnarray*}
\an{a}C(x,y)&=&\an{\frac{(t^{-1}(x)-t(x))(x-y)}{(t^{-1}(x)-y)(x-t(x))}}
\gpb{\frac{(t^{-1}(x)-t(x))(x-y)}{(t^{-1}(x)-y)(x-t(x))}}-
\an{-1}\gpb{\frac{(t(x)-x)(t^{-1}(x)-y)}{(t(x)-y)(t^{-1}(x)-x)}}\\
&&+\an{\frac{(y-t(y))(x-t(x)}{(x-t(y))(y-t(x))}}
\gpb{\frac{(x-t(y))(t(x)-y)}{(x-y)(t(x)-t(y))}}
+\an{\frac{(y-t(y))(t(x)-x)}{(x-y)(t(x)-t(y)}}\gpb{\frac{(t(x)-t(y))(x-y)}
{(t(x)-y)(x-t(y))}}\\
\end{eqnarray*}

Now let 
\[
s=s(x,y):=\frac{(t^{-1}(x)-t(x))(x-y)}{(t^{-1}(x)-y)(x-t(x))}
\]
and
\[
u=u(x,y):=\frac{(x-y)(t(x)-t(y))}{(x-t(y))(t(x)-y)}.
\]
Thus
\begin{eqnarray*}
\an{a}C(x,y)&=&\an{s}\gpb{s}-\an{-1}\gpb{\frac{1}{1-s}}+\an{1-u}\gpb{u^{-1}}+\an{u^{-1}-1}\gpb{u}\\
&=&\an{s}\gpb{s}-\an{-1}\gpb{\frac{1}{1-s}}+\suss{2}{u}.
\end{eqnarray*}

Now 
\[
t^{-1}(x)-t(x)=\frac{x^2-x+1}{x(x-1)},\ t^{-1}(x)-y=\frac{1-y+xy}{1-x}, \ x-t(x)=-\frac{x^2-x+1}{x}
\]
so that
\[
s=\frac{x-y}{1-y+xy}.
\]

On the other hand,  
\[
u=\frac{(x-y)^2}{(xy-y+1)(xy-x+1)}=s\cdot \frac{s}{s-1}=\frac{s}{1-s^{-1}}.
\]

Hence $C(x,y)=\an{a}E$ where 
\[
E=E(x,y)=\an{s}\gpb{s}-\an{-1}\gpb{\frac{1}{1-s}}+\suss{2}{\frac{s}{1-s^{-1}}}.
\]

Now, from the definition of $\suss{2}{s}$ and the identity $\an{-1}\suss{2}{a}=\suss{2}{a^{-1}}$,  we have
\[
\an{s-1}\suss{2}{s^{-1}}=\an{1-s}\suss{2}{s}=\an{s}\gpb{s}+\gpb{s^{-1}}.
\]
Furthermore
\[
\cconst{F}=D\left(s^{-1}\right)=\gpb{s^{-1}}+\an{-1}\gpb{\frac{1}{1-s}}-\suss{1}{\frac{1}{1-s^{-1}}}.
\]
It follows that
\begin{eqnarray*}
E&=&\an{s-1}\suss{2}{s^{-1}}-\cconst{F}-\suss{1}{\frac{1}{1-s^{-1}}}+\suss{2}{\frac{s}{1-s^{-1}}}.
\end{eqnarray*}
But 
\[
\an{s-1}=\an{\frac{s}{1-s^{-1}}}.  
\]
Thus, using the cocycle property of $\suss{2}{\ }$ we get 
\begin{eqnarray*}
E&=&\an{\frac{s}{1-s^{-1}}}\suss{2}{s^{-1}}+\suss{2}{\frac{s}{1-s^{-1}}}-\cconst{F}-\suss{1}{\frac{1}{1-s^{-1}}}\\
&=& \suss{2}{\frac{1}{1-s^{-1}}}-\suss{1}{\frac{1}{1-s^{-1}}}-\cconst{F}.
\end{eqnarray*}

Now let 
\[
r:=\frac{1}{1-s^{-1}}=\frac{x-y}{x-1-xy}
\]
 and observe that 
\[
a=\frac{(x-y)(t(x)-y)}{t(x)-x}=\frac{(x-y)(x-1-xy)}{x-1-x^2}=\frac{(x-y)(x-1-xy)}{-\Phi(x)}
\]
and hence $\an{a}=\an{-\Phi(x)r}$.  

Thus 
\begin{eqnarray}\label{eqn:df}
C=C(x,y)=\an{-\Phi(x)r}(\suss{2}{r}-\suss{1}{r}-\cconst{F})\in \rbl{F}
\end{eqnarray}
has order dividing $3$ and is independent of the choice of $x$ and $y$. 

For the remainder of the proof, we will suppose that  $x$ satisfies $(x+1)\Phi(x)\not=0$. 
Since for any $r\in F^\times\setminus\{ x^{-1}, x/(x-1)\}$ we can solve for $y$ 
\[
y=\frac{rx-r-x}{rx-1}\in F^\times
\] 
 then
clearly $r$ can assume any nonzero value other than $x^{-1}$ and $x/(x-1)$ by appropriate choice of $y$. 

In particular, taking $r=-1$ and multiplying both sides of the equation by $4$ we see that
\[
C=-\an{\Phi(x)}\cconst{F}
\]
since $4\suss{i}{-1}=0$ while $4C=C$ and $4\cconst{F}=\cconst{F}$.  

It follows that $\an{\Phi(x)}\cconst{F}$ is independent of $x$. Using the identity
\[
\Phi(x_1)\Phi(x_2)=(x_1+x_2-1)^2\Phi\left(\frac{x_1x_2-1}{x_1+x_2-1}\right)
\]
it follows that 
\[
\an{\Phi(x_1)}\an{\Phi(x_2)}=\an{\Phi\left(\frac{x_1x_2-1}{x_1+x_2-1}\right)}\ (=\an{\Phi(z)},\mbox{ say}).
\]
Thus, from
\[
\an{\Phi(x_2)}\cconst{F}=\an{\Phi(z)}\cconst{F}
\]
we deduce, on multiplying by $\an{\Phi(x_1)}$, that
\[
\an{\Phi(z)}\cconst{F}=\an{\Phi(x_1)}\an{\Phi(z)}\cconst{F}
\]
and thus that $\an{\Phi(x_1)}\cconst{F}=\cconst{F}$. Since we also have $\pf{-1}\cconst{F}=0$ (by Corollary 
\ref{cor:bconst} (1)), it follows more generally that 
\[
\pf{\pm\Phi(x)z^2}\cconst{F}=0
\]
for any $z$. Note that if $E\not= F$ then a norm from $E$ will be an element of the form
$\Phi(a)b^2$ for some $a,b\in F^\times$. Thus, statement (2) of the theorem follows immediately.  

Now choose $r=b^2$ in formula (\ref{eqn:df}). This gives 
\[
C=\an{\Phi(x)}(\suss{2}{b^2}-\suss{1}{b^2})-\cconst{F}.
\] 
Multiplying both sides by $4$ again shows that $C=-\cconst{F}$.  

Using this, formula (\ref{eqn:df}) says that 
\[
-\cconst{F}=\an{-\Phi(x)r}(\suss{2}{r}-\suss{1}{r}-\cconst{F})
\]
or, equivalently,
\[
\an{-\Phi(x)r}\cconst{F}=\cconst{F}+\suss{1}{r}-\suss{2}{r}
\]
and hence
\[
\pf{-\Phi(x)r}\cconst{F}=\suss{1}{r}-\suss{2}{r}
\]
for all $r$. Since $\pf{-\Phi(x)}\cconst{F}=0$  for all $x$ and since 
\[
\pf{-\Phi(x)r}=\pf{-\Phi(x)}\pf{r}+\pf{-\Phi(x)}+\pf{r}
\]
 it follows that
\[
\pf{r}\cconst{F}=\suss{1}{r}-\suss{2}{r}
\]
for all $r\in F^\times$, proving statement (1) of the theorem.
\end{proof}

\begin{rem} We will see below that statement (2) is in general best possible.  For example if $F$ is a local 
or global field not containing a primitive cube root of unity, $\zeta_3$, 
then often we have $\pf{x}\cconst{F}\not=0$ (and hence 
$\suss{1}{x}\not=\suss{2}{x}$ ) when $x$ is not a norm from $E=F(\zeta_3)$. 
\end{rem}

\begin{rem}\label{rem:h3sl2z} 
Observe that the element $t$ chosen in the last proof lies in the image of $\spl{2}{\Z}\to\spl{2}{F}$. 

Now it is well known 
that $\spl{2}{\Z}$ can be expressed as an amalgamated product $C_4*_{C_2}C_6$. 
Here $C_4$ is cyclic of order $4$ with generator $a$ and 
$C_6$ is cyclic of order $6$ with generator $b$ with 
\[
a=\matr{0}{1}{-1}{0}\mbox{ and } b=\matr{0}{-1}{1}{-1}.
\] 
Thus $b^2=t$ is our matrix of order $3$. 
A straightforward direct calculation, using this decomposition  (see, for example, \cite[Theorem 4.1.1]{knudson:book}), 
shows that 
$\ho{3}{\spl{2}{\Z}}{\Z}$ is cyclic of order $12$ and that the inclusion $G:=\an{t}\to \spl{2}{\Z}$ induces an 
isomorphism 
$\ho{3}{G}{\Z}\cong\ptor{\ho{3}{\spl{2}{\Z}}{{\Z}}}{3}$. 

It follows that for any field $F$, the image of the natural map 
$\ptor{\ho{3}{\spl{2}{\Z}}{{\Z}}}{3}\to\ptor{\rbl{F}}{3}$ is the 
cyclic subgroup generated by $-\cconst{F}$.

\end{rem}

\section{Valuations and Specialization}\label{sec:spec}

In this section, we prove the existence of specialization or reduction maps from the refined pre-Bloch group of a 
field to the 
refined pre-Bloch group of the residue field of a valuation. We use these specialization maps to obtain lower 
bounds for the 
groups $\aug{F}\rbl{F}$ for fields with valuations.


\subsection{Some preliminary definitions}

In this subsection we define certain quotients of the classical and refined Bloch and pre-Bloch groups 
which we will use in the remainder of the paper.

For any field $F$ we will let 
\[
\rrpb{F}:= \rpb{F}/\ks{1}{F}.
\]

Observe that for any $x$ we have $\lambda_1(\suss{1}{x})= \pf{-x}\cdot\pf{x}\in \aug{F}^2$, by the proof of 
Lemma \ref{lem:kf}, and 
\begin{eqnarray*}
\lambda_2(\suss{1}{x})&=&(1-x)\asymm x +(1-x^{-1})\asymm x^{-1}\\
= (1-x)\asymm x -\left(\frac{1-x}{-x}\right)\asymm x &=& (1-x)\asymm x-(1-x)\asymm x+ (-x)\asymm x\\ 
&=& (-x)\asymm x \in \asym{2}{\Z}{F^\times}.
\end{eqnarray*}
It follows that $\bw(\suss{1}{x})=\rsf{-x}{x}\in\rasym{2}{\Z}{F^\times}$ so that 
$\bw(\ks{1}{F})$ is the $\sgr{F}$-submodule of $\rasym{2}{\Z}{F^\times}$ generated by the symbols $\rsf{-x}{x}$.
Thus we set
\[
\rrasym{2}{\Z}{F^\times}:= \rasym{2}{\Z}{F^\times}/\bw(\ks{1}{F})
\]
and let 
\[
\rbw:\rrpb{F}\to \rrasym{2}{\Z}{F^\times}
\]
be the resulting $\sgr{F}$-homomorphism induced by $\bw$. Finally we let $\rrbl{F}:=\ker{\rbw}$.

 \begin{lem}\label{lem:rbftilde} 
For any field $F$ the natural map $\rpb{F}\to\rrpb{F}$ induces a surjection $\rbl{F}\to\rrbl{F}$ 
whose kernel is annihilated by $4$. In particular,
\[
\zhalf{\rbl{F}}\cong\zhalf{\rrbl{F}}.
\]  
\end{lem}
\begin{proof}
The surjectivity of the map is clear from the definitions. On the other hand, the kernel of the map 
$\rbl{F}\to\rrbl{F}$ is $\rbl{F}\cap\ks{1}{F}$ which is contained in 
$\ker{\lambda_1\res{\ks{1}{F}}}$ and which in turn is annihilated by $4$ by Lemma \ref{lem:kf}. 
\end{proof}

For finite fields, the results of \cite{hut:arxivcplx11} allow us to be more precise:

\begin{lem} \label{lem:rbffin}
Let $\F{q}$ be a finite field with $q$ elements. 
\begin{enumerate}
\item If $q$ is even or if $q\equiv 1\pmod{4}$ then 
$\bl{\F{q}}=\rbl{\F{q}}=\rrbl{\F{q}}$. This group is cyclic of order $(q+1)$ when $q$ is even and $(q+1)/2$ when 
$q\equiv 1\pmod{4}$.   

\item If $q\equiv 3\pmod{4}$ then $\rrbl{\F{q}}\cong\bl{\F{q}}/\an{\sus{-1}}$ is cyclic of order $(q+1)/4$.  
\end{enumerate}
\end{lem} 

\begin{proof} The cases $q=2$ or $q=3$ are immediate.

When $q\geq 4$, by \cite[Lemma 7.1]{hut:arxivcplx11}, 
the natural map $\rpb{\F{q}}\to\pb{\F{q}}$
induces an  isomorphism $\rbl{\F{q}}\cong\bl{\F{q}}$ and for all 
$x$ the image of $\suss{1}{x}\in\rbl{\F{q}}$ is $\sus{x}\in \bl{\F{q}}$, which has order divisible by $2$. 

If $q$ is even or if $q\equiv 1\pmod{4}$ then 
$\bl{\F{q}}$ is cyclic of order $q+1$ or $(q+1)/2$ respectively, and hence is  of odd order. 

On the other hand, if $q\equiv 3\pmod{4}$ then \cite[Lemma 7.8]{hut:arxivcplx11} shows that $\ks{1}{\F{q}}=
\an{\suss{1}{-1}}$ is cyclic of order $2$ and is contained in $\rbl{\F{q}}\cong\bl{\F{q}}$.
\end{proof}

We let $\kks{F}$ denote the $\sgr{F}$-submodule $\ks{1}{F}+\ks{2}{F}$ of $\rpb{F}$.  

\begin{lem}\label{lem:kks}
For any field $F$ we have 
\[
\kks{F}=\ks{1}{F}+\aug{F}\bconst{F}
\]
and 
\[
\kks{F}\cong \ks{1}{F}\oplus\aug{F}\cconst{F}.
\]
\end{lem}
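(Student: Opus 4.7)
The plan is to handle the three assertions separately.

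First, I would derive $\kks{F} = \ks{1}{F} + \aug{F}\cconst{F}$ as an immediate consequence of Theorem~\ref{thm:df}(1). The identity $\pf{x}\cconst{F} = \suss{1}{x} - \suss{2}{x}$ simultaneously exhibits each generator $\pf{x}\cconst{F}$ of $\aug{F}\cconst{F}$ as a difference of an element of $\ks{1}{F}$ and one of $\ks{2}{F}$, and each generator $\suss{2}{x}$ of $\ks{2}{F}$ as $\suss{1}{x} - \pf{x}\cconst{F}$. So both containments follow at once.

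Next, for $\ks{1}{F} + \aug{F}\cconst{F} = \ks{1}{F} + \aug{F}\bconst{F}$, the inclusion $\subseteq$ is immediate from $\cconst{F} = 2\bconst{F}$. The reverse reduces to showing $\pf{a}\bconst{F} \in \ks{1}{F} + \aug{F}\cconst{F}$ for every $a \in F^\times$. My plan is to establish the key claim that $3\bconst{F} \in \ks{1}{F}$; granted this, $3\pf{a}\bconst{F} = \pf{a}(3\bconst{F}) \in \ks{1}{F}$, and since $2\pf{a}\bconst{F} = \pf{a}\cconst{F}$ lies in $\aug{F}\cconst{F}$, subtracting yields $\pf{a}\bconst{F} = 3\pf{a}\bconst{F} - \pf{a}\cconst{F} \in \ks{1}{F} + \aug{F}\cconst{F}$.

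The main obstacle is this key claim. I would exploit the independence of $C(x)$ from the choice of $x$ and sum over the order-$3$ M\"obius orbit $\{x, (x-1)/x, 1/(1-x)\}$:
\[
3\bconst{F} = C(x) + C((x-1)/x) + C(1/(1-x)).
\]
Each correction $\pf{1-x_i}\suss{1}{x_i}$ inside $C(x_i)$ visibly lies in $\ks{1}{F}$, so modulo $\ks{1}{F}$ the sum collapses to $\tilde{C}(x) + \tilde{C}((x-1)/x) + \tilde{C}(1/(1-x))$, where $\tilde{C}(y) = \gpb{y} + \an{-1}\gpb{1-y}$. In $\rrpb{F} = \rpb{F}/\ks{1}{F}$ the vanishing $\suss{1}{y} = 0$ forces $\gpb{y^{-1}} = -\an{-1}\gpb{y}$; rewriting the three ``inverse'' summands $\gpb{1/(1-x)}$, $\gpb{1/x}$, $\gpb{x/(x-1)}$ in this way, and using $\an{-1}^2 = 1$, the six $\gpb{\cdot}$-terms pair up and cancel in $\rrpb{F}$. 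The ad hoc cases $F = \F{2}$ and $\F{3}$ are handled separately and trivially from the definitions, since there $\bconst{F}$ is either $3$-torsion or lies directly in $\ks{1}{F}$.

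Finally, the direct-sum decomposition reduces to showing $\ks{1}{F} \cap \aug{F}\cconst{F} = 0$. Since $3\cconst{F} = 0$, every element of $\aug{F}\cconst{F}$ is annihilated by $3$, while the Remark following Lemma~\ref{lem:kf} exhibits $\ks{1}{F}$ as the direct sum of a free abelian group and a group of exponent dividing $4$. Consequently $\ks{1}{F}$ has no nontrivial $3$-torsion, and the intersection must be zero.
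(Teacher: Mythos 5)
Your argument is correct, and each of the three pieces matches the paper's in spirit, but two of them are routed a bit differently. For $\ks{1}{F}+\aug{F}\bconst{F}=\ks{1}{F}+\aug{F}\cconst{F}$, the paper proves the single congruence $\bconst{F}\equiv-\cconst{F}\pmod{\ks{1}{F}}$ directly (reading $\cconst{F}=D(1/x)$ and $\bconst{F}=C(x)$ modulo $\ks{1}{F}$ and applying $\gpb{y^{-1}}\equiv -\an{-1}\gpb{y}$), from which both inclusions drop out. You instead prove $3\bconst{F}\in\ks{1}{F}$ by summing $C$ over the $S_3$-orbit of $x$ and cancelling in $\rrpb{F}$, then combine with $2\bconst{F}=\cconst{F}$ to get the reverse inclusion; since $\cconst{F}=2\bconst{F}$, your claim and the paper's congruence are literally equivalent, and your cancellation argument is essentially the one the paper uses one page earlier to prove $3\cconst{F}=0$, just run inside $\rrpb{F}$. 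For the triviality of $\ks{1}{F}\cap\aug{F}\cconst{F}$, the paper annihilates an element of the intersection by $3$ (from $3\cconst{F}=0$) and by $4$ (from $\ks{1}{F}\cap\rbl{F}\subset\ker(\lambda_1|_{\ks{1}{F}})$, which silently uses $\aug{F}\cconst{F}\subset\rbl{F}$); you instead quote the structural decomposition of $\ks{1}{F}$ into a free summand plus a group of exponent dividing $4$ to conclude $\ks{1}{F}$ has no $3$-torsion at all. Your version of this last step is marginally cleaner, since it avoids having to know $\cconst{F}\in\rbl{F}$. Your handling of the degenerate cases $\F{2}$, $\F{3}$ is also correct: $\ks{1}{\F{2}}=0$ with $3\bconst{\F{2}}=0$, and $\bconst{\F{3}}=\suss{1}{-1}\in\ks{1}{\F{3}}$ by definition.
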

\begin{proof}
By Theorem \ref{thm:df}, for any field $F$  we have an identity of  $\sgr{F}$-submodules of $\rpb{F}$ 
\[
\kks{F}=\ks{1}{F}+\ks{2}{F}=\ks{1}{F}+\aug{F}\cconst{F}.
\]
On the other hand since 
\begin{eqnarray*}
\cconst{F}\equiv\gpb{\frac{1}{x}}+\an{-1}\gpb{\frac{1}{1-x}}\pmod{\ks{1}{F}}\\
\bconst{F}\equiv\gpb{x}+\an{-1}\gpb{1-x}\pmod{\ks{1}{F}}\\
\mbox{ and } \gpb{x^{-1}}\equiv -\an{-1}\gpb{x}\pmod{\ks{1}{F}},
\end{eqnarray*}
it follows that $\bconst{F}\equiv-\an{-1}\cconst{F}\equiv-\cconst{F}\pmod{\ks{1}{F}}$ 
(using Corollary \ref{cor:bconst} (1)) and thus 
\[
\ks{1}{F}+\aug{F}\bconst{F}=\ks{1}{F}+\aug{F}\cconst{F}.
\]

Finally, suppose that $a\in \ks{1}{F}\cap\aug{F}\cconst{F}$. Then $3a=0$ since $3\cconst{F}=0$ and $4a=0$ 
since $a\in \ks{1}{F}\cap\rbl{F}$, so that $a=0$. Thus $\ks{1}{F}\cap\aug{F}\cconst{F}=0$.
\end{proof}

For any field $F$ we set  
\[
\rrrpb{F}:=\rpb{F}/\kks{F}=\rrpb{F}/\aug{F}\cconst{F}\mbox{ and } \rrrbl{F}=\rrbl{F}/\aug{F}\cconst{F}.
\]

\begin{cor}\label{cor:rrrbl}
For any field $F$ there is natural short exact sequence
\[
0\to\aug{F}\cconst{F}\to\rrbl{F}\to\rrrbl{F}\to 0.
\]
\end{cor}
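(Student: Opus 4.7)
The plan is to derive the short exact sequence directly from the definitions. Since by construction $\rrrbl{F} := \rrbl{F}/\aug{F}\bconst{F}$, once we know $\aug{F}\bconst{F} \subseteq \rrbl{F}$ the sequence
\begin{eqnarray*}
0 \to \aug{F}\bconst{F} \to \rrbl{F} \to \rrrbl{F} \to 0
\end{eqnarray*}
is tautological, and the corollary reduces to identifying the kernel with $\aug{F}\cconst{F}$. I would therefore split the argument into (i) showing $\aug{F}\bconst{F} \subseteq \rrbl{F}$, and (ii) producing a natural isomorphism $\aug{F}\cconst{F} \to \aug{F}\bconst{F}$ inside $\rrpb{F}$.

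For (i), the most economical route uses Theorem~\ref{thm:df}: for every $x \in F^\times$,
\begin{eqnarray*}
\pf{x}\cconst{F} = \suss{1}{x} - \suss{2}{x}.
\end{eqnarray*}
Apply $\bw = (\lambda_1,\lambda_2)$ to this element. By Lemma~\ref{lem:kf}, $\lambda_1(\suss{1}{x}) = \lambda_1(\suss{2}{x}) = -\pp{-1}\pf{x}$, so the $\lambda_1$-component vanishes. The $\lambda_2$-component equals $(\an{x}-1)\lambda_2(\cconst{F})$, which is also zero because $\sgr{F}$ acts trivially on $\asym{2}{\Z}{F^\times}$. Hence $\aug{F}\cconst{F} \subseteq \rbl{F}$, and its image under $\rpb{F} \to \rrpb{F}$ lies in $\rrbl{F}$.

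For (ii), I would invoke the preceding lemma, which gives the direct sum decomposition $\kks{F} = \ks{1}{F} \oplus \aug{F}\cconst{F}$. In particular $\ks{1}{F} \cap \aug{F}\cconst{F} = 0$, so the composition $\aug{F}\cconst{F} \hookrightarrow \rpb{F} \twoheadrightarrow \rrpb{F}$ is injective. The congruence $\bconst{F} \equiv -\cconst{F} \pmod{\ks{1}{F}}$ established in the same lemma shows the image coincides with $\aug{F}\bconst{F}$ in $\rrpb{F}$. Combining with (i), the composite $\aug{F}\cconst{F} \hookrightarrow \rrbl{F}$ is the required injection, and its cokernel is $\rrrbl{F}$ by definition.

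The only real issue is bookkeeping: to make the quotient defining $\rrrbl{F}$ literally meaningful, $\aug{F}\bconst{F}$ must sit inside $\rrbl{F}$ (not merely inside $\rrpb{F}$), and that is precisely what step (i) secures. Everything else is formal manipulation with the definitions of $\rrpb{F}$, $\rrbl{F}$ and $\rrrbl{F}$, using only the decomposition established in the preceding lemma and the identity of Theorem~\ref{thm:df}.
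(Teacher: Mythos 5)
Your proof is correct and is essentially the paper's argument: the corollary is stated without separate proof because it is meant to follow immediately from the definitions $\rrrpb{F}=\rrpb{F}/\aug{F}\bconst{F}$, $\rrrbl{F}=\rrbl{F}/\aug{F}\bconst{F}$ together with the preceding lemma's decomposition $\kks{F}=\ks{1}{F}\oplus\aug{F}\cconst{F}$ and the congruence $\bconst{F}\equiv-\cconst{F}\pmod{\ks{1}{F}}$. What you add, usefully, is an explicit verification of the inclusion $\aug{F}\cconst{F}\subseteq\rbl{F}$, which the paper uses tacitly (in the lemma's proof, via the phrase ``since $a\in\ks{1}{F}\cap\rbl{F}$'') without ever spelling it out.

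A small remark: your route via Theorem~\ref{thm:df} and the computation $\lambda_1(\suss{1}{x})=\lambda_1(\suss{2}{x})=-\pp{-1}\pf{x}$ works fine, but there is a slightly more direct path. One can check $\bconst{F}\in\rbl{F}$ outright: for $\lambda_1$,
\[
\lambda_1\bigl(\gpb{x}+\an{-1}\gpb{1-x}+\pf{1-x}\suss{1}{x}\bigr)
=\pf{1-x}\pf{x}\bigl(1+\an{-1}-\pp{-1}\bigr)=0,
\]
and for $\lambda_2$ the image of $\bconst{F}$ in $\pb{F}$ is the classical constant $\gpb{x}+\gpb{1-x}$, which lies in $\bl{F}$ since $(1-x)\asymm x+x\asymm(1-x)=0$. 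Then $\cconst{F}=2\bconst{F}\in\rbl{F}$, and $\aug{F}\cconst{F}\subseteq\rbl{F}$ follows because $\rbl{F}$ is an $\sgr{F}$-submodule. Either way, the argument closes the same small gap and the conclusion is the same.
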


If $F$ is a finite field, by Lemma \ref{lem:blochfin}, $\rbl{F}\cong\bl{F}$ and hence 
the action of $\sq{F}$ on $\rbl{F}$ is trivial and thus we have 
\begin{cor} For any finite field $F$, we have  $\rrrbl{F}=\rrbl{F}$. 
\end{cor}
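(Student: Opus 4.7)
The plan is to exploit the short exact sequence
\[
0 \to \aug{F}\cconst{F} \to \rrbl{F} \to \rrrbl{F} \to 0
\]
from the immediately preceding corollary, and to show that for a finite field $F$ the submodule $\aug{F}\cconst{F}$ vanishes, so that the projection $\rrbl{F} \to \rrrbl{F}$ becomes an isomorphism.

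The main step will be to verify that $\sq{F}$ acts trivially on $\rrbl{F}$ for every finite field $F$. For $\F{q}$ with $q \geq 4$, I would combine Lemma \ref{lem:blochfin}, which identifies $\rbl{\F{q}}$ with the classical Bloch group $\bl{\F{q}}$, with the fact that $\sq{F}$ acts trivially on $\bl{\F{q}}$ by construction; the trivial action then descends to the quotient $\rrbl{\F{q}}$ via the surjection $\rbl{F} \to \rrbl{F}$ provided by Corollary \ref{cor:rbftilde}. For the exceptional cases $F = \F{2}$ and $F = \F{3}$, I would check triviality directly from the \emph{ad hoc} definitions: $\rrbl{\F{2}} = \rbl{\F{2}}$ is the cyclic group of order $3$ with its distinguished generator fixed, and by Lemma \ref{lem:rbffin}(2) we have $\rrbl{\F{3}} = \rbl{\F{3}}/\an{\sus{-1}}$ trivial, in either case with nothing to worry about.

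Once triviality of the action is in hand, I would note that $\cconst{F} = 2\bconst{F}$ lies in $\rbl{F}$, hence its image lies in $\rrbl{F}$; the triviality of the $\sq{F}$-action then immediately gives $\aug{F}\cconst{F} = 0$ in $\rrbl{F}$, and the exact sequence above collapses to the desired equality $\rrbl{F} = \rrrbl{F}$. The hard part, such as it is, is merely bookkeeping the small-residue cases, since the substantive content has already been established in Lemmas \ref{lem:blochfin} and \ref{lem:rbffin}.
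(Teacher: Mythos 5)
Your proof is correct and follows essentially the same route as the paper: the paper's proof is precisely the observation that $\sq{F}$ acts trivially on $\rbl{F}\cong\bl{F}$ for finite $F$, so $\aug{F}\cconst{F}$ vanishes in $\rrbl{F}$ and the quotient map to $\rrrbl{F}$ is an isomorphism. Your extra care with the small fields $\F{2}$ and $\F{3}$ is fine but not a departure in method.
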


We will use the following identities repeatedly below:
\begin{lem}\label{lem:rrrpb}
For any field $F$ and any $x\in F^\times$, we have 
\[
\an{x}\gpb{x}=\an{-1}\gpb{x}=-\gpb{x^{-1}}
\]
 in $\rrrpb{F}$.
\end{lem}
\begin{proof}
Since $\suss{1}{x}=0$ in $\rrrpb{F}$, it follows that $\an{-1}\gpb{x}=-\gpb{x^{-1}}$. 
Since $\suss{2}{x}=0$, we have $\an{x}\gpb{x}=-\gpb{x^{-1}}$ also.
\end{proof}

We will also need to consider the corresponding quotient of the classical pre-Bloch group:

Let $\kl{F}$ denote the ($2$-torsion) subgroup of $\pb{F}$ generated by the elements $\sus{x}$, and let
\[
\redpb{F}:=\frac{\pb{F}}{\kl{F}}
\]
Thus, the natural map $\rpb{F}\to\pb{F}$ induces an ismorphism $\rrrpb{F}_{F^\times}\cong \redpb{F}$.

Recall that $\cconstmod{F}$ denotes the $\sgr{F}$-module $\sgr{F}\cconst{F}$. Finally, we let 
\[
\rrrrpb{F}:=\frac{\rpb{F}}{\ks{1}{F}+\cconstmod{F}}=\frac{\rrpb{F}}{\cconstmod{F}}=
\frac{\rrrpb{F}}{\Z\cdot\cconst{F}}
\]  
and
\[
\rrrrbl{F}:=\frac{\rrbl{F}}{\cconstmod{F}}.
\]

In section \ref{sec:local} below, we will also need the corresponding classical version:
\[
\rredpb{k}:=\frac{\pb{k}}{\Z\cconst{k}+\kl{k}}=\frac{\redpb{k}}{\Z\cconst{k}}. 
\]

Note that $\cconstmod{F}=\aug{F}\cconst{F}+\Z\cconst{F}$ and that the sum
$\ks{1}{F}+\cconstmod{F}$ is direct (by the argument of Lemma \ref{lem:kks}). Thus we have 
\begin{lem} 
\label{lem:rrrrbl}
For any field $F$ there are short exact sequences
\[
0\to\cconstmod{F}\to\rrbl{F}\to\rrrrbl{F}\to 0
\]
and
\[
0\to \Z\cconst{F}\to \rrrbl{F}\to \rrrrbl{F}\to 0.
\]
\end{lem}

\subsection{Valuations}

Now let $\Gamma$ be an ordered (additive) abelian group and let $v:F^\times\to \Gamma$ be a valuation, 
\emph{which we will always assume to be 
surjective}. As usual, let 
\[
\mathcal{O}=\mathcal{O}_v=\zero\cup\{ x\in F^\times\ |\ v(x)\geq 0\}
\]
 be the valuation ring, let  
\[
\mathcal{M}=\mathcal{M}_v=\zero\cup \{x\in F^\times\ |\ v(x)>0\}
\]
be  the maximal ideal, let $k=k_v=\mathcal{O}/\mathcal{M}$ be the residue 
field and 
\[
U=U_v=\{ x\in F^\times\ |\ v(x)=0\}=\mathcal{O}\setminus\mathcal{M}.
\]

 Also let $f_v$ be the group homomorphism $U\to k^\times$, $u\mapsto\bar{u}:=u+\mathcal{M}$, and let  
$U_1=U_{1,v}=\ker{f_v}=1+\mathcal{M}$.


Since $\Gamma$ is a torsion-free group we have a short exact sequence of $\F{2}$-vector spaces 
\begin{eqnarray}\label{eqn:split}
1\to U/U^2\to\sq{F}\to\modtwo{\Gamma}\to 0.
\end{eqnarray}

We have homomorphisms of commutative rings
\begin{eqnarray*}
\xymatrix{
\Z[U/U^2]\ar@{^(->}[r]\ar@{>>}[d]
&\sgr{F}\\
\sgr{k}
&{}
}
\end{eqnarray*}

Thus, if $M$ is any $\sgr{k}$ module 
\[
M_F=M_{F,v}:=\sgr{F}\otimes_{\Z[U/U^2]}M
\]
is an $\sgr{F}$-module.

We will require the following result several times in the next section:

\begin{lem}\label{lem:cancel}
Let $F$ be a field with  valuation $v:F^\times\to\Gamma$ 
and corresponding residue field $k$. Let $a,b\in F$ and suppose that 
$v(a)\equiv v(b)\pmod{2\Gamma}$. Then $\an{a}\otimes\bconst{k}=\an{b}\otimes\bconst{k}$ in $\rrrpb{k}_F$.
\end{lem}

\begin{proof}
We have $v(a)=v(b)+2\gamma$ for some $\gamma\in \Gamma$. Choose $c\in F^\times$ with $v(c)=\gamma$. Then
$u=a/(bc^2)\in U$. So $\an{a}-\an{b}=\an{b}\pf{u}$ in $\sgr{F}$. Thus
\[
\an{a}\otimes\bconst{k}-\an{b}\otimes\bconst{k}=\an{a}\otimes\pf{\bar{u}}\bconst{k}=0
\]
since $\pf{\bar{u}}\bconst{k}\in \aug{k}\bconst{k}$ and hence represents $0$ in $\rrrpb{k}$.
\end{proof}
\subsection{The specialization homomorphisms}

\begin{thm}\label{thm:spec}
Let $F$ be a field with  valuation $v$ and corresponding residue field $k$. 

Then there is a surjective $\sgr{F}$-module homomorphism   
\begin{eqnarray*}
S_{v}:\rpb{F}&\to&\rrrpb{k}_F\\
\gpb{a}&\mapsto&\left\{
\begin{array}{rc}
1\otimes \gpb{\bar{a}},& v(a)=0\\
1\otimes\bconst{k},& v(a)>0\\
-(1\otimes\bconst{k}),& v(a)<0\\
\end{array}
\right.
\end{eqnarray*}
\end{thm}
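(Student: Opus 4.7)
The plan is to construct $S_v$ as the $\sgr{F}$-linear extension of the prescription on generators given in the statement, and then verify that the five-term Suslin relations of $\rpb{F}$ lie in its kernel. First I define $\tilde S:\sgr{F}[F^\times]\to \rrrpb{k}_F$ as the $\sgr{F}$-module homomorphism sending $\gpb{a}$ to $1\otimes\gpb{\bar a}$, $1\otimes\bconst{k}$, or $-(1\otimes\bconst{k})$ according as $v(a)$ is zero, positive, or negative. Surjectivity is immediate: the elements $1\otimes\gpb{\bar a}$ with $\bar a\in k^\times\setminus\{1\}$ generate $\rrrpb{k}_F$ as an $\sgr{F}$-module and all lie in the image of $\tilde S$. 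The substance is to show that the relation
\[
R_{x,y}:=\gpb{x}-\gpb{y}+\an{x}\gpb{y/x}-\an{x^{-1}-1}\gpb{(1-x^{-1})/(1-y^{-1})}+\an{1-x}\gpb{(1-x)/(1-y)}
\]
lies in $\ker{\tilde S}$ for every $x,y\in F^\times\setminus\{1\}$ with $x\ne y$; the relation $\gpb{1}=0$ is preserved trivially.

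The verification goes by the $3\times 3$ case split on the pair $(\mathrm{sgn}\,v(x),\mathrm{sgn}\,v(y))\in\{-,0,+\}^2$, with additional sub-cases when $v(x)=v(y)=0$ according to the residual coincidences $\bar x=1$, $\bar y=1$, $\bar x=\bar y$. In the generic case $v(x)=v(y)=0$ with $\bar x,\bar y,\bar x/\bar y$ all $\ne 1$, the five arguments of $R_{x,y}$ are units with unit reductions $\ne 1$, and $\tilde S(R_{x,y})=1\otimes S_{\bar x,\bar y}=0$ by definition of $\rpb{k}$. In every degenerate case some arguments have $v\ne 0$ (contributing $\pm\bconst{k}$), reduce to $1$ (contributing $0$), or have coinciding reductions; one must check that these contributions, weighted by the specialised coefficients $\an{x},\an{x^{-1}-1},\an{1-x}\in\sgr{F}$, sum to zero in $\rrrpb{k}_F$.

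The cancellation rests on four ingredients: (i) $\aug{k}\bconst{k}=0$ in $\rrrpb{k}$, so that $1\otimes\bconst{k}$ is fixed by the action of any residual unit $\an{u}$, $u\in U$, via the tensor product over $\Z[\modtwo{U}]$; (ii) $\suss{1}{\bar c}=0$ in $\rrrpb{k}$, which absorbs the derivation terms that appear when reductions coincide; (iii) the identity $\bconst{k}\equiv\gpb{\bar c}+\an{-1}\gpb{1-\bar c}$ in $\rrrpb{k}$, which lets one recognise certain combinations of honest symbols as $\pm\bconst{k}$; and (iv) the short exact sequence $1\to\modtwo{U}\to\sq{F}\to\modtwo{\Gamma}\to 0$, which tracks the parity of $v(z)$ in the coefficient $\an{z}$.

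The main obstacle will be the configurations in which several of the five arguments are simultaneously non-units, because $\an{\pi}\otimes\bconst{k}\ne 1\otimes\bconst{k}$ in general (for $\pi$ a uniformizer), so coefficients cannot be pushed freely across the tensor product. I expect the most delicate sub-case to be $v(x)>0$, $v(y)=0$, $\bar y=1$ (forcing $v(1-y)>0$ and $v(y/x)<0$), together with its symmetric analogues, where three of the five terms reduce to $\pm\bconst{k}$ and the signs must be tracked carefully by tabulating the valuations of $y/x$, $(1-x^{-1})/(1-y^{-1})$ and $(1-x)/(1-y)$ in each scenario.
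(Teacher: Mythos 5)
Your overall strategy is exactly the paper's: define $T$ on the free $\sgr{F}$-module on symbols by the stated formula, and verify case by case that $T(S_{x,y})=0$, using that $\suss{1}$, $\suss{2}$ and $\aug{k}\bconst{k}$ all vanish in $\rrrpb{k}$. Two points in your sketch would need repair before it is a complete argument.

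First, the case split as stated is not self-consistent. You say you split on $(\mathrm{sgn}\,v(x),\mathrm{sgn}\,v(y))\in\{-,0,+\}^2$ with residual subcases \emph{only} when $v(x)=v(y)=0$, yet your own ``delicate sub-case'' is $v(x)>0$, $v(y)=0$, $\bar y=1$, which falls outside the $(0,0)$ cell. In fact the $U_1$ versus $U\setminus U_1$ distinction is a primary axis and has to be tracked whenever either argument is a unit: the five arguments of $S_{x,y}$ can change valuation type depending on whether $\bar x$ or $\bar y$ equals $1$, and the resulting target in $\rrrpb{k}_F$ is $0$, a genest symbol, or $\pm\,1\otimes\bconst{k}$ accordingly. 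The paper's case list $(U_1,\ U\setminus U_1,\ v\ne 0)^2$ reflects this directly.

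Second, your ingredient list is missing $3\bconst{k}=0$ in $\rrrpb{k}$. In the case $v(x)>0>v(y)$, after pushing the unit factors of $\an{1-x}$ and $\an{x^{-1}-1}$ across the tensor product, the two $\an{x}\otimes\bconst{k}$ terms cancel but three copies of $1\otimes\bconst{k}$ remain, and one has to invoke $3\bconst{k}=-3\cconst{k}=0$. Likewise you should record explicitly that $\suss{2}{\bar a}=0$ in $\rrrpb{k}$ (which does follow from your (i) and (ii) via $\pf{\bar a}\cconst{k}=\suss{1}{\bar a}-\suss{2}{\bar a}$, but is needed as such), since it is what gives $\an{-\bar a}\gpb{\bar a}=\gpb{\bar a}$ and $\gpb{\bar a^{-1}}=-\an{-1}\gpb{\bar a}=-\an{\bar a}\gpb{\bar a}$, the identities used in the mixed cases with one argument in $U\setminus U_1$.
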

\begin{proof}
Let $Z_1$ denote the set of symbols of the form $\gpb{x}, x\not= 1$ and let 
$T:\sgr{F}[Z_1]\to \rrrpb{k}_F$ be the unique $\sgr{F}$-homomorphism given by  
\begin{eqnarray*}
\gpb{a}&\mapsto&\left\{
\begin{array}{rc}
1\otimes \gpb{\bar{a}},& v(a)=0\\
1\otimes\bconst{k},& v(a)>0\\
-(1\otimes\bconst{k}),& v(a)<0\\
\end{array}
\right.
\end{eqnarray*}
We must prove that $T(S_{x,y})=0$ for all $x,y\in F^\times\setminus\{ 1\}$.

Through the remainder of this proof we will adopt the following notation: Given $x,y\in F^\times\setminus\{ 1\}$, 
we let
\[
u=\frac{y}{x}\mbox{ and } w=\frac{1-x}{1-y}.
\]
Note that 
\[
\frac{1-x^{-1}}{1-y^{-1}}=\frac{y}{x}\cdot\frac{x-1}{y-1}=uw.
\]
Thus, with this notation, $S_{x,y}$ becomes $\gpb{x}-\gpb{y}+\an{x}\gpb{u}-\an{x^{-1}-1}\gpb{uw}+\an{1-x}\gpb{w}$.

We divide the proof into several cases:
\begin{enumerate}
\item[Case (i):] $v(x),v(y)\not= 0$
\begin{enumerate}
\item[Subcase (a):] $v(x)=v(y)>0$. 

Then $1-x,1-y\in U_1$ and hence $w\in U_1$, so that $\bar{w}=1$ and $\overline{uw}=\bar{u}$. Thus 
\[
T(S_{x,y})=1\otimes\bconst{k}-1\otimes\bconst{k}+\an{x}\otimes\gpb{\bar{u}}-\an{x^{-1}-1}\otimes\gpb{\bar{u}}.
\]
However, $x^{-1}-1=x^{-1}(1-x)$, so that 
$\an{x^{-1}-1}\otimes\bar{u}=\an{x}\otimes\an{1-\bar{x}}\gpb{\bar{u}}=\an{x}\otimes\gpb{\bar{u}}$, and thus 
$T(S_{x,y})=0$ as required.

\item[Subcase (b):] $v(x)=v(y)<0$.

Then $u\in U$ and $uw\in U_1$ so that $\bar{w}=\bar{u}^{-1}$. Thus
\[
T(S_{x,y})=-1\otimes\bconst{k}+1\otimes\bconst{k}+\an{x}\otimes\gpb{\bar{u}}+\an{1-x}\otimes\gpb{\bar{u}^{-1}}.
\]

But $1-x=-x(1-x^{-1})$ and $1-x^{-1}\in U_1$, so that the last term is $\an{-x}\otimes\gpb{\bar{u}^{-1}}$ and 
hence $T(S_{x,y})=\an{x}\otimes\suss{1}{\bar{u}}=0$ in $\rrrpb{k}_F$.


\item[Subcase (c):] $v(x)>v(y)>0$.

Then $w\in U_1$ and $v(u), v(uw)<0$. So
\[
T(S_{x,y})=1\otimes \bconst{k}-1\otimes\bconst{k}-\an{x}\otimes\bconst{k}+\an{x^{-1}-1}\otimes\bconst{k}.
\]
But since $x^{-1}-1=x^{-1}(1-x)$ and $1-x\in U_1$ it follows that $\an{x^{-1}-1}\otimes\bconst{k}=\an{x}\otimes\bconst{k}$
and hence $T(S_{x,y})=0$.
\item[Subcase (d):] $v(x)>0>v(y)$.

Then $v(u)=v(y)-v(x)<0$, $v(w)=-v(1-y)=-v(y(y^{-1}-1))=-v(y)>0$ and $v(uw)=v(u)+v(w)=-v(x)<0$. So 
\[
T(S_{x,y})=1\otimes\bconst{k}+1\otimes\bconst{k}-\an{x}\otimes \bconst{k}+\an{x^{-1}-1}\otimes\bconst{k}+
\an{1-x}\otimes\bconst{k}.
\]
But $1-x\in U_1$ and $\an{x^{-1}-1}=\an{x}\an{1-x}$. So this gives $T(S_{x,y})=1\otimes 3\bconst{k}$. However,
$3\bconst{k}=-3\cconst{k}=0$ in $\rrrpb{k}$.
\item[Subcase (e):] $0>v(x)>v(y)$.

Then $v(u)=v(y)-v(x)<0$ and $uw\in U_1$, so that $v(w)=-v(u)>0$ and $\bar{w}=\bar{u}^{-1}$. Thus 
\[
T(S_{x,y})=-(1\otimes\bconst{k})+1\otimes\bconst{k}-\an{x}\otimes\bconst{k}+\an{1-x}\otimes\bconst{k}.
\]
Now $1-x=-x(1-x^{-1})$ and $1-x^{-1}\in U_1$, so the last term is 
$\an{x}\otimes\an{-1}\bconst{k}=\an{x}\otimes\bconst{k}$ by Corollary \ref{cor:bconst} (1). This gives 
$T(S_{x,y})=0$ as required.
\item[Subcases (f),(g),(h):] The corresponding calculations when $v(y)>v(x)$ are almost identical.
\end{enumerate}
\item[Case (ii):] $x,y\in U_1$.
\begin{enumerate}
\item[Subcase (a):] $v(1-x)\not= v(1-y)$.

Then $u\in U_1$ and $v(w)=v(uw)\not= 0$. So 
\[
T(S_{x,y})=\pm\left( \an{x^{-1}-1}\otimes \bconst{k}-\an{1-x}\otimes\bconst{k}\right)=0
\]
since $\an{x^{-1}-1}=\an{x^{-1}}\an{1-x}$ and $x^{-1}\in U_1$.

\item[Subcase (b):] $v(1-x)=v(1-y)$.

Then $u\in U_1$ and $w,uw\in U$ with $\bar{uw}=\bar{w}$. So
\[
T(S_{x,y})=-\an{x^{-1}-1}\otimes\gpb{\bar{w}}+\an{1-x}\otimes\gpb{\bar{w}}
\]
which is $0$ by the same argument as the previous (sub)case.
\end{enumerate}
\item[Case (iii):] $x\in U_1$, $v(y)\not=0$. 

Then $v(u)=v(y)$. Observe that $v(1-y)=\mathrm{min}(v(1),v(y))=\mathrm{min}(0,v(y))\leq 0$. Of course, $v(1-x)>0$.

Thus $v(w)=v(1-x)-v(1-y)>0$ and $v(uw)=v(u)+v(w)=v(1-x)+v(y)-v(1-y)>0$ 
since $v(y)-v(1-y)\geq 0$. So 
\[
T(S_{x,y})=\pm(1\otimes\bconst{k}-1\otimes\bconst{k})-\an{x^{-1}-1}\otimes \bconst{k}+\an{1-x}\otimes\bconst{k}=0
\]
since $x\in U_1$ and thus $\an{x^{-1}-1}=\an{1-x}$.

\item[Case (iv):] $v(x)\not=0$, $y\in U_1$

Arguing as in the last case, $v(w),v(uw)<0$ in this case.

\begin{enumerate}
\item[Subcase (a):] $v(x)>0$

Then $v(u)=-v(x)<0$. So 
\[
T(S_{x,y})=1\otimes \bconst{k}-\an{x}\otimes\bconst{k}+\an{x^{-1}-1}\otimes\bconst{k}-\an{1-x}\otimes\bconst{k}=0
\]
using Lemma \ref{lem:cancel} together with the identities $v(1-x)=0=v(1)$ and $v(x^{-1}-1)=-v(x)$.

\item[Subcase (b):] $v(x)<0$.

Then $v(u)>0$ and
\[
T(S_{x,y})=-1\otimes \bconst{k}+\an{x}\otimes\bconst{k}+\an{x^{-1}-1}\otimes\bconst{k}-\an{1-x}\otimes\bconst{k}.
\]
This vanishes by Lemma \ref{lem:cancel} together with the identities $v(1-x)=v(x)$ and $v(x^{-1}-1)=0=v(1)$.

\end{enumerate}

\item[Case (v):] $x\in U\setminus U_1$ and $v(y)\not=0$.

\begin{enumerate}
\item[Subcase (a):] $v(y)>0$.

Then $v(u)=v(y)>0$. Since $1-x\in U$ and  $1-y\in U_1$, it follows that $v(w)=0$, $\bar{w}=1-\bar{x}$ in $k$ and 
$v(uw)>0$. Thus 
\begin{eqnarray*}
T(S_{x,y})&=&1\otimes\gpb{\bar{x}}-1\otimes\bconst{k}+\an{x}\otimes\bconst{k}-\an{x^{-1}-1}\otimes\bconst{k}
+\an{1-x}\otimes\gpb{1-\bar{x}}\\
&=&1\otimes(\gpb{\bar{x}}+\an{1-\bar{x}}\gpb{1-\bar{x}}-\bconst{k})
\end{eqnarray*}
using Lemma \ref{lem:cancel} to eliminate terms.
Now 
$\an{1-\bar{x}}\gpb{1-\bar{x}}=\an{-1}\gpb{1-\bar{x}}$ in $\rrrpb{k}$ by Lemma \ref{lem:rrrpb}, and hence 
$\gpb{\bar{x}}+\an{1-\bar{x}}\gpb{1-\bar{x}}=\bconst{k}$ in $\rrrpb{k}$. Thus we conclude that 
$T(S_{x,y})=0$ as required. 
\item[Subcase (b):] $v(y)<0$

We have $v(u)=v(y)<0$ and $v(w)=-v(1-y)=-v(y)>0$. Thus $v(uw)=v(u)+v(w)=v(y)-v(y)=0$. Furthermore, since 
$1-y^{-1}\in U_1$, $\bar{uw}=1-\bar{x}^{-1}$. 

Thus (using Lemma \ref{lem:cancel} again)
\begin{eqnarray*}
S(T_{x,y})& = & 1\otimes \gpb{\bar{x}}+1\otimes \gpb{k}-\an{x}\otimes\bconst{k}-\an{x^{-1}-1}\otimes\gpb{1-\bar{x}^{-1}}
+\an{1-x}\otimes \bconst{k}\\
&=&1\otimes\left( \bconst{k}+\gpb{\bar{x}}-\an{\bar{x}^{-1}-1}\gpb{1-\bar{x}^{-1}}\right)\\
&=& 1\otimes\left( \bconst{k}+\gpb{\bar{x}}-\an{\bar{x}^{-1}-1}\gpb{1-\bar{x}^{-1}}\right).
\end{eqnarray*}
Now $\an{\bar{x}^{-1}-1}\gpb{1-\bar{x}^{-1}}=\gpb{1-\bar{x}^{-1}}$ in $\rrrpb{k}$ by Lemma \ref{lem:rrrpb}, and 
$\gpb{\bar{x}}=-\an{-1}\gpb{\bar{x}^{-1}}$ since $\suss{1}{\bar{x}}=0$. Thus 
\[
\gpb{\bar{x}}-\an{\bar{x}^{-1}-1}\gpb{1-\bar{x}^{-1}}=-\an{-1}(\gpb{\bar{x}^{-1}}+\an{-1}\gpb{1-\bar{x}^{-1}})=
-\an{-1}\bconst{k}=-\bconst{k}\mbox{ in } \rrrpb{k}
\]
and again $T(S_{x,y})=0$ as required.
\end{enumerate}
\item[Case (vi):] $y\in U\setminus U_1$ and $v(x)\not= 0$
\begin{enumerate}
\item[Subcase (a):] $v(x)>0$

we have $v(u)=-v(x)<0$, $v(w)=0$ and $1-x\in U_1$ so that $\bar{w}=(1-\bar{y})^{-1}$. Finally $v(uw)=v(u)<0$. Thus
\begin{eqnarray*}
T(S_{x,y})&=&1\otimes\bconst{k}-1\otimes\gpb{\bar{y}}-\an{x}\otimes\bconst{k}+\an{x^{-1}-1}\otimes\bconst{k}
+\an{1-x}\otimes\gpb{\frac{1}{1-\bar{y}}}\\
&=& 1\otimes\left( \bconst{k}-\gpb{\bar{y}}+\gpb{\frac{1}{1-\bar{y}}}\right)
\end{eqnarray*}
using Lemma \ref{lem:cancel} again (after observing that $v(x^{-1}-1)=-v(x)$ when $v(x)>0$). 

However 
\[
\gpb{\frac{1}{1-\bar{y}}}=-\an{-1}\gpb{1-\bar{y}}
\]
in $\rrrpb{k}$, and hence $T(S_{x,y})=1\otimes(\bconst{k}-\bconst{k})=0$.
\item[Subcase (b):] $v(x)<0$

We have $v(u)=-v(x)>0$ and $v(w)=v(1-x)=v(x)<0$ and $v(uw)=v(u)+v(w)=0$. 
Furthermore, since $1-x^{-1}\in U_1$, $\bar{uw}=1/(1-\bar{y}^{-1})$. Thus
\begin{eqnarray*}
T(S_{x,y})&=& -(1\otimes \bconst{k})-1\otimes\gpb{\bar{y}}+\an{x}\otimes\bconst{k}
-\an{x^{-1}-1}\otimes\gpb{\frac{1}{1-\bar{y}^{-1}}}-\an{1-x}\otimes\bconst{k}\\
&=& 1\otimes \left(-\bconst{k}-\gpb{\bar{y}}-\an{-1}\gpb{\frac{1}{1-\bar{y}^{-1}}}\right) 
\end{eqnarray*}
using Lemma \ref{lem:cancel} and the fact that $\an{x^{-1}-1}=\an{-1}\an{1-x^{-1}}$ and $1-x^{-1}\in U_1$.

But in $\rrrpb{k}$ we have, by Lemma \ref{lem:rrrpb}, 
\[
-\gpb{\bar{y}}-\an{-1}\gpb{\frac{1}{1-\bar{y}^{-1}}}=\an{-1}\gpb{\bar{y}^{-1}}+\gpb{1-\bar{y}^{-1}}=\bconst{k}.
\]

\end{enumerate}
\item[Case (vii):] $x\in U\setminus U_1$ and $y\in U_1$.

We have $v(u)=0$ and $\bar{u}=\bar{x}^{-1}$. Furthermore, $v(w)=-v(1-y)<0$ and thus $v(uw)=v(w)<0$ also.
Thus
\begin{eqnarray*}
T(S_{x,y})&=&1\otimes\gpb{\bar{x}}+\an{x}\otimes\gpb{\bar{x}^{-1}}+\an{x^{-1}-1}\otimes\bconst{k}
-\an{1-x}\otimes\bconst{k}\\
&=&1\otimes \left( \gpb{\bar{x}}+\an{\bar{x}}\gpb{\bar{x}^{-1}}\right)\mbox{ (by Lemma \ref{lem:cancel})}\\
&=&1\otimes\an{\bar{x}}\an{1-\bar{x}}\suss{2}{\bar{x}}=0.
\end{eqnarray*}
\item[Case (viii):] $x\in U_1$ and $y\in U\setminus U_1$

We have $u\in U$ and $\bar{u}=\bar{y}$, $v(w)=v(1-x)>0$ and $v(uw)=v(w)>0$. So
\begin{eqnarray*}
T(S_{x,y})&=& -(1\otimes\gpb{\bar{y}})+\an{x}\otimes\gpb{\bar{y}}-\an{x^{-1}-1}\otimes\bconst{k}+
\an{1-x}\otimes\bconst{k}=0
\end{eqnarray*}
by Lemma \ref{lem:cancel} and the fact that $x\in U_1$.
\item[Case (ix):] $x,y\in U\setminus U_1$

In this case $\bar{x},\bar{y}\in k^\times\setminus \{ 1\}$ and 
\[
T(S_{x,y})=1\otimes S_{\bar{x},\bar{y}}=1\otimes 0.
\]
\end{enumerate}
\end{proof}

Given a valuation $v$ on a field $F$ and an element $a\in F^\times$ with $v(a)\not= 0$, we will let 
$\epsilon_v(a)$ denote $\mathrm{sign}(v(a))\in\{ \pm 1\}$.  

Suppose that $\rho:\sgr{F}\to\sgr{k}$ is a  ring homomorphism satisfying $\rho(\an{u})=\an{\bar{u}}$ for any $u\in U$.
Then by composing $S_v$ with the surjective $\sgr{F}$-module homomorphism 
\[
\rrrpb{k}_F\to\rrrpb{k},\ x\otimes y\mapsto \rho(x)y.
\]
we obtain:

\begin{cor} \label{cor:srho}
Let $\rho:\sgr{F}\to\sgr{k}$ be any ring homomorphism satisfying $\rho(\an{u})=\an{\bar{u}}$ if $u\in U$. Then $\rho$
induces a $\sgr{F}$-module structure on $\rpb{k}$ and there is a surjective homomorphism of $\sgr{F}$-modules 
$S=S_\rho:\rpb{F}\to \rrrpb{k}$ determined by 
\begin{eqnarray*}
\gpb{a}&\mapsto&\left\{
\begin{array}{rc}
 \gpb{\bar{a}},& a\in U_v\\
\epsilon_v(a)\bconst{k},& a\not\in U_v\\
\end{array}
\right.
\end{eqnarray*}
\end{cor}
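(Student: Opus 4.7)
The plan is to deduce Corollary \ref{cor:srho} directly from Theorem \ref{thm:spec} by composing $S_v$ with a natural projection induced by $\rho$. Indeed, any ring homomorphism $\rho:\sgr{F}\to\sgr{k}$ turns every $\sgr{k}$-module into an $\sgr{F}$-module via restriction of scalars; applied to $\rpb{k}$ (and hence to $\rrrpb{k}$), this yields the $\sgr{F}$-module structure claimed in the statement.

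The key observation is that the hypothesis $\rho(\an{u})=\an{\bar{u}}$ for $u\in U$ says exactly that the composite
\[
\Z[\modtwo{U}]\hookrightarrow \sgr{F}\xrightarrow{\ \rho\ }\sgr{k}
\]
coincides with the canonical ring map $\Z[\modtwo{U}]\to\sgr{k}$ used to form $\rrrpb{k}_F=\sgr{F}\otimes_{\Z[\modtwo{U}]}\rrrpb{k}$. Consequently the assignment $r\otimes w\mapsto \rho(r)\cdot w$ is a well-defined $\sgr{F}$-module homomorphism
\[
\pi:\rrrpb{k}_F\twoheadrightarrow \rrrpb{k},
\]
which is obviously surjective since $\pi(1\otimes w)=w$.

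I would then define $S_\rho$ as the composition
\[
S_\rho\; :\; \rpb{F}\xrightarrow{\ S_v\ }\rrrpb{k}_F\xrightarrow{\ \pi\ }\rrrpb{k},
\]
which is an $\sgr{F}$-module homomorphism, and surjective because both factors are. The formula is then just a direct unwinding: for $a\in U_v$, $S_v(\gpb{a})=1\otimes\gpb{\bar a}$ maps under $\pi$ to $\gpb{\bar a}$; for $v(a)>0$, $S_v(\gpb{a})=1\otimes \bconst{k}$ maps to $\bconst{k}=\epsilon_v(a)\bconst{k}$; and for $v(a)<0$ it maps to $-\bconst{k}=\epsilon_v(a)\bconst{k}$. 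This matches the prescribed formula in all three cases.

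There is essentially no obstacle here beyond setting up the bookkeeping correctly: all the hard work was done in Theorem \ref{thm:spec}, where the compatibility of the assignment with the defining relations $S_{x,y}$ was verified case by case. The only mild point to watch is that $\rho$ is required to be a ring homomorphism (so that restriction of scalars actually produces a module structure) and to agree with the reduction map on $\Z[\modtwo{U}]$ (so that $\pi$ descends to the tensor product); both are built into the hypothesis.
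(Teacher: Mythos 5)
Your proof is correct and follows essentially the same route as the paper: compose the specialization map $S_v$ from Theorem \ref{thm:spec} with the canonical surjection $\rrrpb{k}_F\to\rrrpb{k}$, $x\otimes y\mapsto\rho(x)y$. The paper states this in a single line; your write-up simply makes explicit why the map on the tensor product is well-defined, which is the hypothesis on $\rho$ restricted to $\Z[\modtwo{U}]$.
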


Now, if we choose a splitting, $j:\modtwo{\Gamma}\to\sq{F}$ of the sequence (\ref{eqn:split}), 
we obtain a ring isomorphism
\[
\sgr{F}\cong\Z[U/U^2\times j(\modtwo{\Gamma})]\cong \Z[U/U^2]\otimes_\Z\Z[\modtwo{\Gamma}].
\] 

On the other hand, any character $\chi:\modtwo{\Gamma}\to \mu_2=\{ 1,-1\}$ induces a ring homomorphism 
$\Z[\modtwo{\Gamma}]\to \Z$. Thus, given a splitting $j$ and a character $\chi$, we obtain a ring homomorphism
\[
\rho=\rho_{j,\chi}:\sgr{F}\to\sgr{k}, \an{u\cdot j(\gamma)}\mapsto \chi(\gamma)\an{\bar{u}}
\]
and a corresponding surjective specialization map 
\[
S=S_{j,\chi}:\rpb{F}\to \rrrpb{k}
\]
which is also an $\sgr{F}$-homomorphism. 

\begin{exa}\label{exa:dv}
For example, if $v$ is a discrete valuation, then $\Gamma=\Z$ and $\modtwo{\Gamma}=\Z/2$. 
Any choice, $\pi$, of uniformizing 
parameter, determines a splitting $\Z/2\to\sq{F},\  1\mapsto \an{\pi}$. There are two characters, $\epsilon$, on $\Z/2$, 
namely $1$ and $-1$. Thus we get ring homomorphisms 
\[
\rho_{\pi,\epsilon}:\sgr{F}\to \sgr{k},\an{u\pi^r}\mapsto \epsilon^r\an{\bar{u}}. 
\]
 and corresponding specialization homomorphisms 
\[
S_{\pi,\epsilon}:\rpb{F}\to\rrrpb{k}.
\]
\end{exa}

In general, the specialization homomorphisms $S_{j,\chi}$ do not restrict to homomorphisms 
$\rbl{F}\to\rrrbl{k}$ of Bloch groups. 

To see this, we begin with the following observation:
\begin{lem}\label{lem:ap}
For a field $F$ and $x\in F^\times$ we have
\[
\pp{-1}\pf{x}\rrpb{F}\subset \rrbl{F}.
\]
\end{lem}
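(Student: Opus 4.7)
The strategy is to reduce the inclusion to a check on generators. Since $\rrbl{F}$ is an $\sgr{F}$-submodule of $\rrpb{F}$, and $\rrpb{F}$ is generated as an $\sgr{F}$-module by the (images of the) symbols $\gpb{a}$ with $a\in F^\times\setminus\{1\}$, it is enough to show that $\pp{-1}\pf{x}\gpb{a}\in\rrbl{F}$ for each such $a$. Equivalently, by the $\sgr{F}$-linearity of the refined Bloch--Wigner map, I must prove that the element $\pp{-1}\pf{x}\bw(\gpb{a})\in\rasym{2}{\Z}{F^\times}$ lies in the submodule $\bw(\ks{1}{F})$, so that it becomes zero in the quotient $\rrasym{2}{\Z}{F^\times}$.

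The heart of the argument is to exhibit an explicit preimage. I claim that
\[
w:=\pf{x}\pf{1-a}\suss{1}{a}\in\ks{1}{F}
\]
satisfies $\bw(w)=\pp{-1}\pf{x}\bw(\gpb{a})$ in $\rasym{2}{\Z}{F^\times}$. To verify this, I compare the two components in $\aug{F}^2\oplus\asym{2}{\Z}{F^\times}$. On the $\asym{2}{\Z}{F^\times}$ factor the $\sgr{F}$-action factors through the augmentation map, and both $\pp{-1}\pf{x}$ and $\pf{x}\pf{1-a}$ have augmentation zero, so the $\asym{2}{\Z}{F^\times}$-components on both sides vanish. On the $\aug{F}^2$ factor, I invoke the formula $\bw(\suss{1}{a})=-\rsf{-a}{a}$ from the excerpt, together with the identity $\pp{-1}\pf{a}=-\pf{-a}\pf{a}$ obtained from the computation $\lambda_1(\suss{1}{a})=-\pp{-1}\pf{a}$ in the proof of Lemma \ref{lem:kf}. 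This gives
\[
\lambda_1(w)=-\pf{x}\pf{1-a}\pf{-a}\pf{a}=\pp{-1}\pf{x}\pf{1-a}\pf{a}=\pp{-1}\pf{x}\lambda_1(\gpb{a}),
\]
which is exactly the desired identity.

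The argument is quite short; the only insight required is to guess the correct witness $w$, and this is essentially forced once one recognises $\pp{-1}\pf{a}$ as $-\lambda_1(\suss{1}{a})$. Consequently there is no real obstacle: the remaining work is bookkeeping with the distributivity rules $\pf{a}\pf{b}=\pf{ab}-\pf{a}-\pf{b}$ in $\sgr{F}$ and the observation that augmentation kills $\pf{x}$.
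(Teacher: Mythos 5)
Your proof is correct and takes essentially the same route as the paper's: both reduce to the generator $\gpb{a}$, use the triviality of the $\sq{F}$-action on $\asym{2}{\Z}{F^\times}$ to kill that component, and on the $\aug{F}^2$-component identify $\pp{-1}\pf{x}\lambda_1(\gpb{a})$ (up to sign) with $\pf{x}\pf{1-a}\lambda_1(\suss{1}{a})\in\bw(\ks{1}{F})$. (Minor point: since $\lambda_1(\suss{1}{a})=\pf{-a}\pf{a}=-\pp{-1}\pf{a}$, one actually gets $\lambda_1(w)=\pf{x}\pf{1-a}\pf{-a}\pf{a}=-\pp{-1}\pf{x}\lambda_1(\gpb{a})$, so the correct witness is $-w$ rather than $w$; this sign is harmless since $-w\in\ks{1}{F}$ as well, and the paper's own proof glosses over the same sign.)
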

\begin{proof}
Since $\sq{F}$ acts trivially on $\asym{2}{\Z}{F^\times}$ we have $\pf{x}\asym{2}{\Z}{F^\times}=0$ and thus
\begin{eqnarray*}
\rbw(\pf{x}\pp{-1}\gpb{a})&=&(\pf{x}\pf{1-a}\pp{-1}\pf{a},0)\\
&=&(\pf{x}\pf{1-a}\pf{-a}\pf{a},0)\\
&=&\pf{x}\pf{1-a}\rsf{-a}{a}=0\in \rrasym{2}{\Z}{F^\times}.
\end{eqnarray*}
\end{proof}

\begin{cor}\label{cor:spec}
Let $F$ be a field with valuation $v:F^\times\to \Gamma$, with corresponding residue field $k$. Let 
$j:\modtwo{\Gamma}\to \sq{F}$ be a splitting, and let $\chi$ be a \emph{nontrivial} character on $\modtwo{\Gamma}$. 

Then the image of 
\[
S_{j,\chi}\res{\rbl{F}}:\rbl{F}\to\rrrpb{k}
\] 
contains $2\pp{-1}\rrrpb{k}$. 
\end{cor}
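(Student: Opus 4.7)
The plan is to realize every element of $2\pp{-1}\rrrpb{k}$ as the image under $S := S_{j,\chi}$ of some element of $\rbl{F}$. Since $\rbl{F}$ is an $\sgr{F}$-submodule of $\rpb{F}$ and the ring homomorphism $\rho := \rho_{j,\chi} : \sgr{F} \to \sgr{k}$ is surjective (because $U \to k^\times$ is), it will suffice to produce, for each $\bar a \in k^\times \setminus \{1\}$, an element $\alpha_a \in \rbl{F}$ with $S(\alpha_a) = -2\pp{-1}\gpb{\bar a}$; the $\sgr{F}$-translates of the $\alpha_a$ will then cover a generating set for $2\pp{-1}\rrrpb{k}$ as $\bar a$ varies.

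To produce $\alpha_a$, I will first use the hypothesis that $\chi$ is nontrivial to choose $t \in F^\times$ satisfying $\rho(\an{t}) = -1$ in $\sgr{k}$. Since $\bar v: F^\times \to \modtwo{\Gamma}$ is surjective and $\chi$ is nontrivial on $\modtwo{\Gamma}$, I can pick some $t_0$ with $\chi(\bar v(t_0)) = -1$ and then adjust by a unit in $U$ so that the unit-part of $t/j(\bar v(t))$ becomes a square in $k$. A short computation then gives $\rho(\pp{-1}\pf{t}) = -2\pp{-1}$ in $\sgr{k}$. For any $a \in U_v$ with $\bar a \neq 1$, Lemma \ref{lem:ap} tells me that the image of $\pp{-1}\pf{t}\gpb{a}$ in $\rrpb{F}$ already lies in $\rrbl{F}$, and Corollary \ref{cor:rbftilde} then lets me lift to some $\alpha_a \in \rbl{F}$ that differs from $\pp{-1}\pf{t}\gpb{a}$ only by an element of $\ks{1}{F}$.

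The essential remaining step is to verify that $S$ annihilates $\ks{1}{F}$ when regarded as a map into $\rrrpb{k}$, which I would do by a case analysis on the generators $\suss{1}{x}$: when $v(x)=0$, one has $S(\suss{1}{x}) = \suss{1}{\bar x} \in \ks{1}{k} \subset \kks{k}$, hence zero in $\rrrpb{k}$; when $v(x) \neq 0$, the identity $\an{-1}\bconst{k} = \bconst{k}$ makes $S(\suss{1}{x}) = 0$ on the nose. Granting this, $\sgr{F}$-linearity of $S$ yields
\[
S(\alpha_a) = \rho(\pp{-1}\pf{t}) \cdot \gpb{\bar a} = -2\pp{-1}\gpb{\bar a},
\]
as required. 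The only real subtlety is the factor of $2$ in the statement: it comes directly from $\rho(\pf{t}) = -2$, and getting rid of it would demand a more refined lifting within $\rpb{F}$ (if it is possible at all), which explains why the corollary claims only $2\pp{-1}\rrrpb{k}$ rather than $\pp{-1}\rrrpb{k}$.
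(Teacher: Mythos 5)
Your proposal is correct and follows essentially the same route as the paper: both use Lemma \ref{lem:ap} to place $\pp{-1}\pf{t}\rrpb{F}$ inside $\rrbl{F}$, Corollary \ref{cor:rbftilde} to lift back to $\rbl{F}$, the computation $S(\suss{1}{x})=0$ in $\rrrpb{k}$ to ensure $S$ kills $\ks{1}{F}$, and the observation that nontriviality of $\chi$ provides $t$ with $\rho(\pf{t})=-2$. The only cosmetic difference is that the paper argues at the level of submodules (surjectivity of the induced map $\pf{\pi}\pp{-1}\rrpb{F}\to 2\pp{-1}\rrrpb{k}$) whereas you work element-by-element with explicit lifts $\alpha_a$; and your adjustment of $t_0$ by a unit is unnecessary, since one may simply take $t=j(\gamma)$ with $\chi(\gamma)=-1$, for which the unit part is automatically trivial.
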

\begin{proof} First observe that $S_{j,\chi}$ induces a homomorphism 
\[
\rrpb{F}\to\rrrpb{k}
\]
since 
\[
S_{j,\chi}(\suss{1}{x})=
\left\{
\begin{array}{ll}
\suss{1}{\bar{x}},& v(x)=0\\
0,& v(x)\not= 0
\end{array}
\right.
\]
Suppose that $\gamma\in \Gamma$ with $\chi(\gamma)=-1$ and let $\pi:=j(\gamma)$. Since $\rrrpb{k}$ is an $\sgr{F}$-module
via the homomorphism $\rho_{j,\chi}$, $\an{\pi}$ acts on $\rrrpb{k}$ as multiplication by $-1$. Thus 
$\pf{\pi}\rrrpb{k}=2\rrrpb{k}$ and hence $S_{j,\chi}$ induces a surjective homomorphism
\[
\pf{\pi}\pp{-1}\rrpb{F}\to2\pp{-1}\rrrpb{k}.
\] 
\end{proof}
\begin{rem} As just noted, the maps $S_{j,\chi}:\rpb{F}\to\rrrpb{k}$ descend to maps $\rrpb{F}\to\rrrpb{k}$. However, 
they do not usually induce maps $\rrrpb{F}\to\rrrpb{k}$.
In fact, below we will use the homomorphisms $S_{j,\chi}$ to detect the non-triviality of $\aug{F}\bconst{F}$ for 
global and (some) local fields.
\end{rem}
\begin{cor}
Let $F$ be a field with valuation $v:F^\times\to\Gamma$ and \emph{quadratically closed} residue field $k$. 
 Let $j:\modtwo{\Gamma}\to\sq{F}$ be 
a splitting, and let $\chi:\modtwo{\Gamma}\to\mu_2$ be a \emph{nontrivial} character. Then $S_{j,\chi}$ induces 
a surjective map
\[
\xymatrix{
\rbl{F}\ar@{>>}[r]
&\rpb{k}.}
\] 
\end{cor}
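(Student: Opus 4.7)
The plan is to identify the target $\rrrpb{k}$ with the classical pre-Bloch group $\pb{k}$, apply Corollary \ref{cor:spec}, and then invoke a divisibility property of $\pb{k}$ for algebraically closed residue field.

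First I would verify the identification $\rrrpb{k}=\pb{k}$. Since $k$ is algebraically closed, $k^\times=(k^\times)^2$, so $\sq{k}$ is trivial, $\sgr{k}=\Z$, and $\aug{k}=0$. The defining relations $S_{x,y}$ of $\rpb{k}$ therefore collapse to the classical relations $R_{x,y}$, giving $\rpb{k}=\pb{k}$. Moreover, writing every $y\in k^\times$ as a square $y=z^2$ and applying Corollary \ref{cor:cocycle}(4), one finds $\suss{1}{z^2}=-\pf{z}\suss{1}{-1}=0$, so $\ks{1}{k}=0$. Combined with $\aug{k}\bconst{k}=0$, this yields $\rrrpb{k}=\pb{k}$.

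Next, since $-1$ is a square in $k$, the scalar $\an{-1}$ acts as the identity on $\rrrpb{k}$ via $\rho_{j,\chi}$, and hence $\pp{-1}=1+\an{-1}$ acts as multiplication by $2$. Corollary \ref{cor:spec} then gives the containment
\[
S_{j,\chi}(\rbl{F})\supseteq 2\pp{-1}\,\rrrpb{k}=4\,\pb{k}.
\]

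To close the argument I would invoke the classical fact that $\pb{k}$ is uniquely divisible whenever $k$ is algebraically closed (for our purposes only $2$-divisibility is needed), so that $4\,\pb{k}=\pb{k}$ and surjectivity of $S_{j,\chi}\!\res{\rbl{F}}\colon \rbl{F}\to\pb{k}$ follows. The main obstacle, should one prefer not to cite divisibility of $\pb{k}$ as a black box, is to prove $2$-divisibility of $\pb{k}$ directly: this can be carried out by exploiting the presence of square roots in $k^\times$ to rewrite a generator $\gpb{a}$ via the five-term relation in $\pb{k}$, but it is a technical side-calculation rather than a conceptual obstruction.
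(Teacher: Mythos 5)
Your proposal is correct and follows essentially the same route as the paper: identify $\rrrpb{k}$ with $\pb{k}$ using that $k$ is quadratically (indeed algebraically) closed and that $\suss{i}{x}$ and $\aug{k}\bconst{k}$ vanish, apply Corollary \ref{cor:spec} to see the image contains a fixed multiple of $\pb{k}$, and then invoke Suslin's divisibility of $\pb{k}$ for $k$ algebraically closed. The only cosmetic difference is that you track $\ks{1}{k}=0$ via $\aug{k}=0$ and Corollary \ref{cor:cocycle}(4), while the paper simply notes $\suss{i}{x}=\sus{x}=0$; both amount to the same observation.
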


\begin{proof} Since $k$ is quadratically closed, $\rpb{k}=\pb{k}$ and $\suss{i}{x}=\sus{x}=0$ for all $x$ for $i=1,2$. 
Thus $\rrrpb{k}=\pb{k}$ and $\pp{-1}\rrrpb{k}=\pp{-1}\pb{k}=2\pb{k}$. 
However, since every element of $k^\times$ is a square, $\pb{k}$ is $2$-divisible (\cite[Lemma 5.8]{sah:dupont})and thus 
$2\pp{-1}\rpb{k}=4\pb{k}=\pb{k}=\rpb{k}$.
\end{proof}

\begin{cor}\label{cor:epmrbl}
Let $F$ be a field with valuation $v:F^\times\to\Gamma$ and corresponding residue field $k$. 
Let $j:\modtwo{\Gamma}\to\sq{F}$ be 
a splitting, and let $\chi:\modtwo{\Gamma}\to\mu_2$ be a \emph{nontrivial} character. Choose $\gamma\in \Gamma$ with 
$\chi(\gamma)=-1$ and let $\pi=j(\gamma)$.

Then the map $S_{j,\chi}$ induces a surjective homomorphism of $\zhalf{\sgr{F}}$-modules
\[
\epm{\pi}\ep{-1}\left( \zhalf{\rbl{F}}\right)\to \ep{-1}\left( \zhalf{\rrrpb{k}}\right)
\]

In particular, if $-1\in (F^\times)^2$, then there is a surjective homomorphism 
\[
\epm{\pi}\left( \zhalf{\rbl{F}}\right)\to \zhalf{\rrrpb{k}}
\]
\end{cor}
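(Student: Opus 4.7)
The plan is to deduce this corollary from Corollary \ref{cor:spec} by straightforward idempotent bookkeeping after inverting $2$. By that corollary, the image of $S_{j,\chi}|_{\rbl{F}}\colon \rbl{F}\to\rrrpb{k}$ contains $2\pp{-1}\rrrpb{k}$. After tensoring with $\zhalf{\Z}$ we have $\pp{-1}=2\ep{-1}$, so $2\pp{-1}\zhalf{\rrrpb{k}}=4\ep{-1}\zhalf{\rrrpb{k}}=\ep{-1}\zhalf{\rrrpb{k}}$ since $4$ is a unit. Hence the map $S_{j,\chi}\colon \zhalf{\rbl{F}}\to\zhalf{\rrrpb{k}}$ already surjects onto $\ep{-1}\zhalf{\rrrpb{k}}$.

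Next I would compute the action of $\an{\pi}$ on $\rrrpb{k}$ through the module structure defined by $\rho_{j,\chi}$. By the very definition of $\rho_{j,\chi}$ (and because $\bar{1}=1$ in $k$),
\[
\rho_{j,\chi}(\an{\pi})=\chi(\gamma)\an{\bar{1}}=-1,
\]
so $\an{\pi}$ acts as multiplication by $-1$ on $\zhalf{\rrrpb{k}}$. It follows that the idempotent $\epm{\pi}=(1-\an{\pi})/2\in\zhalf{\sgr{F}}$ acts as the identity on $\zhalf{\rrrpb{k}}$, and therefore
\[
\epm{\pi}\ep{-1}\zhalf{\rrrpb{k}}=\ep{-1}\zhalf{\rrrpb{k}}.
\]

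To finish the first part, pick any $y\in\ep{-1}\zhalf{\rrrpb{k}}$ and a preimage $x\in\zhalf{\rbl{F}}$ with $S_{j,\chi}(x)=y$. Since $S_{j,\chi}$ is $\sgr{F}$-linear and the idempotent $\epm{\pi}\ep{-1}\in\zhalf{\sgr{F}}$ acts as the identity on $y$, we have $S_{j,\chi}(\epm{\pi}\ep{-1}x)=\epm{\pi}\ep{-1}y=y$, with $\epm{\pi}\ep{-1}x\in\epm{\pi}\ep{-1}\zhalf{\rbl{F}}$; this gives the desired surjection.

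For the ``In particular'' statement, if $-1\in(F^\times)^2$ then $\an{-1}=1$ in $\sgr{F}$ and $\ep{-1}=1$ in $\zhalf{\sgr{F}}$. Moreover $v(-1)=0$ (since $2v(-1)=v(1)=0$ and $\Gamma$ is torsion-free), so any square root $u$ of $-1$ lies in $U_v$ and reduces to a square root of $-1$ in $k$; hence $\an{-1}=1$ in $\sgr{k}$ as well, so $\ep{-1}=1$ acts as the identity on $\zhalf{\rrrpb{k}}$. The general statement therefore collapses to the surjection $\epm{\pi}\zhalf{\rbl{F}}\twoheadrightarrow\zhalf{\rrrpb{k}}$. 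The only real ``obstacle'' is purely notational: keeping track of the two commuting idempotents $\ep{-1}$ and $\epm{\pi}$ and remembering that $\pp{-1}=2\ep{-1}$ after inverting $2$; the substantive content has already been established in Corollaries \ref{cor:spec} and its precursors.
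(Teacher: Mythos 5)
Your proof is correct and follows essentially the same approach as the paper's. The paper's own proof is shorter: it observes (using Lemma \ref{lem:ap}) that $\epm{\pi}\ep{-1}\zhalf{\rrpb{F}}\subset\zhalf{\rrbl{F}}$, hence $\epm{\pi}\ep{-1}\zhalf{\rrpb{F}}=\epm{\pi}\ep{-1}\zhalf{\rrbl{F}}\cong\epm{\pi}\ep{-1}\zhalf{\rbl{F}}$ by Corollary \ref{cor:rbftilde}, and then the already-established surjectivity of the induced map $\rrpb{F}\to\rrrpb{k}$ gives the result. Your argument starts one step later, taking Corollary \ref{cor:spec} as a black box (which itself encapsulates Lemma \ref{lem:ap}), and then carries out the idempotent bookkeeping after inverting $2$; the content is the same, just repackaged. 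One small economy you could make: for the \emph{in particular} clause you do not need the observation that $-1$ reduces to a square in $k$ — once $\an{-1}=1$ in $\sgr{F}$, the element $\ep{-1}\in\zhalf{\sgr{F}}$ is literally $1$, and it is via the $\sgr{F}$-module structure $\rho_{j,\chi}$ that $\ep{-1}$ acts on $\zhalf{\rrrpb{k}}$, so both sides collapse immediately.
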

\begin{proof}
Since 
\[
\epm{\pi}\ep{-1}\left( \zhalf{\rrpb{F}}\right)\subset \zhalf{\rrbl{F}}
\]
by Lemma \ref{lem:ap}, it follows that
\[
\epm{\pi}\ep{-1}\left( \zhalf{\rrpb{F}}\right)=\epm{\pi}\ep{-1}\left( \zhalf{\rrbl{F}}\right)\cong
\epm{\pi}\ep{-1}\left( \zhalf{\rbl{F}}\right)
\]
(using Lemma \ref{lem:rbftilde})
\end{proof}

\begin{rem}\label{rem:chi} 
In general, the specialization maps $S_{j,\chi}$ depend on the choice of splitting $j$ and character $\chi$.
 
To spell this out: If $\gamma\in \Gamma$ then the $\sgr{F}$-module structure on $\rrrpb{k}$ associated 
to $S_{j,\chi}$ requires that $\an{j(\gamma)}$ acts as multiplication by $\chi(\gamma)$. If $j'$ is another splitting, 
then we will have $j(\gamma)=j'(\gamma)\cdot u$ for some $u\in U$, and the action of $\an{j(\gamma)}$ with respect to the 
module-structure associated to the pair $(j',\gamma)$ will be multipication by $\an{\bar{u}}\chi(\gamma)$. Note that
$\an{\bar{u}}\in \sgr{k}$.

We can free ourselves of the dependence on the choice of splitting $j$ 
(or the choice of uniformizer $\pi$, in the case of 
a discrete valuation) by composing $S_{j,\chi}$ with the natural surjective map 
\[
\rrrpb{k}\to\redpb{k}.
\]


Since $\sgr{k}$ acts trivially on $\pb{k}$, we get well-defined surjective specialization maps 
\[
S_\chi:\rpb{F}\to \redpb{k}
\]
which depend only on the choice of character $\chi:\modtwo{\Gamma}\to \mu_2$. 

It is important to note that although $\redpb{k}$ is a trivial $\sgr{k}$-module, the $\sgr{F}$-module structure induced 
from a character $\chi$ is not usually trivial.
 
Thus, if $\chi$ is not the trivial character and if $\pi\in\sq{F}$ satisfies $\chi(v(\pi))=-1$, then $\pi$ acts 
as multiplication by $-1$ on $\redpb{k}$ and $S_\chi$ 
induces a surjective homomorphism 
\[
\epm{\pi}\left( \zhalf{\rbl{F}}\right)\to \zhalf{\redpb{k}}
\]
\end{rem}

\section{Applications: global fields}
From the existence of  these specialization maps it follows that if $F$ is field with many valuations, then 
$\ho{3}{\spl{2}{F}}{{\Z}}_0$ must be large.  For example:
\begin{thm}\label{thm:global}
Let $F$ be a field and let $\mathcal{V}$ be a family of discrete values on $F$ satisfying 
\begin{enumerate}
\item For any $x\in F^\times$, $v(x)=0$ for all but finitely many $v\in\mathcal{V}$
\item  The map 
\[
F^\times\to\oplus_{v\in\mathcal{V}}\modtwo{\Z},\quad a\mapsto \{ v(a)\}_v
\]
is surjective.
\end{enumerate}
 Then there is a natural  surjective homomorphism 
\[
\ho{3}{\spl{2}{F}}{\zhalf{\Z}}_0\to \bigoplus_{v\in\mathcal{V}}\zhalf{\redpb{k_v}}.
\]

\end{thm}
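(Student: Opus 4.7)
The plan is to assemble, for each $v\in\mathcal{V}$, the specialization homomorphism $S_{\chi_v}\colon\rpb{F}\to\redpb{k_v}$ associated to the unique nontrivial character $\chi_v\colon\modtwo{\Z}\to\mu_2$ from the Remark following Corollary \ref{cor:epmrbl}. Each $S_{\chi_v}$ is $\sgr{F}$-linear, where $\sgr{F}$ acts on $\redpb{k_v}$ through the ring map $\rho_v\colon\sgr{F}\to\Z$ sending $\an{a}\mapsto(-1)^{v(a)}$ (using that $\sq{k_v}$ acts trivially on $\redpb{k_v}$).

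The first step is to form the product $\prod_v S_{\chi_v}$ on $\rpb{F}$ and check that its restriction to $\aug{F}\cdot\rpb{F}$ takes values in the direct sum $\bigoplus_v\redpb{k_v}$, not merely the full product. Any element of $\aug{F}\cdot\rpb{F}$ is a finite sum $\sum_i\pf{a_i}\eta_i$, and by condition (1) there are only finitely many $v$ for which some $v(a_i)$ is nonzero; for every other $v$ each $\pf{a_i}$ acts as zero on $\redpb{k_v}$, so the expression vanishes. Composing (after inverting $2$) with the identification $\ho{3}{\spl{2}{F}}{\zhalf{\Z}}_0\cong\aug{F}\zhalf{\rbl{F}}\hookrightarrow\aug{F}\zhalf{\rpb{F}}$ from Lemma \ref{lem:h3sl20} yields the sought-for homomorphism $\Psi$.

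The substantive step is surjectivity. Given a finite $T\subset\mathcal{V}$ and elements $y_w\in\zhalf{\redpb{k_w}}$ for $w\in T$, condition (2) lets me choose, for each $w\in T$, an element $\pi_w\in F^\times$ with $v(\pi_w)\equiv\delta_{vw}\pmod 2$ for every $v\in\mathcal{V}$ (lift the indicator vector $(\delta_{vw})_v\in\bigoplus_v\modtwo{\Z}$). Through $\rho_v$ the idempotent $\epm{\pi_w}\in\zhalf{\sgr{F}}$ then acts as the identity on $\redpb{k_w}$ and as zero on $\redpb{k_v}$ for every $v\neq w$. The surjection $\epm{\pi_w}\zhalf{\rbl{F}}\twoheadrightarrow\zhalf{\redpb{k_w}}$ in the Remark following Corollary \ref{cor:epmrbl} then provides $\xi_w\in\zhalf{\rbl{F}}$ with $S_{\chi_w}(\epm{\pi_w}\xi_w)=y_w$, and $\xi:=\sum_{w\in T}\epm{\pi_w}\xi_w$ lies in $\aug{F}\zhalf{\rbl{F}}$ and maps to the prescribed tuple.

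The main obstacle I anticipate is cross-contamination: naively summing the local witnesses $\epm{\pi_w}\xi_w$ could leave stray components at places $v\neq w$. Condition (2) is exactly the input that eliminates this -- arranging the parity pattern of $v(\pi_w)$ so that $\epm{\pi_w}$ annihilates the $v$-component whenever $v\neq w$ -- while condition (1) is what ensures the image lands in the direct sum in the first place. Once both are in hand, the theorem reduces to the component-wise surjections already established in Section \ref{sec:spec}.
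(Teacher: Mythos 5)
Your argument is correct and is essentially the paper's own proof: the same specialization maps $S_v$ (corresponding to the nontrivial character on $\modtwo{\Z}$), the same use of condition (1) to land in the direct sum by observing that $\pf{a}$ acts as zero on $\redpb{k_v}$ whenever $v(a)$ is even, and the same use of condition (2) to manufacture idempotents $\epm{\pi_w}$ that isolate a single place. The only cosmetic difference is that the paper first picks $x_v$ with $S_v(x_v)=y_v$ and then multiplies by $\epm{\pi_v}$, whereas you apply the surjection onto $\epm{\pi_w}\zhalf{\rbl{F}}$ directly; these amount to the same thing once one notes $\epm{\pi_v}$ acts as the identity on $\redpb{k_v}$.
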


\begin{proof}
We have
\[
\ho{3}{\spl{2}{F}}{\zhalf{\Z}}_0=\zhalf{\rbl{F}_0}=\aug{F}\zhalf{\rbl{F}}=\sum_{a\in \sq{F}}\epm{a}\zhalf{\rbl{F}}
\]
by Lemma \ref{lem:h3sl20}.

  We denote by $S_{v,-1}$ the specialization map $\rpb{F}\to\redpb{k_v}$ corresponding to the character $\epsilon=-1$ 
(see Remark \ref{rem:chi}). 

Note that if $a\in F^\times$ with $v(a)\equiv 0\pmod{2}$, then for any 
$y\in \rpb{F}$, $S_v(\epm{a}y)=\epm{a}S_v(y)=0$ since 
$\an{a}$ acts trivially on $\pb{k_v}$. 
Thus, for any $a\in F^\times$, $y\in \rpb{F}$, we have $S_{v,-1}(\epm{a}y)=0$ for almost 
all $v$. It follows that for any $x\in \zhalf{\aug{F}\rbl{F}}$, $S_{v,-1}(x)=0$ for almost all $v\in\mathcal{V}$. 

Thus the maps $S_{v,-1}$ induce a well-defined $\zhalf{\sgr{F}}$-homomorphism
\[
S:\zhalf{\rbl{F}}_0\to\oplus_{v}\zhalf{\redpb{k_v}},\qquad  x\mapsto \{ S_{v,-1}(x)\}_v.
\]

Finally, let $y=\{ y_v\}_v\in \oplus_v\zhalf{\redpb{k_v}}$.  
Let $T=\{ v\in\mathcal{V}\ |\ y_v\not= 0\}$. For each $v\in T$, choose $x_v\in \zhalf{\rbl{F}}$ with $S_{v,-1}(x_v)=y_{v}$. 

For each $v\in T$, choose $\pi_v\in F^\times$ satisfying $w(\pi_v)\equiv\delta_{w,v}\pmod{2}$ for all $w\in\mathcal{V}$. 
Then 
\[
y=S\left(\sum_{v\in T}\epm{\pi_v}\cdot x_v\right).
\] 
\end{proof}

Since $\kl{k}$ is a $2$-torsion group, we have 
$\zhalf{\redpb{k}}=\zhalf{\pb{k}}$ for a finite field $k$, and we deduce:

\begin{cor}\label{exa:global}
Let $F$ be a global field and let $S$ be a finite set of places of $F$. 
Let 
\[
\mathcal{O}_S=\{ a\in F^\times\ |\ v(a)\geq 0\mbox{ for all } v\not\in S\}\cup \{ 0\}.
\]
Suppose  that $\modtwo{\cl{\mathcal{O}_S}}=0$.
Let $\mathcal{V}$ be the set of all finite places of $F$ not in $S$. 
Then there is  
a surjective homomorphism of $\sgr{F}$-modules
\[
\ho{3}{\spl{2}{F}}{\zhalf{\Z}}_0\to \bigoplus_{v\in\mathcal{V}}\zhalf{\pb{k_v}}.
\]
 
Here, if $\pi_v$ is a uniformizer for $v$, then the square 
class of $\pi_v$ acts as $-1$ on the factor $\zhalf{\pb{k_v}}$ on the right. 
\end{cor}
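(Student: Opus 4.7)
The plan is to deduce this directly from Theorem \ref{thm:global} applied to the family $\mathcal{V}$ of all finite places of $F$ outside $S$, and then identify the target $\bigoplus_v \zhalf{\redpb{k_v}}$ with $\bigoplus_v \zhalf{\bl{k_v}}$.

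First I verify the two hypotheses of Theorem \ref{thm:global}. Hypothesis (1) is automatic in a global field: each $a \in F^\times$ has nonzero valuation at only finitely many finite places. For hypothesis (2), the surjectivity of the parity map $F^\times \to \bigoplus_{v \in \mathcal{V}} \modtwo{\Z}$, I would invoke the standard exact sequence
\[
\mathcal{O}_S^\times \to F^\times \to \bigoplus_{v \in \mathcal{V}} \Z \to \cl{\mathcal{O}_S} \to 0
\]
and tensor with $\Z/2$: the cokernel of the parity map is a quotient of $\modtwo{\cl{\mathcal{O}_S}}$, which vanishes by hypothesis. Concretely, each prime class $[\mathfrak{p}_v]$ is a square, say $[\mathfrak{p}_v] = [\mathfrak{a}]^2$, and any generator $a_v$ of the principal ideal $\mathfrak{p}_v \mathfrak{a}^{-2}$ has odd valuation at $v$ and even valuation at every other place of $\mathcal{V}$.

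Given hypotheses (1) and (2), Theorem \ref{thm:global} produces an $\sgr{F}$-module surjection $\ho{3}{\spl{2}{F}}{\zhalf{\Z}}_0 \to \bigoplus_{v \in \mathcal{V}} \zhalf{\redpb{k_v}}$. To finish I identify $\zhalf{\redpb{k}}$ with $\zhalf{\bl{k}}$ for a finite field $k$. By definition $\redpb{k} = \pb{k}/\kl{k}$, and $\kl{k}$ is generated by the $2$-torsion symbols $\sus{x}$, so $\zhalf{\redpb{k}} = \zhalf{\pb{k}}$; moreover $\pb{k}/\bl{k}$ injects into $\asym{2}{\Z}{k^\times}$, which is $2$-torsion since $k^\times$ is cyclic, yielding $\zhalf{\pb{k}} = \zhalf{\bl{k}}$. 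The asserted action of $\an{\pi_v}$ as $-1$ on the $v$-th factor is built into the specialization map $S_v$ of Example \ref{exa:dv} coming from the nontrivial character $\epsilon = -1$ on $\modtwo{\Z}$, which is the map used in the proof of Theorem \ref{thm:global}.

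I do not anticipate any substantive obstacle: the only real content is the translation of the class group hypothesis $\modtwo{\cl{\mathcal{O}_S}} = 0$ into the parity surjectivity condition, and the rest is bookkeeping using the already established results.
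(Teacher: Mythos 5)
Your proposal is correct and follows essentially the same route as the paper: apply Theorem \ref{thm:global} to $\mathcal{V}$ (the hypothesis $\modtwo{\cl{\mathcal{O}_S}}=0$ giving the required surjectivity of the parity map via the standard ideal-theoretic exact sequence for $\mathcal{O}_S$), and then identify $\zhalf{\redpb{k_v}}$ with $\zhalf{\bl{k_v}}$ for the finite residue fields, exactly as the paper does in the sentence preceding the corollary.
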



\begin{rem} 
Note that it follows that the groups $\ho{3}{\spl{2}{F}}{\Z}$ are not finitely generated, for any global field $F$. 
By contrast, when $F$ is a global field, 
the groups $\ho{3}{\spl{3}{F}}{\Z}\cong\kind{F}$ are well-known to be finitely generated. 
Furthermore the stabilization map 
$\ho{3}{\spl{2}{F}}{\Z}\to\ho{3}{\spl{3}{F}}{\Z}$ is known to be surjective in this case, by the 
results of \cite{hutchinson:tao2}.
\end{rem}

\begin{rem} Observe that in the case $F=\Q$ the short exact sequence 
\[
0\to \ho{3}{\spl{2}{\Q}}{\zhalf{\Z}}_0\to \ho{3}{\spl{2}{\Q}}{\zhalf{\Z}} \to \zhalf{\kind{\Q}}\to 0
\]
is split, since the composite 
\[
\ho{3}{\spl{2}{\Z}}{\zhalf{\Z}}\to \ho{3}{\spl{2}{\Q}}{\zhalf{\Z}}\to \zhalf{\kind{\Q}}=\zhalf{\bl{\Q}}
=\Z\cdot \cconst{\Q}
\]
is an isomorphism (of groups of order $3$). Thus there is  a natural surjective homomorphism 
\[
\xymatrix{
\ho{3}{\spl{2}{\Q}}{\zhalf{\Z}}\ar@{>>}[r]
&
\zhalf{\kind{\Q}}\oplus\left( \bigoplus_{p}\zhalf{\pb{\F{p}}}\right)\\
}
\]
\end{rem}
When $k$ is algebraically closed,  $\redpb{k}=\pb{k}=\rpb{k}$ and $\pb{k}$ is 
uniquely divisible -- i.e. a $\Q$-vector space -- by the results of Suslin (\cite{sus:bloch}).

\begin{cor}\label{exa:algv}
Let $k$ be an algebraically closed field and let $X$ be an algebraic variety  over $k$, with 
rational function field $k(X)$. Suppose that $X$ is regular in codimension $1$ and that $\modtwo{\cl{X}}=0$.
Let $V$ be the set of codimension $1$ subvarieties.

The theorem says that there is a surjective homomorphism of $\sgr{k(X)}$-modules
\[
\ho{3}{\spl{2}{k(X)}}{\Q}_0\to \oplus_{v\in V}{\pb{k}}
\]
\end{cor}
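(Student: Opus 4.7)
The plan is to apply Theorem \ref{thm:global} directly to $F = k(X)$ with $\mathcal{V} = V$. Because $X$ is assumed regular in codimension $1$, each $D \in V$ corresponds to a discrete valuation ring $\ntr{X,D}$, giving a discrete valuation $v_D$ on $k(X)$ with residue field $k(D)$, the function field of $D$.

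The two hypotheses of Theorem \ref{thm:global} translate into standard divisor-theoretic facts on $X$. Condition (1) is just the statement that a nonzero rational function has a well-defined principal divisor supported on finitely many codimension-$1$ subvarieties. Condition (2) asks for surjectivity of $k(X)^\times \to \bigoplus_{D \in V} \modtwo{\Z}$, $f \mapsto \{v_D(f)\}_D$: given a finitely supported tuple $(n_D)$ with $n_D \in \{0,1\}$, the reduced divisor $E := \sum_D n_D D$ has class $[E] \in \cl{X}$ divisible by $2$ by hypothesis, so $E - 2F = \mathrm{div}(f)$ for some $f \in k(X)^\times$ and some divisor $F$, and this $f$ has $v_D(f) \equiv n_D \pmod 2$ for all $D$.

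Theorem \ref{thm:global} then produces a surjection
\[
\ho{3}{\spl{2}{k(X)}}{\zhalf{\Z}}_0 \twoheadrightarrow \bigoplus_{D \in V} \zhalf{\redpb{k(D)}},
\]
from which one passes to $\Q$-coefficients (the transition is benign because tensoring with $\Q$ kills all $2$-torsion). To identify the target with $\bigoplus_{D \in V} \pb{k}$ as stated, I would use Suslin's divisibility result (recalled in the preamble to the corollary) that $\pb{k}$ is already a $\Q$-vector space, together with a further chain of specializations: for each $D$, iteratively specialize $\redpb{k(D)} \otimes \Q$ through codimension-$1$ subvarieties of $D$ of decreasing dimension, terminating at a $k$-rational point, whose residue field is $k$ itself by the Nullstellensatz (since $k$ is algebraically closed). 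Each such step is provided by Theorem \ref{thm:spec}, and because $\pb{k}$ is divisible the composition remains surjective after $\otimes \Q$.

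The hard part will be the last identification. One must check that the iterated specialization is well-defined, that surjectivity is preserved at every stage after tensoring with $\Q$, and that the composition respects the $\sgr{k(X)}$-module structure demanded by the statement. The unique divisibility of $\pb{k}$ trivializes the $\Q$-coefficient issue, but the bookkeeping of module structures and the cleanness of the chain of specializations is the main subtlety; once that is in place, assembling the individual surjections into the direct-sum map proceeds exactly as in the proof of Theorem \ref{thm:global}.
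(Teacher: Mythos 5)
Your first half is exactly what the paper intends and is correct. You identify the family of discrete valuations $v_D$ attached to codimension-$1$ subvarieties (using regularity in codimension $1$), note that condition (1) of Theorem \ref{thm:global} is the finite support of principal divisors, and verify condition (2) by lifting a reduced divisor $E = \sum n_D D$ to a rational function via the hypothesis $\modtwo{\cl{X}} = 0$; this gives the surjection onto $\bigoplus_{D} \zhalf{\redpb{k_{v_D}}}$ and the passage to $\Q$-coefficients is indeed benign since $\kl{k_v}$ is $2$-torsion.

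The second half of your proposal does not match the paper, and I do not believe it can be made to work as described. The paper treats the corollary as a \emph{direct} consequence of Theorem \ref{thm:global} plus the preceding remark that $\redpb{k} = \pb{k} = \rpb{k}$ is a $\Q$-vector space for algebraically closed $k$. There is no iterated specialization in the paper. Your proposed iteration runs into two concrete obstructions. First, the specialization maps $S_{j,\chi}$ are $\sgr{F}$-module homomorphisms $\rpb{F} \to \rrrpb{k}$; they are not defined on $\redpb{F}$, and the paper explicitly warns that they do not even descend to $\rrrpb{F} \to \rrrpb{k}$. Second, and more fatally, for a nontrivial character $\chi$ the uniformizer $\pi$ acts on the target as $-1$, so upon passing to $F^\times$-coinvariants (which is what producing a map out of $\redpb{F} = \rrrpb{F}_{F^\times}$ would require) the image is annihilated by $2$; after $\otimes \Q$ the iterated map would be zero. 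So your chain of specializations $\redpb{k(D)} \otimes \Q \to \cdots \to \pb{k}$ collapses rationally.

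There is also a point worth flagging about the statement itself. For $\dim X \geq 2$ the residue field at a prime divisor $D$ is $k(D)$, not $k$, and $k(D)$ is not algebraically closed, so there is no canonical surjection $\pb{k(D)} \to \pb{k}$; the only canonical map goes the other way. Read literally, the corollary would need the further argument you were attempting (and which fails). What Theorem \ref{thm:global} actually delivers, after $\otimes \Q$, is a surjection onto $\bigoplus_{D \in V} \zhalf{\pb{k(D)}} \otimes \Q$; the target $\pb{k}$ as written is only accurate when the residue fields equal $k$, i.e.\ when $X$ is a curve, which is presumably the case the author had in mind when invoking the remark. You should either restrict to curves or keep $\pb{k_v}$ in the target, rather than invent an iterated specialization.
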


\begin{rem} Note that this shows that $\ho{3}{\spl{2}{F}}{\Z}_0$ may have a large torsion-free part. 
\end{rem}

For any global field $F$ we have 
\[
\ho{3}{\spl{2}{F}}{\Z}=\varinjlim_{S}\ho{3}{\spl{2}{\mathcal{O}_S}}{\Z}
\]
(where $S$ varies over all finite sets of places). It is natural to inquire to what extent the results described above 
for fields may extend to rings of $S$-integers $\mathcal{O}_S$ (at least when $S$ is large). For example, it follows from 
the equation just quoted that the square classes of $\mathcal{O}_S^\times$ must act nontrivially on the group  
$\ho{3}{\spl{2}{\mathcal{O}_S}}{\Z}$ for large $S$.

As noted in the introduction, there are very few explicit calculations of the homology or cohomology of $S$-arithmetic 
groups such as $\spl{2}{\mathcal{O}_S}$, in spite of  their great interest in geometry and number theory. 
(Over number fields, the homology groups are known to be finitely-generated 
(\cite{borel:serre}), and 
calculation of homology and cohomology are essentially equivalent by the universal coefficient theorem.) For example,
the cohomology of the groups $\spl{2}{\Z}$ and $\spl{2}{\Z[1/p]}$ ($p$ a prime) are known (see \cite{adem:naffah}) 
and there has been extensive work on the calculation of the integral cohomology of the groups $\spl{2}{\mathcal{O}}$ 
where $\mathcal{O}$ is the ring of integers in an imaginary quadratic field (see \cite{rahm:fuchs}, 
\cite{rahm:bianchi},\cite{sengun:bianchi}). 
Little is known, on the other hand,  
about the integral (co)homology of $\spl{2}{\Z[1/N]}$ when $N$ has at least $2$  prime factors 
(see 
\cite{hesselmann} and \cite{kuhnlein} for some partial results). 

Here we give an example of how the module structure and specialization maps developed above 
can allow us to construct nontrivial homology classes in the groups $\ho{3}{\spl{2}{\mathcal{O}_S}}{\Z}$:

Let 
\[
t:=\matr{-1}{1}{-1}{0}\in \spl{2}{\Z}
\]
and let $G$ be the cyclic group of order $3$ generated by $t$.
For any ring $A$, we will let  $\one:=\one_A$ 
denote the image of the generator of $\ho{3}{G}{\Z}$ in $\ho{3}{\spl{2}{A}}{\Z}$ (see Remark \ref{rem:h3sl2z} above). 

Thus if $A$ is a subring of the field $F$, 
$\one_A$ maps to $-\cconst{F}$ under the map $\ho{3}{\spl{2}{A}}{\Z}\to\rbl{F}$.  


\begin{thm} 
\label{thm:3tors}
Let $F$ be a number field. Let $S$ be a nonempty set of finite primes of $F$ such that
$\modtwo{\cls{S}{F}}=0$, where $\cls{S}{F}$ denotes the subgroup of $\cl{\mathcal{O}_F}$ generated by the primes 
of $S$. 

Then there is a natural surjective map of $\F{3}[\modtwo{\mathcal{O}_S^\times}]$-modules 
\[
\ptor{\ho{3}{\spl{2}{\mathcal{O}_S}}{\Z}}{3}\to \oplus_{v\in S}\ptor{\pb{k_v}}{3}.
\]
\end{thm}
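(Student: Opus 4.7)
The plan is to construct the map as a composition of natural homomorphisms and to establish surjectivity by exhibiting explicit preimages built from the element $\one_{\mathcal{O}_S}$ together with the $\sgr{\mathcal{O}_S}$-module structure on $\ho{3}{\spl{2}{\mathcal{O}_S}}{\Z}$.

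First I would form the $\sgr{\mathcal{O}_S}$-equivariant composition
\[
\ho{3}{\spl{2}{\mathcal{O}_S}}{\Z} \longrightarrow \ho{3}{\spl{2}{F}}{\Z} \longrightarrow \rbl{F} \xrightarrow{\;(S_v)_{v \in S}\;} \bigoplus_{v \in S} \redpb{k_v},
\]
where the first arrow comes from $\mathcal{O}_S \hookrightarrow F$, the second is the map of Theorem \ref{thm:main} (available since $F$ is infinite), and each $S_v$ is the specialization homomorphism attached to the nontrivial character on $\modtwo{\Z}$, as in the remark following Corollary \ref{cor:epmrbl}, so that it depends only on $v$ and not on a choice of uniformizer.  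Upon restriction to $3$-torsion the target becomes $\bigoplus_{v \in S}\ptor{\bl{k_v}}{3}$: indeed $\kl{k_v}$ and the image of $\lambda$ in $\asym{2}{\Z}{k_v^\times}$ are both $2$-torsion, so $\bl{k_v}$, $\pb{k_v}$ and $\redpb{k_v}$ have canonically isomorphic $3$-primary components.  Functoriality in $(F,S)$ of all the ingredients makes the resulting $\F{3}[\modtwo{\mathcal{O}_S^\times}]$-linear map natural.

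For surjectivity I would use the class $\one=\one_{\mathcal{O}_S} \in \ho{3}{\spl{2}{\mathcal{O}_S}}{\Z}$ from Remark \ref{rem:h3sl2z}, which is $3$-torsion and whose image in $\rbl{F}$ is $-\cconst{F}$.  The hypothesis $\modtwo{\cls{S}{F}}=0$ lets me write, for each $v \in S$, $[v]=2[\mathfrak{a}]$ in $\cls{S}{F}$ with $\mathfrak{a}$ supported on $S$; then $v\cdot\mathfrak{a}^{-2}=(\pi_v)$ is principal with $\pi_v \in \mathcal{O}_S^\times$, $v(\pi_v)$ odd and $w(\pi_v) \in 2\Z$ for every $w \in S\setminus\{v\}$.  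The element $\pf{\pi_v}\cdot\one$ lies in $\ptor{\ho{3}{\spl{2}{\mathcal{O}_S}}{\Z}}{3}$, and by $\sgr{F}$-linearity its image in the $w$-th factor is $-\pf{\pi_v}\cconst{k_w}$.  For $w \neq v$, $\an{\pi_v}$ acts on $\redpb{k_w}$ as $+1$ (since $w(\pi_v)$ is even and $k_w^\times$ acts trivially on $\redpb{k_w}$), so this coordinate vanishes; for $w=v$, $\an{\pi_v}$ acts as $-1$, giving $-\pf{\pi_v}\cconst{k_v}=2\cconst{k_v}=-\cconst{k_v}$ in $\ptor{\bl{k_v}}{3}$.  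Whenever this latter group is non-zero it is cyclic of order $3$, and $\cconst{k_v}$ is a generator: by Remark \ref{rem:h3sl2z} applied inside $k_v$ it is the image of the generator of $\ho{3}{C_3}{\Z}=\Z/3$ for a $C_3 \subset K \subset \spl{2}{k_v}$ with $K$ the cyclic subgroup of order $\zzhalf{(q_v+1)}$ of Lemma \ref{lem:arxivcplx}.  Summing over $v \in S$ then exhausts the direct sum.

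The main substantive point is the class-field-theoretic extraction of the $\pi_v$; this is the one place where $\modtwo{\cls{S}{F}}=0$ is used, via the standard exact sequence $\mathcal{O}_S^\times \to \bigoplus_{v \in S}\Z \to \cl{\mathcal{O}_F} \to \cl{\mathcal{O}_S} \to 0$.  The remaining verifications should be routine: the identity $S_v(\cconst{F})=\cconst{k_v}$ follows by choosing a representative $D(x)$ for $\cconst{F}$ whose reduction mod $v$ lies in $k_v\setminus\{0,1\}$ (possible whenever $|k_v|\geq 3$, while the case $k_v=\F{2}$ is handled directly since $\bl{\F{2}}=\Z/3$ is generated by $\bconst{\F{2}}$ and $\cconst{\F{2}}=2\bconst{\F{2}}$ remains a generator), and the natural identification $\ptor{\bl{k_v}}{3}=\ptor{\redpb{k_v}}{3}$ is purely a consequence of the $2$-primary nature of the discrepancies.
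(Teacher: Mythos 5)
Your proof is correct and follows essentially the same strategy as the paper: construct the map by composing $\ho{3}{\spl{2}{\mathcal{O}_S}}{\Z}\to\rbl{F}$ with the specialization homomorphisms, then establish surjectivity using the order-$3$ class $\one_{\mathcal{O}_S}$ (whose image is $-\cconst{F}$, hence $-\cconst{k_v}$ after specialization) multiplied by $S$-units of prescribed valuation parity produced from the hypothesis $\modtwo{\cls{S}{F}}=0$. Your $\pf{\pi_v}\cdot\one$ agrees with the paper's $\epm{u_w}\cdot\one_{\mathcal{O}_S}$ on $3$-torsion since $\pf{\pi_v}=-2\epm{\pi_v}$ and $-2\equiv 1\pmod 3$, so the two write-ups differ only in presentation, with yours spelling out a bit more explicitly the class-field-theoretic step and the role of the nontrivial character $\chi$.
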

\begin{proof} For $v\in S$, let  $R_v$ denote the composite
\[
\xymatrix{
\ho{3}{\spl{2}{\mathcal{O}_S}}{\Z}\ar[r]
&\ho{3}{\spl{2}{F}}{\Z}\ar[r]
&\rbl{F}\ar[r]^-{S_{v,-1}}
& \redpb{k_v}.}
\]

Observe that the associated action of $\mathcal{O}_S^\times$ on $\redpb{k_v}$ is decribed as follows: For 
$a\in \mathcal{O}_S^\times$, $\an{a}$ acts as multiplication by $(-1)^{v(a)}$. 

By \cite{hut:arxivcplx11}, section 7, $\ptor{\redpb{k}}{3}=\ptor{\pb{k}}{3}$ is generated by 
$\cconst{k}=2\bconst{k}$ for any finite field $k$. 

Our hypothesis on $\cls{S}{F}$ guarantees that the map
\[
\mathcal{O}_S^\times\to\oplus_{v\in S}\Z/2 
\]
is surjective, and thus for each $w\in S$, there exists $u_w\in \mathcal{O}_S^\times$ satisfying 
$v(u_w)\equiv\delta_{v,w}\pmod{2}$ for all $v\in S$. 

Finally, for each $w\in S$,  let 
\[
\one_{w,S}:=\epm{u_w}\cdot\one_{\mathcal{O}_S}\in \ptor{\ho{3}{\spl{2}{\mathcal{O}_S}}{\Z}}{3}
\]
(see Remark \ref{rem:h3sl2z}) and hence  
\[
R_v(\one_{w,S})=-\epm{u_w}\cdot \cconst{k_v}=-\delta_{v,w}\cconst{k_v}.
\]
\end{proof}. 

\begin{cor} Let $F$ be a number field not containing $\zeta_3$. Let $S$ be a finite set of primes of $F$ for which 
$\modtwo{\cls{S}{F}}=0$. Let $r(S)=\card{\{v\in S\ |\ \card{k_v}\equiv -1\pmod{3}\}}$. Then
\[
3\mbox{-rank}\left(\ho{3}{\spl{2}{\mathcal{O}_S}}{\Z}\right)\geq 1+r(S). 
\]
\end{cor}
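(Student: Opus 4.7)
The plan is to combine the surjection of the previous theorem with one additional $\F{3}$-linearly independent detection, namely the canonical surjection $\rbl{F}\to \bl{F}$ of Lemma~\ref{lem:blochinf}. Concretely, I would form the augmented map
\[
\Psi:\ \ptor{\ho{3}{\spl{2}{\mathcal{O}_S}}{\Z}}{3}\longrightarrow \ptor{\bl{F}}{3}\oplus \bigoplus_{v\in S}\ptor{\bl{k_v}}{3}
\]
whose first coordinate is the composition $\ho{3}{\spl{2}{\mathcal{O}_S}}{\Z}\to\ho{3}{\spl{2}{F}}{\Z}\to\rbl{F}\to\bl{F}$ and whose remaining coordinates are the maps $R_v$ from the preceding proof. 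I then aim to verify that the $1+r(S)$ elements $\{\one_{\mathcal{O}_S}\}\cup\{\one_{w_i,S}\}$, where $w_i$ enumerates the primes of $S$ satisfying $\card{k_{w_i}}\equiv -1\pmod 3$, have $\F{3}$-linearly independent images under $\Psi$.

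By Remark~\ref{rem:h3sl2z}, $\one_{\mathcal{O}_S}$ maps to $-\cconst{F}\in \rbl{F}$, and hence to $-2C\in\bl{F}$ where $C:=\gpb{x}+\gpb{1-x}$. Since $\sq{F}$ acts trivially on $\bl{F}$, the operator $\epm{u_w}=(1-\an{u_w})/2$ acts as $0$ on $\bl{F}$, so $\one_{w_i,S}=\epm{u_{w_i}}\cdot\one_{\mathcal{O}_S}$ maps to $0$ there. Combined with the computations $R_v(\one_{\mathcal{O}_S})=-\cconst{k_v}$ and $R_v(\one_{w_i,S})=-\delta_{v,w_i}\cconst{k_v}$ extracted from the preceding proof, the matrix of images under $\Psi$ is block lower triangular. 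Linear independence therefore reduces to the nonvanishing of the diagonal entries $-2C\in \ptor{\bl{F}}{3}$ and each $-\cconst{k_{w_i}}\in \ptor{\bl{k_{w_i}}}{3}$. The latter nonvanishings are immediate from Lemma~\ref{lem:arxivcplx} and the final discussion of Section~\ref{sec:rpf}, since $\card{k_{w_i}}\equiv -1\pmod 3$ forces $3\mid\card{\bl{k_{w_i}}}$.

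The main obstacle is showing $2C\neq 0$ in $\bl{F}$ under the hypothesis $\zeta_3\notin F$. My approach uses an auxiliary specialization: because $\zeta_3\notin F$, Chebotarev's density theorem applied to the nontrivial quadratic extension $F(\zeta_3)/F$ furnishes a finite prime $v_0$ of $F$ whose residue field satisfies $\card{k_{v_0}}\equiv -1\pmod 3$. Fixing a uniformizer $\pi_0$ at $v_0$ and applying Theorem~\ref{thm:spec} with the trivial character $\chi=1$, the resulting map $S_{\pi_0,1}:\rpb{F}\to \rrrpb{k_{v_0}}$ sends $\cconst{F}$ to $\cconst{k_{v_0}}$, which is nonzero in $\ptor{\rrrpb{k_{v_0}}}{3}\cong\Z/3$ by the preceding discussion. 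Since $\sq{k_{v_0}}$ acts trivially on $\cconst{k_{v_0}}\in \rbl{k_{v_0}}\cong \bl{k_{v_0}}$, the image $S_{\pi_0,1}(\aug{F}\cconst{F})$ vanishes, so $\cconst{F}$ is nonzero in the coinvariant quotient $\rbl{F}_{F^\times}$. Lemma~\ref{lem:blochinf} then yields $2C\neq 0$ in $\ptor{\bl{F}}{3}$, which is the final input needed to conclude that $\Psi$ has image of $\F{3}$-rank at least $1+r(S)$.
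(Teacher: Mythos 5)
Your strategy is correct and closely parallels the paper's, but repackages it: where the paper exhibits a nonzero element $\left(\prod_{w\in S}\ep{u_w}\right)\one_{\mathcal{O}_S}$ in the kernel of the surjection $\oplus_v R_v$ and detects it via a trivial-character specialization $S_{v,1}$ at an inert prime, you instead adjoin the coordinate $\rbl{F}\to\bl{F}$ to the detection map $\Psi$ and read off linear independence from a triangular matrix. Both routes hinge on the same underlying input, namely that $\cconst{F}$ (equivalently $2C$) has order $3$ when $\zeta_3\notin F$, and both ultimately use the specialization at an inert prime to see this. Your block-triangular packaging is somewhat cleaner, but it is essentially the same proof in different clothing.

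There is, however, a gap in the step where you argue that $2C\neq 0$. You write that since $S_{\pi_0,1}(\aug{F}\cconst{F})=0$, it follows that $\cconst{F}$ is nonzero in the coinvariants $\rbl{F}_{F^\times}=\rbl{F}/\aug{F}\rbl{F}$. This does not follow: $\aug{F}\cconst{F}$ is only a small piece of $\aug{F}\rbl{F}$, and the vanishing of $S_{\pi_0,1}$ on the former says nothing about the latter. The correct argument is to pass to coinvariants on the \emph{target}: post-compose $S_{\pi_0,1}:\rpb{F}\to\rrrpb{k_{v_0}}$ with the projection to $\redpb{k_{v_0}}=\rrrpb{k_{v_0}}_{k_{v_0}^\times}$. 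Because the character is trivial and units act through $\sq{k_{v_0}}$, the resulting $\sq{F}$-action on $\redpb{k_{v_0}}$ is trivial, so this composite factors through $\pb{F}=\rpb{F}_{F^\times}$. The induced map $\pb{F}\to\redpb{k_{v_0}}$ sends $2C$ (the image of $\cconst{F}$) to the image of $\cconst{k_{v_0}}$, which is nonzero since $q_{v_0}\equiv-1\pmod{3}$ and $\zhalf{\redpb{k_{v_0}}}\cong\zhalf{\bl{k_{v_0}}}$. This gives $2C\neq 0$ in $\pb{F}$, hence in $\bl{F}$, directly; note that you do not actually need Lemma~\ref{lem:blochinf} here. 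With this repair the proposal is complete.

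Two small points worth being explicit about: (i) the $\F{3}$-independence computation only sees the coordinates indexed by $\bl{F}$ and by those $v\in S$ with $q_v\equiv-1\pmod 3$; for the remaining $v\in S$ the group $\ptor{\bl{k_v}}{3}$ is trivial and contributes nothing, which is why the matrix is genuinely $(1+r(S))\times(1+r(S))$. (ii) The inert prime $v_0$ can and should be taken outside $S$, so there is no interaction with the rest of the argument.
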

\begin{proof}
The $3$-rank of $\oplus_{v\in S}\ptor{\pb{k_v}}{3}$ is $r(S)$ 
since $\ptor{\pb{k_v}}{3}=\ptor{\bl{k_v}}{3}$, and 
$\bl{k_v}$ is cyclic of order $(\card{k_v}+1)/2$ or $\card{k_v}+1$  by the results of 
\cite{hut:arxivcplx11}. 

On the other hand, letting $u_w$ be as in the proof of Theorem \ref{thm:3tors}, the element 
\[
\Bbb{D}_+:=\left(\prod_{w\in S}\ep{u_w}\right)\one_{\mathcal{O}_S}\in \ptor{\ho{3}{\spl{2}{\mathcal{O}_S}}{\Z}}{3}
\]
lies in the kernel of the map 
\[
\oplus_vR_v:\ptor{\ho{3}{\spl{2}{\mathcal{O}_S}}{\Z}}{3}\to \oplus_{v\in S}\ptor{\bl{k_v}}{3}
\]

However,  for any place $v$ of the field $F$, the image of $\Bbb{D}_+$ in $\rbl{F}$ maps to 
$-\cconst{k_v}$ under the specialization map $S_{v,1}:\rbl{F}\to \redpb{k_v}$ corresponding to the trivial character 
$\chi=1$. If we choose $v$ for which $3$ divides $\card{k_v}+1$ then
$\cconst{k_v}$ has order $3$. It follows that $\Bbb{D}_+$ has order $3$. 

\end{proof}

\section{The third homology of $\mathrm{SL}_2$ of local fields}\label{sec:local}
In this section, we use the properties of the refined Bloch group to calculate $\ho{3}{\spl{2}{F}}{\Z}$ up 
to $2$-torsion 
 when $F$ is a local field -- i.e. is complete with respect to discrete value -- 
with finite residue field of odd characteristic (Theorem \ref{thm:local}).

\subsection{Preliminary results}\label{sec:localprelim}
We will begin by recalling some of the relevant facts about structure of the multiplicative group of a local field.

Let $F$ be a field with discrete value $v$ and residue field $k$. Let $\pi$ be a uniformizer. Then the homomorphism
$v:F^\times\to \Z$ splits and we have
\[
F^\times=U\cdot \pi^{\Z}\cong U\times \Z.
\]
Thus 
\[
\sq{F}\cong U/U^2\times \modtwo{\Z}.
\]

Now suppose that $F$ is complete with respect to  $v$ and that 
$2\in U$ (i.e. $2\not= 0$ in $k$). Then Hensel's Lemma (\cite[Chapter II, 4.6]{neukirch:ant}) implies that, 
for any $u\in U$, if $\bar{u}\in (k^\times)^2$ then $u\in U^2$. In particular, in this case $U_1=U_1^2$ and  
 $U/U^2\cong k^\times/(k^\times)^2=\sq{k}$. 

It follows that  if $F$ is complete with respect to $v$ and if $k$ is finite of odd order, then 
\[
\sq{F}\cong \sq{k}\times \Z/2
\]   
has order $4$ with generators $\an{\pi}$ and $\an{u}$ where $u$ is any nonsquare unit.

Suppose that $L/F$ is an extension of local fields. Let $\pi$ be a uniformizer of $F$. Then there is an induced extension
of residue fields $k_L/k_F$ and $[L:F]=v_L(\pi)[k_L:k_F]$ for any uniformizer $\pi$ of $F$ 
(\cite[Chapter II, 6.8]{neukirch:ant}).  

An extension $L/F$ of local fields is said to be \emph{unramified} if a uniformizer $\pi$ of $F$ is also a uniformizer in
$L$. For example, if $\zeta_n$ is a primitive $n$-th root of unity in some separable closure of $F$ and if 
$(n,\mathrm{char}(k))=1$, then $F(\zeta_n)/F$ is an unramified extension 
(\cite[Chapter II, Proposition 7.12]{neukirch:ant}).
Furthermore, Hensel's Lemma  
implies that, when $(n,\mathrm{char}(k))=1$, $\zeta_n\in F$ if and only if $k$ contains a primitive $n$th root of unity.

Local class field theory gives us valuable information about norm groups $N_{L/F}(L^\times)$ of finite algebraic extensions
$L/F$ of local fields:
\begin{enumerate}
\item If $L/F$ is a finite abelian extension of local fields of degree $d$, then $F^\times/N_{L/F}(L^\times)$ is a group 
of order $d$ (\cite[Chapter 5, Theorem 1.3]{neukirch:ant}).
\item If $L/F$ is a finite unramified extension of local fields, then $N_{L/F}(U_L)=U_F$ 
(\cite[Chapter V, Corollary 1.2]{neukirch:ant}).
\item The theory of the Hilbert symbol tells us that if $(n,\mathrm{char}(k))=1$ and if $\zeta_n\in F$, 
then $u\in U_F$ is a norm from $F(\sqrt[n]{b})$ if and only if 
\[
\bar{u}^{v(b)\frac{\card{k^\times}}{n}}=1\mbox{ in } k^\times.
\]
(See \cite[Chapter V, 3.2 (iii) and 3.4]{neukirch:ant}.)
\end{enumerate} 

As above, $E$ denotes the field obtained from $F$ by adjoining a root of $\Phi(T)=T^2-T+1$.  
\begin{lem} \label{lem:localconst}
Let $F$ be a local field with finite residue field $k$ of order $q=p^f$. Suppose that $\Q_3\not\subset F$.  
Then
\begin{enumerate}
\item $\pf{a}\cconst{F}=0$ if and only if $a\in \an{-1}\cdot N_{E/F}(E^\times)$.
\item For any uniformizer, $\pi$, of $F$ the specialization map $S_{\pi,-1}:\rpb{F}\to\redpb{k}$ induces an isomorphism
of $\sgr{F}$-modules
\[
\aug{F}\cdot\cconst{F}\cong \Z\cdot\cconst{k} =\ptor{\pb{k}}{3}.
\]
\end{enumerate} 
\end{lem}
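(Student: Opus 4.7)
The plan is to deduce both parts from an explicit computation of the specialization map $S:=S_{\pi,-1}$ on $\cconst{F}$. First I would compute $S(\cconst{F})=\cconst{k}$, using the formula $\cconst{F}=D(x)$ for any $x\in U_v$ with $\bar{x}\notin\{0,1\}$: each of the arguments $x$, $1-x^{-1}$ and $1-x$ appearing in the terms of $D(x)$ is a unit with nonzero reduction, so each specialises to the corresponding term of $D(\bar{x})=\cconst{k}$. For arbitrary $a=u\pi^r$ with $u\in U_v$, the ring homomorphism $\rho_{\pi,-1}$ satisfies $\rho_{\pi,-1}(\an{a})=(-1)^r\an{\bar{u}}$ (Example \ref{exa:dv}); combined with the fact that $\sq{k}$ acts trivially on $\bl{k}\ni\cconst{k}$ (Lemma \ref{lem:blochfin}), this yields
\[
S(\pf{a}\cconst{F}) \;=\; \bigl((-1)^{v(a)}-1\bigr)\cconst{k}\ \in\ \Z\cconst{k}.
\]

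For part (1), the ``if'' direction is immediate from Theorem \ref{thm:df}(2). For the ``only if'' direction I would distinguish three cases. When $p\neq 3$ and $\zeta_3\in F$ one has $E=F$ and the condition holds trivially. When $p\neq 3$ and $\zeta_3\notin F$, the extension $E/F$ is unramified quadratic; surjectivity of the norm on units identifies $N_{E/F}(E^\times)(F^\times)^2$ with the subgroup of $F^\times$ of elements of even valuation, so the condition becomes ``$v(a)$ even''. For $v(a)$ odd the display above gives $S(\pf{a}\cconst{F})=-2\cconst{k}$, which is nonzero since $q\equiv 2\pmod{3}$ forces $\cconst{k}$ to have order $3$ in $\bl{k}\cong\Z/(q+1)/2$ (Lemma \ref{lem:arxivcplx}), whence $\pf{a}\cconst{F}\neq 0$. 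When $p=3$, the hypotheses that $v_F(3)$ and $f$ are both odd force $-1\notin N_{E/F}(E^\times)(F^\times)^2$: a valuation analysis of any putative solution $x^2+3y^2=-1$ in $F$ either reduces modulo the maximal ideal to $\bar{x}^2=-1$ in $k$ (impossible since $f$ odd gives $q\equiv 3\pmod{4}$) or requires a matching of valuations forcing $v_F(3)$ even. Since $N_{E/F}(E^\times)(F^\times)^2$ has index at most $2$ in $F^\times$, this gives $\an{-1}\cdot N_{E/F}(E^\times)(F^\times)^2=F^\times$ and the condition again holds trivially for every $a$.

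For part (2), I would use part (1) to control the structure of $\aug{F}\cconst{F}$. Setting $H=\an{-1}\cdot N_{E/F}(E^\times)(F^\times)^2$, part (1) gives $\pf{a}\cconst{F}=0$ for all $a\in H$, and for any $\pi\notin H$ the identity
\[
\pf{a}\cconst{F}=\an{a\pi^{-1}}\pf{\pi}\cconst{F}+\pf{a\pi^{-1}}\cconst{F}
\]
shows $\pf{a}\cconst{F}\in\{0,\pf{\pi}\cconst{F}\}$ for every $a\in F^\times$. Hence $\aug{F}\cconst{F}$ is cyclic, generated by $\pf{\pi}\cconst{F}$, and its order divides $3$ because $3\cconst{F}=0$. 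In the only non-trivial case ($p\neq 3$ and $\zeta_3\notin F$) we may take $\pi$ to be a uniformizer; then $S$ sends $\pf{\pi}\cconst{F}$ to $-2\cconst{k}$, a generator of $\Z\cconst{k}\cong\Z/3$, so both sides have order $3$ and the induced surjection is an isomorphism. In the remaining cases both sides vanish. Compatibility with the $\sgr{F}$-module structures is immediate from the definition of $\rho_{\pi,-1}$, since $\sq{F}$ acts on the source via the quotient $\sq{F}/H$ and the computation of $\rho_{\pi,-1}$ shows it acts on the target in the same way.

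The main obstacle will be the case $p=3$, where the specialization map contributes no information (since $\cconst{k}=0$ when $3\nmid q+1$) and one must instead carry out the Hilbert-symbol / valuation argument invoking the hypotheses that $v_F(3)$ and $f$ are both odd to establish $-1\notin N_{E/F}(E^\times)(F^\times)^2$; this is where the hypotheses of the lemma are genuinely used.
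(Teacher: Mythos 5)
Your proposal is correct and takes essentially the same route as the paper: compute the image of $\cconst{F}$ and $\pf{\pi}\cconst{F}$ under $S_{\pi,-1}$, dispose of the two trivial cases, and in the main case ($p\neq 3$, $\zeta_3\notin F$) use local class field theory for the unramified extension $E/F$ to identify $\an{-1}N_{E/F}(E^\times)$ with the even-valuation subgroup and the nonvanishing of $\cconst{k}$ to conclude. You supply more detail than the paper does (e.g.\ the explicit valuation analysis showing $-1\notin N_{E/F}(E^\times)$ when $p=3$, and the observation that $\pf{a\pi^{-1}}\pf{\pi}\cconst{F}=\pf{\pi}\pf{a\pi^{-1}}\cconst{F}=0$ by commutativity so that $\aug{F}\cconst{F}$ really is generated by $\pf{\pi}\cconst{F}$ alone), but the skeleton of the argument is the same.
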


\begin{proof}
We have already seen that if $a\in \an{-1}\cdot N_{E/F}(E^\times)$ then $\pf{a}\cconst{F}=0$ (Theorem \ref{thm:df}). 
In other words, the action of $\sgr{F}$ on $\cconst{F}$ factors through the group ring 
$\Z[F^\times/\an{-1}\cdot N_{E/F}(E^\times)]$.

If $\mathrm{char}(F)=3$, then $E=F$ and hence $\cconst{F}=0$
by Corollary \ref{cor:bconst} (2). Similarly, $\cconst{k}=0$.  So the result 
holds trivially. 

So we may assume that $\mathrm{char}(F),\mathrm{char}(k)\not=3$. In this case $E=F(\sqrt{-3})=F(\zeta_3)$.
By the facts just quoted, $E/F$ is unramified and every unit is a norm. 
In particular, $-1\in  N_{E/F}(E^\times)$. 

If $\zeta_3\in F$, then 
$\pf{a}\cconst{F}=0$ for all $a$, and also $\cconst{k}=0$. The result holds again for trivial reasons. 

Otherwise, $\zeta_3\not\in F$ and $F^\times/N_{E/F}(E^\times)$ is cyclic of order $2$ generated by the class of a 
uniformizer $\pi$. 
It follows that 
\[
\aug{F}\cconst{F}=\Z\pf{\pi}\cconst{F}
\]
has order $1$ or $3$. 
However,  since $\zeta_3\not\in k$, $q\not\equiv 1\pmod{3}$ so $3|q+1$ and $\cconst{k}$ has order $3$. 
$S_{\pi,-1}(\cconst{F})=\cconst{k}\not= 0$ 
and since $\pi$ acts as $-1$ on the right, $S_{\pi,-1}(\pf{\pi}\cconst{F})=-2\cconst{k}=\cconst{k}\not =0$, so that 
the statements of the lemma follow immediately. 
\end{proof}

\begin{lem}\label{lem:q3} Suppose that $\Q_3\subset F$. Then $-1\in N_{E/F}(E^\times)$ if and only if $[F:\Q_3]$ is 
even. 
\end{lem}

\begin{proof} 
By (3) above (with $n=2$ and $u=-3$), 
\begin{eqnarray*}
-1\in N_{E/F}(E^\times)&\iff & (-1)^{v(3)\frac{3^f-1}{2}}=1\\
&\iff & v(3)\frac{3^f-1}{2}\mbox{ is even}\\
&\iff & v(3)f\mbox{ is even}.
\end{eqnarray*}
However, by the result on the degree of local field extensions quoted before Lemma \ref{lem:localconst} above, 
$[F:\Q_3]=v(3)f$. 
\end{proof}
\begin{cor}\label{cor:q3}
If $[F:\Q_3]$ is odd then 
\[
\aug{F}\cconst{F}=0=\Z\cdot \cconst{k}.
\]
\end{cor}
\begin{proof}
Under these hypotheses, $-1\not\in N_{E/F}(E^\times)$ and thus $F^\times=\an{-1}\cdot N_{E/F}(E^\times)$ (since 
$[F^\times:N_{E/F}(E^\times)]=2$).  Thus $\aug{F}\cconst{F}=0$ by Theorem \ref{thm:df}. But $\mathrm{char}(k)=3$ 
and hence $\cconst{k}=0$ also.
\end{proof}

\begin{rem}\label{rem:q3} 
Of course, if $\Q_3\subset F$ and $-1\in N_{E/F}(E^\times)$ then our results do not rule out  the 
possibility that $\aug{F}\cconst{F}$ has order $3$ (rather than $1$), but this module, if nontrivial, 
 cannot be detected by $S_{\pi,-1}$ (since $\cconst{k}=0$ in this case).
\end{rem}

The following lemma will be central to our computations below:

\begin{lem}\label{cor:ann}
In any field $F$ we have
\[
\epm{b}\gpb{a}=0\mbox{ in } \zhalf{\rrrpb{F}}
\]
 whenever $a,b\in F^\times$ with $a\equiv -b \pmod{(F^\times)^2}$.
\end{lem}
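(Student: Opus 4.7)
My plan is to prove the stronger (pre-inversion-of-$2$) statement that $\pf{b}\gpb{a}=0$ in $\rrrpb{F}$ itself; then $\epm{b}\gpb{a}=-\tfrac{1}{2}\pf{b}\gpb{a}=0$ in $\zhalf{\rrrpb{F}}$ is immediate. The hypothesis $a\equiv -b\pmod{(F^\times)^2}$ means $\an{b}=\an{-a}$ in $\sq{F}$, so $\pf{b}=\pf{-a}$ and it suffices to prove $\pf{-a}\gpb{a}=0$ in $\rrrpb{F}$. The case $a=1$ is trivial since $\gpb{1}=0$ by definition, so I assume $a\neq 1$.

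The key input is Theorem \ref{thm:df}(1), which gives the identity $\pf{a}\cconst{F}=\suss{1}{a}-\suss{2}{a}$ in $\rpb{F}$. By the very definition $\kks{F}=\ks{1}{F}+\aug{F}\cconst{F}$, both $\suss{1}{a}$ (lying in $\ks{1}{F}$) and $\pf{a}\cconst{F}$ (lying in $\aug{F}\cconst{F}$) vanish in $\rrrpb{F}=\rpb{F}/\kks{F}$. The identity therefore forces $\suss{2}{a}=0$ in $\rrrpb{F}$ as well.

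Now I exploit these two vanishings to eliminate $\gpb{a^{-1}}$. The relation $\suss{1}{a}=\gpb{a}+\an{-1}\gpb{a^{-1}}=0$ yields $\gpb{a^{-1}}=-\an{-1}\gpb{a}$. Separately, $\suss{2}{a}=\an{1-a}\bigl(\an{a}\gpb{a}+\gpb{a^{-1}}\bigr)=0$, so multiplying by $\an{1-a}$ (which squares to $1$) gives $\an{a}\gpb{a}+\gpb{a^{-1}}=0$, i.e., $\gpb{a^{-1}}=-\an{a}\gpb{a}$. Equating the two expressions for $\gpb{a^{-1}}$ produces $(\an{a}-\an{-1})\gpb{a}=0$ in $\rrrpb{F}$. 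Finally, $\an{a}-\an{-1}=\an{-1}\pf{-a}$, so multiplying by $\an{-1}$ delivers $\pf{-a}\gpb{a}=0$, as required.

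There is no serious obstacle; the only point requiring any attention is recognizing that $\suss{2}{a}$ becomes zero in $\rrrpb{F}$ precisely because $\aug{F}\cconst{F}$ has been quotiented out (passing only to $\rrpb{F}=\rpb{F}/\ks{1}{F}$ would not be enough). Incidentally, the argument shows that $\pf{b}\gpb{a}$ already vanishes integrally in $\rrrpb{F}$, so the hypothesis of inverting $2$ in the statement is merely needed to make sense of the idempotent $\epm{b}$.
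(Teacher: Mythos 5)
Your argument is correct and follows essentially the same route as the paper: deduce $\gpb{a^{-1}}=-\an{-1}\gpb{a}$ from $\suss{1}{a}=0$ and $\gpb{a^{-1}}=-\an{a}\gpb{a}$ from $\suss{2}{a}=0$ in $\rrrpb{F}$, compare the two, and conclude $\an{-a}\gpb{a}=\gpb{a}$. The only minor difference is cosmetic: you invoke Theorem \ref{thm:df}(1) to see that $\suss{2}{a}$ dies in $\rrrpb{F}$, whereas it follows immediately from $\ks{2}{F}\subset\kks{F}=\ks{1}{F}+\ks{2}{F}$, which is the paper's definition of $\kks{F}$; the identity $\kks{F}=\ks{1}{F}+\aug{F}\cconst{F}$ is a consequence, not the definition. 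Your closing observation — that $\pf{-a}\gpb{a}$ already vanishes integrally and $\zhalf{\ }$ only enters so that $\epm{b}$ makes sense — is also made implicitly in the paper's proof.
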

\begin{proof} 
Let $a\in F^\times$, $a\not= 1$. 

Then $\an{a}\gpb{a}=\an{-1}\gpb{a}$ in $\rrrpb{F}$ by Lemma \ref{lem:rrrpb}. Thus $\an{-a}\gpb{a}=\gpb{a}$ and hence 
$\pf{-a}\gpb{a}=0$. It follows that $\epm{-a}\gpb{a}=0$ in $\zhalf{\rrrpb{F}}$. 

Of course, $\epm{x}$ only depends on the square class of $x$, so the result follows. 
\end{proof}

For the remainder of this section we let $F$ be a local field complete with respect to the discrete valuation $v$, 
with residue 
field $k$ of order $q=p^f$, with $p$ \emph{odd}. Furthermore, for simplicity, we will suppose that if $\Q_3\subset F$ 
then $[F:\Q_3]$ is odd.
  
Recall that  $\sq{F}$ has order $4$: if 
$\pi$ is a uniformizing parameter and if $u$ is a nonsquare unit $\sq{F}=\{ 1,\an{\pi},\an{u},\an{u\pi}\}$. 
(If $q\equiv 3\pmod{4}$, we can take $u=-1$.)

We  let $\widehat{\sq{F}}$ denote the group of characters $\hom{}{\sq{F}}{\mu_2}=\hom{}{\sq{F}}{\{\pm 1\} }$ of 
$\sq{F}$. 
Given $\chi\in 
\widehat{\sq{F}}$, we have the associated idempotent
\[
\idem{\chi}=\frac{1}{4}\sum_{g\in \sq{F}}\chi(g)\an{g}.
\]
Observe that if $\chi$ is a \emph{nontrivial} character on $\sq{F}$, then
\[
\idem{\chi}=\prod_{a\in\chi^{-1}(-1)}\epm{a}\in \zhalf{\sgr{F}}.
\]

If $M$ is an $\zhalf{\sgr{F}}$-module and if $\chi\in\widehat{\sq{F}}$, then $M_\chi:=\idem{\chi}(M)$ is a submodule and 
$g\cdot m=\chi(g)m$ for all $g\in \sq{F}$. Observe that the functor $M\to M_\chi$ is an exact functor on the category of 
$\zhalf{\sgr{F}}$-modules.  

For the rest of this section we fix the following: Let $\pi$ be a uniformizing parameter for $F$ and let $u$ be a 
fixed nonsquare 
unit, which we take to be $-1$ in the case $q\equiv 3\pmod{4}$.  Clearly a nontrivial 
character in $\widehat{\sq{F}}$ is determined by $\chi^{-1}(-1)$.
We label the four characters as follows: $\chi_1$ is the trivial character, $\chi$ is the character with 
$\chi^{-1}(-1)=\{\an{\pi},\an{u\pi}\}$, $\psi$ is the character with $\psi^{-1}(-1)=\{\an{u\pi},\an{u}\}$ and $\psi'$ is 
the remaining character.

Note that for any $\zhalf{\sgr{F}}$-module $M$,  we have a decomposition
\[
M=M_{\chi_1}\oplus M_\chi\oplus M_\psi\oplus M_{\psi'}
\]
where 
\[
M_{\chi_1}=M^{\sq{F}}\cong M_{\sq{F}}=\ho{0}{F^\times}{M}
\]
and
\[
\aug{F}M=M_\chi\oplus M_\psi\oplus M_{\psi'}.
\]

Since $\Tor{\mu_F}{\mu_F}$ is a trivial $\sq{F}$-module, from the exact sequence
\[
0\to\zhalf{\Tor{\mu_F}{\mu_F}}\to\ho{3}{\spl{2}{F}}{\zhalf{\Z}}\to\zhalf{\rbl{F}}\to 0
\]
it follows that 
\[
\ho{3}{\spl{2}{F}}{\zhalf{\Z}}_\rho=\zhalf{\rbl{F}}_\rho
\]
for any $\rho\not=\chi_1$. 

On the other hand, 
\[
\ho{3}{\spl{2}{F}}{\zhalf{\Z}}_{\chi_1}=\ho{0}{F^\times}{\ho{3}{\spl{2}{F}}{\zhalf{\Z}}}=\zhalf{\kind{F}}.
\]

Thus we have a $\zhalf{\sgr{F}}$-module decomposition
\[
\ho{3}{\spl{2}{F}}{\zhalf{\Z}}\cong \zhalf{\kind{F}}\oplus\zhalf{\rbl{F}}_{\chi}\oplus\zhalf{\rbl{F}}_{\psi}
\oplus\zhalf{\rbl{F}}_{\psi'}.
\]
Our goal in the remainder of this section is to show that the last two factors on the right are zero and that 
the second factor is isomorphic, via $S_{\pi,-1}$, to $\zhalf{\pb{k}}$.

In order to do this, we make a couple of reductions:

\begin{lem}\label{lem:localdf} Let $F$ be as stated. Then 
\begin{enumerate}
\item $(\cconstmod{F})_\psi=0=(\cconstmod{F})_{\psi'}$.
\item $(\cconstmod{F})_\chi=\aug{F}\cconst{F}$.
\end{enumerate} 
\end{lem}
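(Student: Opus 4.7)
The plan hinges on two facts from Lemma~\ref{lem:localconst}: first, under the stated hypotheses every unit $u\in U$ satisfies $\pf{u}\cconst{F}=0$ (indeed all units of $F$ lie in $N_{E/F}(E^\times)$ when $\Q_3\not\subset F$, while $\aug{F}\cconst{F}=0$ altogether in the remaining case); second, the specialization $S_{\pi,-1}$ induces an isomorphism $\aug{F}\cconst{F}\cong\Z\cdot\cconst{k}$.

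The key structural observation is that $\cconstmod{F}=\sgr{F}\cconst{F}$ is cyclic as an $\sgr{F}$-module, and $\sgr{F}$ is commutative, so the identity $\pf{u}\cconst{F}=0$ propagates: $\pf{u}$ annihilates all of $\cconstmod{F}$. Equivalently, $\an{u}$ acts trivially, and $\epm{u}$ acts as zero on $\cconstmod{F}$. Part~(1) now falls out immediately, since the idempotents $e_\psi=\epm{u}\ep{\pi}$ and $e_{\psi'}=\epm{u}\epm{\pi}$ both factor through $\epm{u}$, and hence vanish on $\cconstmod{F}$.

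For part~(2), I would compute with $e_\chi=\ep{u}\epm{\pi}$. Because $\ep{u}$ acts as the identity on $\cconstmod{F}$, the projector $e_\chi$ restricts to $\epm{\pi}=-\tfrac{1}{2}\pf{\pi}$ there. Applied to the generator $\cconst{F}$ this gives $-\tfrac{1}{2}\pf{\pi}\cconst{F}\in\aug{F}\cconst{F}$, yielding the inclusion $(\cconstmod{F})_\chi\subseteq\aug{F}\cconst{F}$. For the reverse inclusion, the identity $\pf{\pi}^2=-2\pf{\pi}$ in $\sgr{F}$ gives $\epm{\pi}\pf{\pi}\cconst{F}=-\tfrac{1}{2}\pf{\pi}^2\cconst{F}=\pf{\pi}\cconst{F}$; combined with the triviality of the $\an{u}$-action on $\aug{F}\cconst{F}$, this shows that $e_\chi$ is the identity on $\aug{F}\cconst{F}$, whence $\aug{F}\cconst{F}=e_\chi(\aug{F}\cconst{F})\subseteq(\cconstmod{F})_\chi$.

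The principal obstacle, namely pinning down precisely which square classes $\an{a}$ annihilate $\cconst{F}$, has already been overcome in Lemma~\ref{lem:localconst} via Theorem~\ref{thm:df} and local class field theory; once this is in hand, the argument above is essentially mechanical, the only real subtlety being the passage through the idempotent $\epm{\pi}$, which we streamline using $\pf{\pi}^2=-2\pf{\pi}$.
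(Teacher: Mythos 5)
Your proof is correct and hinges on the same key observation as the paper: $\pf{u}\cconst{F}=0$ forces the idempotent $\epm{u}$ to annihilate the cyclic module $\cconstmod{F}$, and both $\idem{\psi}$ and $\idem{\psi'}$ contain $\epm{u}$ as a factor. For part (2) the paper is slightly more economical, noting that $\aug{F}M=M_\chi\oplus M_\psi\oplus M_{\psi'}$ holds for any $\zhalf{\sgr{F}}$-module $M$, so (2) follows at once from (1) without your explicit computation with $e_\chi=\ep{u}\epm{\pi}$ and the identity $\pf{\pi}^2=-2\pf{\pi}$; also, the second fact you cite from Lemma~\ref{lem:localconst} (the isomorphism under $S_{\pi,-1}$) is not actually used anywhere in your argument.
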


\begin{proof} Note that 
\[
\aug{F}\cconst{F}=(\cconstmod{F})_\chi\oplus(\cconstmod{F})_\psi\oplus(\cconstmod{F})_{\psi'}.
\]
So we must prove that the last two factors are $0$.

Recall that $\idem{\psi}=\epm{u}\epm{u\pi}$ and $\idem{\psi'}=\epm{u}\epm{\pi}$.  
Now $\epm{u}=-\pf{u}/2$ and our conditions guarantee that $u\in \an{-1}N_{E/F}(E^\times)$ so that $\pf{u}\cconst{F}=0$
by Theorem \ref{thm:df}. 
\end{proof}

In view of Corollary \ref{cor:rrrbl} and Lemma \ref{lem:rrrrbl} we have:

\begin{cor} \label{cor:psipsi}
$\zhalf{\rbl{F}}_{\psi}=\zhalf{\rrrrbl{F}}_{\psi}$ and $\zhalf{\rbl{F}}_{\psi'}=\zhalf{\rrrrbl{F}}_{\psi'}$.
\end{cor}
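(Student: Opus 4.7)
The plan is to combine the tower of quotients $\rbl{F} \twoheadrightarrow \rrbl{F} \twoheadrightarrow \rrrrbl{F}$ with the vanishing of the $\psi$ and $\psi'$ components of $\cconstmod{F}$ from Lemma \ref{lem:localdf}, using that the idempotents $\idem{\psi}, \idem{\psi'} \in \zhalf{\sgr{F}}$ carve out exact direct summands.

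First I would invoke Corollary \ref{cor:rbftilde}: the kernel of the surjection $\rbl{F} \to \rrbl{F}$ is annihilated by $4$, so after inverting $2$ the induced map $\zhalf{\rbl{F}} \to \zhalf{\rrbl{F}}$ is an isomorphism of $\zhalf{\sgr{F}}$-modules. This reduces the problem to showing $\zhalf{\rrbl{F}}_\rho = \zhalf{\rrrrbl{F}}_\rho$ for $\rho \in \{\psi, \psi'\}$.

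Next, by definition $\rrrrbl{F} = \rrbl{F}/\cconstmod{F}$, so I would write down the short exact sequence
\[
0 \to \cconstmod{F} \to \rrbl{F} \to \rrrrbl{F} \to 0,
\]
invert $2$, and apply the functor $M \mapsto M_\rho = \idem{\rho} M$ for $\rho \in \{\psi, \psi'\}$. Since $\idem{\rho}$ is an idempotent in $\zhalf{\sgr{F}}$, this functor splits off a direct summand and is therefore exact, yielding
\[
0 \to \zhalf{\cconstmod{F}}_\rho \to \zhalf{\rrbl{F}}_\rho \to \zhalf{\rrrrbl{F}}_\rho \to 0.
\]

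Finally, Lemma \ref{lem:localdf} gives $(\cconstmod{F})_\psi = 0 = (\cconstmod{F})_{\psi'}$, so the left-hand term vanishes in both cases and the middle arrow becomes the desired isomorphism. Chaining with step one produces the claim. There is no real obstacle here; the only bookkeeping point is that $\cconstmod{F}$ sits inside $\rrbl{F}$ so that the exact sequence above is well-formed, which is already built into the definition of $\rrrrbl{F}$ and is consistent with the earlier sequence $0 \to \aug{F}\cconst{F} \to \rrbl{F} \to \rrrbl{F} \to 0$.
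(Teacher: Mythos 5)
Your proof is correct and follows exactly the reasoning the paper intends: the paper itself explicitly notes that Corollary \ref{cor:rbftilde} gives $\zhalf{\rbl{F}}\cong\zhalf{\rrbl{F}}$, then defines $\rrrrbl{F}=\rrbl{F}/\cconstmod{F}$, records that $M\mapsto M_\rho$ is exact, and states Lemma \ref{lem:localdf}, from which the corollary is immediate by the argument you give. No discrepancies.
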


Observe that $S_{\pi,-1}$ induces a well-defined surjective homomorphism 
\[
\bar{S}_{\pi,-1}:\rrrrpb{F}\to \rredpb{k}. 
\]

\begin{lem} \label{lem:rbfbar}
The homomorphism
\[
S_{\pi,-1}:\zhalf{\rbl{F}}_{\chi}\to \zhalf{\redpb{k}}
\]
is an isomorphism if and only if the homomorphism 
\[
\bar{S}_{\pi,-1}:\zhalf{\rrrrbl{F}}_\chi\to \zhalf{\rredpb{k}}
\]
is an isomorphism. 
\end{lem}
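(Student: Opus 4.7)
The plan is to identify Lemma \ref{lem:rbfbar} as a straightforward five-lemma comparison between two short exact sequences, one on the field side and one on the residue-field side, where the left-hand vertical map is the isomorphism provided by Lemma \ref{lem:localconst}(2).

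First I would set up the top row. By Corollary \ref{cor:rbftilde}, $\zhalf{\rbl{F}} \cong \zhalf{\rrbl{F}}$, so I replace $\rbl{F}$ by $\rrbl{F}$ throughout. The defining quotient $\rrrrbl{F} = \rrbl{F}/\cconstmod{F}$ gives the exact sequence
\[
0 \to (\cconstmod{F})_\chi \to (\rrbl{F})_\chi \to (\rrrrbl{F})_\chi \to 0,
\]
exactness being preserved by the exact functor $M \mapsto M_\chi = \idem{\chi}(M)$ and by inverting $2$. By Lemma \ref{lem:localdf}(2), $(\cconstmod{F})_\chi = \aug{F}\cconst{F}$, and since this is $3$-torsion (by $3\cconst{F} = 0$), inverting $2$ leaves it unchanged.

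Next I set up the bottom row. From the definition $\rredpb{k} = \redpb{k}/\Z\cconst{k}$ I get
\[
0 \to \Z\cconst{k} \to \zhalf{\redpb{k}} \to \zhalf{\rredpb{k}} \to 0,
\]
again using that $\Z\cconst{k}$ is $3$-torsion.

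Now I assemble the commutative diagram
\[
\begin{array}{ccccccccc}
0 & \to & \aug{F}\cconst{F} & \to & \zhalf{(\rrbl{F})_\chi} & \to & \zhalf{(\rrrrbl{F})_\chi} & \to & 0 \\
 & & \downarrow \cong & & \downarrow S_{\pi,-1} & & \downarrow \bar S_{\pi,-1} & & \\
0 & \to & \Z\cconst{k} & \to & \zhalf{\redpb{k}} & \to & \zhalf{\rredpb{k}} & \to & 0.
\end{array}
\]
Commutativity is automatic from the definition of $\bar S_{\pi,-1}$, and the left vertical map is an isomorphism by Lemma \ref{lem:localconst}(2). A quick check that nothing is lost by restricting the horizontal maps to the $\chi$-component on the field side: under $\rho_{\pi,-1}$, the element $\an{\pi}$ acts as $-1$ and $\an{u}$ as $+1$ on $\redpb{k}$ (because $k^\times$ acts trivially on $\pb{k}$), so evaluating the four idempotents of $\widehat{G}$ shows $\redpb{k} = (\redpb{k})_\chi$ and similarly for $\rredpb{k}$; hence the image of $S_{\pi,-1}$ on the full $\zhalf{\rbl{F}}$ equals the image of its restriction to $\zhalf{(\rrbl{F})_\chi}$, and likewise for $\bar S_{\pi,-1}$.

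The lemma then follows from the short five-lemma: with the left vertical map an isomorphism, the middle vertical map is an isomorphism if and only if the right vertical map is. The only real content is the identification of the kernel on each side, which is exactly Lemma \ref{lem:localdf}(2) above and is the step I would expect to be the main, though modest, point of care.
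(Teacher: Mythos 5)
Your proof is correct and takes essentially the same approach as the paper: a two-out-of-three (short five-lemma) argument on the commutative diagram with exact rows built from $\rrrrbl{F}=\rrbl{F}/\cconstmod{F}$ and $\rredpb{k}=\redpb{k}/\Z\cconst{k}$, with the left vertical isomorphism coming from Lemma \ref{lem:localconst}(2) and the kernel identification from Lemma \ref{lem:localdf}(2). The paper's proof is terser (it does not spell out the exactness of $M\mapsto M_\chi$, the $3$-torsion observation, or the check that $\redpb{k}$ is pure $\chi$-type), but the content is identical.
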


\begin{proof} Using Lemma \ref{lem:localdf} (2), we have a commutative diagram of $\sgr{F}$-modules with exact rows
\[
\xymatrix{
0\ar[r]
&
\aug{F}\cconst{F}\ar[r]\ar[d]^{\cong}
&
\zhalf{\rbl{F}}_{\chi}\ar[r]\ar[d]^{S_{\pi,-1}}
&
\zhalf{\rrrrbl{F}}_{\chi}\ar[r]\ar[d]^{\bar{S}_{\pi,-1}}
&0\\
0\ar[r]
&
\Z\cconst{k}\ar[r]
&
\zhalf{\redpb{k}}\ar[r]
&
\zhalf{\rredpb{k}}\ar[r]
&
0
}
\]
in which the left vertical arrow is an isomorphism by Lemma \ref{lem:localconst} (2). 
\end{proof}

\subsection{$S_3$-dynamics}
Again, in this section, $F$ denotes a field complete with respect to a discrete value $v$ with finite 
residue field $k$ of odd order $q$.

For any field $L$, the transformations 
\[
\sigma,\tau:\projl{L}\to\projl{L},\quad \sigma(x)=x^{-1}\mbox{ and }\tau(x)=1-x
\]
 determine an action 
of the symmetric group on $3$ letters  $S_3$.  $\{ 0,1, \infty\}$ is an orbit for this action, and hence 
$S_3$ acts on $\projl{L}\setminus\{ 0,1,\infty\}=F^\times\setminus\{ 1\}$. 

\begin{lem}\label{lem:s3}
If $\alpha\in \sgr{F}$ and if $\alpha\gpb{x}=0$ in $\rrrrpb{F}$. Then $\alpha\gpb{\gamma(x)}=0$ for all 
$\gamma\in S_3$.
\end{lem}
\begin{proof}
In $\rrrrpb{F}$ we have $\cconst{F}=0$ and $\suss{1}{x}=0$ for all $x$, so that 
\[
\gpb{x^{-1}}=\gpb{1-x}=-\an{-1}\gpb{x}.
\]
It follows that if $\alpha\gpb{x}=0$ for some $\alpha\in \sgr{F}$, then $\alpha\gpb{\sigma(x)}=\alpha\gpb{\tau(x)}=0$ 
also. The statement follows since $\sigma$ and $\tau$ generate $S_3$
\end{proof}

\begin{rem} We will apply this below in the case where $\rho$ is a character of $\sq{F}$ and 
$\alpha=\idem{\rho}\in\zhalf{\sgr{F}}$
\end{rem}

Let $\bar{R}=(k^\times)^2$ and let $\bar{N}=k^\times\setminus (k^\times)^2$. 
Let 
\[
\bar{R}_1=\{ a\in \bar{R}\setminus\{ 1\} | \tau(a)\in \bar{R}\},\qquad 
\bar{R}_{-1}=\{a\in \bar{R} | \tau(a)\in \bar{N}\}.
\]

Let
\[
\bar{N}_1=\{ a\in \bar{N} | \tau(a)\in \bar{R}\},\qquad 
\bar{N}_{-1}=\{a\in \bar{N} | \tau(a)\in \bar{N}\}.
\]

Thus we have a partition 
\[
k^\times=\{ 1\}\cup \bar{R}_1\cup \bar{R}_{-1}\cup\bar{N}_1\cup\bar{N}_{-1}
\]

Let $U$ be the group of units of $F$ and let $U_1$ be the kernel of the reduction map $U\to k^\times$.
Then there is a corresponding partition
\[
U=R\cup N = U_1\cup R_1\cup R_{-1}\cup N_1\cup N_{-1}
\]
where $R:=U^2=\{ a\in U | \bar{a}\in \bar{R}\}$, $N= U\setminus R$, 
$R_1:=\{ a\in U | \bar{a}\in\bar{R}_1\}=\{ a \in R | \tau(a)\in R\}$, $R_{-1}:=\{ a\in U | \bar{a}\in\bar{R}_{-1}\}$,
$N_1:=\{ a\in U | \bar{a}\in\bar{N}_1\}$ and $N_{-1}:=\{ a\in U | \bar{a}\in\bar{N}_{-1}\}$.

\begin{lem}\label{lem:rn}
\begin{enumerate}
\item If $q\equiv 1\pmod{4}$ then 
\begin{eqnarray*}
&\bar{R}_1=\sigma(\bar{R}_1)=\tau(\bar{R}_1)\\
&\sigma(\bar{N}_1)=\bar{N}_{-1}\mbox{ and } \sigma(\bar{R}_{-1})=\bar{R}_{-1}
\end{eqnarray*}
Furthermore
\[
|\bar{R}_{-1}|=|\bar{N}_{-1}|=|\bar{N}_1|=\frac{q-1}{4}\mbox{ and } |\bar{R}_1|=\frac{q-5}{4}.
\]
\item If $q\equiv 3\pmod{4}$ then 
\begin{eqnarray*}
&\bar{N}_{-1}=\sigma(\bar{N}_{-1})=\tau(\bar{N}_{-1})\\
&\sigma(\bar{R}_1)=\bar{R}_{-1}\mbox{ and } \sigma(\bar{N}_{1})=\bar{N}_{1}
\end{eqnarray*}
Furthermore
\[
|\bar{R}_{-1}|=|\bar{N}_{1}|=|\bar{R}_1|=\frac{q-3}{4}\mbox{ and } |\bar{N}_{-1}|=\frac{q+1}{4}.
\]

\end{enumerate}
\end{lem}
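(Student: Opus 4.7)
The plan is to analyze how the two involutions $\sigma\colon x\mapsto x^{-1}$ and $\tau\colon x\mapsto 1-x$ act on the partition
\[
k^\times \;=\; \{1\}\cup\bar{R}_1\cup\bar{R}_{-1}\cup\bar{N}_1\cup\bar{N}_{-1},
\]
and then combine the resulting equalities of cardinality with the elementary counts
\[
|\bar{R}_1|+|\bar{R}_{-1}| \;=\; \frac{q-3}{2},\qquad |\bar{N}_1|+|\bar{N}_{-1}| \;=\; \frac{q-1}{2},
\]
which follow from $|\bar{R}|=|\bar{N}|=(q-1)/2$ on removing the single excluded point $a=1\in\bar{R}$.

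The key computational input is the identity $1-a^{-1} = -(1-a)/a$, which shows that the square class of $1-a^{-1}$ is always the product of the square classes of $-1$, $a$, and $1-a$. Since $\sigma$ preserves the square class of $a$, the $\sigma$-action on the four subsets is controlled entirely by whether $-1\in (k^\times)^2$. In case (1), with $-1$ a square, straightforward case analysis from this identity shows that $\sigma$ preserves each of $\bar{R}_1$ and $\bar{R}_{-1}$ and interchanges $\bar{N}_1$ with $\bar{N}_{-1}$; in case (2), with $-1$ a non-square, $\sigma$ instead preserves each of $\bar{N}_1$ and $\bar{N}_{-1}$ and interchanges $\bar{R}_1$ with $\bar{R}_{-1}$. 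The action of $\tau$, however, is the same in both cases: since $1-\tau(a)=a$, the map $\tau$ merely swaps the roles of $a$ and $1-a$, so it preserves each of $\bar{R}_1$ and $\bar{N}_{-1}$ and interchanges $\bar{R}_{-1}$ with $\bar{N}_1$. This verifies all the assertions about the $\sigma$- and $\tau$-actions, and in particular yields the identity $|\bar{R}_{-1}|=|\bar{N}_1|$ valid in both cases.

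To extract the cardinalities, in case (1) the $\sigma$-equality $|\bar{N}_1|=|\bar{N}_{-1}|$ combined with the partition count for $\bar{N}$ gives $|\bar{N}_1|=|\bar{N}_{-1}|=(q-1)/4$, whence $|\bar{R}_{-1}|=|\bar{N}_1|=(q-1)/4$ and $|\bar{R}_1|=(q-3)/2-(q-1)/4=(q-5)/4$. In case (2) the $\sigma$-equality $|\bar{R}_1|=|\bar{R}_{-1}|$ combined with the partition count for $\bar{R}$ forces $|\bar{R}_1|=|\bar{R}_{-1}|=(q-3)/4$, then $|\bar{N}_1|=(q-3)/4$, and finally $|\bar{N}_{-1}|=(q-1)/2-(q-3)/4=(q+1)/4$. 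There is no substantive obstacle here; the entire argument is bookkeeping in $\sq{k}$, and the only subtlety is tracking correctly the sign contributed by $-1$ when passing $\sigma$ through the identity for $1-a^{-1}$.
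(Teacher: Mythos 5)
Your proposal is correct and uses the same key computation as the paper: the identity $1-a^{-1}=-(1-a)\cdot a^{-1}$, read in $\sq{k}$, together with the observation that the outcome depends only on the square class of $-1$. The one small organizational difference is that you explicitly record the full $\tau$-action, in particular that $\tau$ interchanges $\bar{R}_{-1}$ with $\bar{N}_1$ in both cases, which supplies the relation $|\bar{R}_{-1}|=|\bar{N}_1|$ needed to finish the count; the paper proves only $\tau(\bar{R}_1)=\bar{R}_1$ (resp.\ $\tau(\bar{N}_{-1})=\bar{N}_{-1}$) and compresses the rest into ``simple counting,'' implicitly using the complementary partitions $|\bar{R}_1|+|\bar{N}_1|=(q-3)/2$ and $|\bar{R}_{-1}|+|\bar{N}_{-1}|=(q-1)/2$ instead. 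Either bookkeeping closes the argument; yours is marginally more explicit but not a genuinely different route.
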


\begin{proof}
\begin{enumerate}
\item Clearly $\tau(\bar{R}_1)=\bar{R}_1$ by definition.  Furthermore, if $a\in \bar{R}_1$ then 
\[
1-\frac{1}{a}=(-1)\cdot(1-a)\cdot\frac{1}{a}\in \bar{R}
\]
and thus $a^{-1}=\sigma(a)\in \bar{R}_1$. 

It follows that $\sigma(\bar{R}_{-1})=\bar{R}_{-1}$ since $\sigma(\bar{R})=\bar{R}$. 

Similarly, if $a\in\bar{N}_{1}$ then $a^{-1}\in \bar{N}$ $1-a\in\bar{R}$ and thus 
\[
1-\frac{1}{a}=(-1)\cdot (1-a)\cdot \frac{1}{a}\in N
\] 
so that $\sigma(\bar{N}_{1})=\bar{N}_{-1}$.

The second statement follows by simple counting since 
\[
|\bar{N}_1|+|\bar{N}_{-1}|=|\bar{N}|=(q-1)/2.
\]

\item The argument is similar except that this time 
\[
a-1=(-1)\cdot(1-a)\in
\left\{
\begin{array}{ll}
\bar{N},& 1-a\in \bar{R}\\
\bar{R},& 1-a\in \bar{N}\\
\end{array}
\right.
\]
since $-1\in \bar{N}$.
\end{enumerate}
\end{proof}

Taking inverse images in $U\subset F$, and noting that $U\setminus U_1$ is closed under the action of $S_3$, we obtain :
\begin{cor} \label{cor:rn}
\begin{enumerate}
\item If $q\equiv 1\pmod{4}$ then 
\begin{eqnarray*}
&{R}_1=\sigma({R}_1)=\tau({R}_1)\\
&\sigma({N}_1)={N}_{-1}\mbox{ and } \sigma({R}_{-1})={R}_{-1}
\end{eqnarray*}
\item If $q\equiv 3\pmod{4}$ then 
\begin{eqnarray*}
&{N}_{-1}=\sigma({N}_{-1})=\tau({N}_{-1})\\
&\sigma({R}_1)={R}_{-1}\mbox{ and } \sigma({N}_{1})={N}_{1}
\end{eqnarray*}
\end{enumerate}

\end{cor}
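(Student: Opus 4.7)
The plan is to reduce the statement directly to Lemma \ref{lem:rn} via the reduction map $\pi\colon U\to k^\times$, $a\mapsto \bar{a}$. By construction, the sets $R_1, R_{-1}, N_1, N_{-1}$ are defined as the preimages under $\pi$ of $\bar{R}_1, \bar{R}_{-1}, \bar{N}_1, \bar{N}_{-1}$ respectively. Thus to prove an identity of the form $\sigma(R_i) = R_j$ or $\tau(R_i) = R_j$, it suffices to know the corresponding identity in the residue field and to know that $\sigma$ and $\tau$ are compatible with $\pi$.

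For $\sigma$ this is immediate: since $\pi$ is a homomorphism of multiplicative groups, $\pi(\sigma(a)) = \bar{a}^{-1} = \sigma(\bar{a})$ for every $a\in U$. For $\tau$ we have $\pi(\tau(a)) = 1-\bar{a} = \tau(\bar{a})$ whenever $1-a$ lies in $U$. Every element of $R_1\cup R_{-1}\cup N_1\cup N_{-1}$ satisfies $\bar{a}\ne 1$ (since $a\in U_1$ iff $\bar{a}=1$), so $1-a$ is a nonzero element of $\mathcal{O}$ with $\overline{1-a}=1-\bar{a}\ne 0$; hence $1-a\in U$, and in fact $1-a\in U\setminus U_1$ (because $1-\bar{a}\ne 1$). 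Thus $\sigma$ and $\tau$ both restrict to self-maps of $U\setminus U_1$ lying over the corresponding involutions on $k^\times\setminus\{1\}$.

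With these compatibilities in hand, each assertion of the corollary is obtained from the corresponding assertion of Lemma \ref{lem:rn} by passing to preimages under $\pi$. For instance, in the case $q\equiv 1\pmod{4}$, the equivalences
\[
a\in R_1 \iff \bar{a}\in\bar{R}_1 \iff \sigma(\bar{a})\in\bar{R}_1 \iff \overline{\sigma(a)}\in\bar{R}_1 \iff \sigma(a)\in R_1
\]
(and the analogous chain with $\tau$) give $\sigma(R_1)=\tau(R_1)=R_1$; the identities $\sigma(N_1)=N_{-1}$, $\sigma(R_{-1})=R_{-1}$ and all three identities in the case $q\equiv 3\pmod{4}$ are established in exactly the same way. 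There is no substantive obstacle: the corollary is a mechanical lift of Lemma \ref{lem:rn} along the reduction map, the only point requiring any verification being that $\sigma$ and $\tau$ preserve $U\setminus U_1$, which was handled above.
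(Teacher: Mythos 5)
Your argument is correct, and it is exactly the implicit reason why the paper presents this statement as a corollary of Lemma \ref{lem:rn} with no further proof: the sets $R_1, R_{-1}, N_1, N_{-1}$ are defined as reduction-map preimages of $\bar{R}_1, \bar{R}_{-1}, \bar{N}_1, \bar{N}_{-1}$, and $\sigma$, $\tau$ commute with reduction on $U\setminus U_1$, so the identities in $k^\times$ lift verbatim. Your careful check that $\tau$ preserves $U\setminus U_1$ (i.e.\ that $1-a$ is again a unit not in $U_1$ when $\bar{a}\neq 0,1$) is the one point worth spelling out, and you handled it correctly.
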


We will need the following elementary result below:
\begin{lem}\label{lem:el}
Let $k$ be a finite field of odd order, and let $n\in\bar{N}$. Then there exist $r_1,r_2\in\bar{R}$ such that 
$n=r_1+r_2$.
\end{lem}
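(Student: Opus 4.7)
The plan is to prove this by a direct character sum count. Let $\chi \colon k \to \{-1,0,1\}$ be the quadratic character ($\chi(x) = 1$ if $x$ is a nonzero square, $-1$ if $x$ is a nonsquare, $0$ if $x=0$), and for a fixed $n \in \bar N$ let
\[
N(n) := \#\{\,r \in k : r \neq 0,\ r \neq n,\ \chi(r)=\chi(n-r)=1\,\}.
\]
The lemma is equivalent to $N(n) \geq 1$, so it suffices to evaluate $N(n)$ exactly and observe that the answer is positive.

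Using $\frac{1+\chi(x)}{2}$ as the indicator of nonzero squares, I would expand
\[
N(n) \;=\; \frac{1}{4}\sum_{\substack{r \in k \\ r \neq 0,\,n}} \bigl(1+\chi(r)\bigr)\bigl(1+\chi(n-r)\bigr).
\]
The four resulting sums are routine: $\sum 1 = q-2$; the linear sums $\sum \chi(r)$ and $\sum \chi(n-r)$ each equal $-\chi(n)$ (since $\sum_{x \in k}\chi(x)=0$); and for the cross-term
\[
\sum_{r \neq 0,n} \chi\!\bigl(r(n-r)\bigr),
\]
the substitution $s = n/r - 1$ (valid on $k^\times$) turns $r(n-r)$ into $r^2 s$, so $\chi(r(n-r)) = \chi(s)$ and the sum becomes $\sum_{s \neq -1}\chi(s) = -\chi(-1)$.

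Combining everything, for $n \in \bar N$ (so $\chi(n) = -1$) one obtains
\[
N(n) \;=\; \frac{q - \chi(-1)}{4},
\]
which equals $(q-1)/4$ when $q \equiv 1 \pmod 4$ and $(q+1)/4$ when $q \equiv 3 \pmod 4$. In particular $N(n) \geq 1$ for every odd $q \geq 3$, which gives the desired decomposition $n = r_1 + r_2$ with $r_1,r_2 \in \bar R$.

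There is no real obstacle here; the only delicate point is the evaluation of the cross-term $\sum \chi(r(n-r))$, and the substitution $s = n/r - 1$ handles it cleanly. (As a sanity check, the count $(q \mp 1)/4$ matches the cardinalities in Lemma \ref{lem:rn}: writing $n = r_1 + r_2 = r_1(1 + r_2/r_1)$ expresses the existence of such a decomposition in terms of the orbit data for the $S_3$-action, consistent with the case analysis carried out there.)
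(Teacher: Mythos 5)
Your proof is correct, and it takes a genuinely different route from the paper. The paper handles the two congruence classes of $q$ separately and argues constructively: for $q\equiv 1\pmod 4$ it uses the (previously established) fact that $\bar N_1\neq\emptyset$ to find $r\in\bar R$ with $rn\in\bar N_1$, sets $s=1-rn\in\bar R$, and exhibits $n=1/r-s/r$ with both summands in $\bar R$ because $-1\in\bar R$; for $q\equiv 3\pmod 4$ it instead finds $r$ with $rn\in\bar N_{-1}$, sets $m=1-rn\in\bar N$, and notes that $-m/r\in\bar R$ because $-1\in\bar N$. Thus the paper's argument leans entirely on the orbit bookkeeping of Lemma \ref{lem:rn} and on the sign of $\chi(-1)$ to fix up the minus sign. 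Your character-sum computation is more uniform: it produces the exact count $N(n)=\bigl(q-\chi(-1)\bigr)/4$ in a single calculation, with the case split appearing only in the final numerical evaluation, and the positivity is then immediate even at the boundary cases $q=3$ and $q=5$. It is also entirely self-contained, not invoking Lemma \ref{lem:rn}, although, as your sanity check observes, the count you obtain agrees with the cardinalities computed there. The only minor price is that the character-sum machinery (Jacobsthal-type sums) is a heavier tool than the elementary orbit argument the paper had already set up for its own purposes.
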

\begin{proof}
If $q\equiv 1\pmod{4}$, there exists $r\in \bar{R}$ with $rn\in \bar{N}_{1}$. Thus $1-rn=s\in \bar{R}$ and 
\[
n=\frac{1}{r}+\left(-\frac{s}{r}\right)=r_1+r_2.
\]

If $q\equiv 3\pmod{4}$ then there exists $r\in \bar{R}$ with $rn\in \bar{N}_{-1}$. Then $1-nr=m\in \bar{N}$ and thus  
\[
n=\frac{1}{r}+\left(-\frac{m}{r}\right)=r_1+r_2.
\]
\end{proof}

\subsection{The main result}
Let $F$ be a local field with discrete value $v$ and finite residue field $k$ of odd order. Throughout $\pi$ 
will denote a choice of uniformizer.

Let $M(F)$ denote the $\zhalf{\sgr{F}}$-module $\epm{-1}\zhalf{\aug{F}}$.

\begin{lem}\label{lem:mf}

\begin{enumerate}  
\item If $\rho\not=\chi_1$ then 
there is a natural short exact sequence of $\zhalf{\sgr{F}}$-modules
\[
\xymatrix{
0\ar[r]
&
\zhalf{\rrrrbl{F}}_\rho\ar[r]
&
\zhalf{\rrrrpb{F}}_\rho\ar[r]^{\bar{\lambda}_1}
&
M(F)_\rho\ar[r]
&
0.
}
\]
\item If $q\equiv 1\pmod{4}$ and if $\rho\not=\chi_1$ then
\[
\zhalf{\rrrrbl{F}}_\rho=\zhalf{\rrrrpb{F}}_\rho.
\]
\item If $q\equiv 3\pmod{4}$ then 
\[
\zhalf{\rrrrbl{F}}_\chi=\zhalf{\rrrrpb{F}}_\chi.
\]
\end{enumerate}
 \end{lem}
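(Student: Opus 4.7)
The strategy is to prove part (1) first and deduce (2), (3) by checking when $M(F)_\rho$ vanishes. The first task is to verify that the assignment $\gpb{x} \mapsto \epm{-1}\pf{1-x}\pf{x}$ gives a well-defined $\sgr{F}$-module homomorphism $\bar{\lambda}_1 \colon \zhalf{\rrrrpb{F}} \to M(F)$. It kills $\ks{1}{F}$ because Lemma~\ref{lem:kf} gives $\lambda_1(\suss{1}{x}) = -\pp{-1}\pf{x}$, which dies upon multiplication by $\epm{-1}$ (since $\epm{-1}\pp{-1} = 0$ in $\zhalf{\sgr{F}}$); it kills $\sgr{F}\cconst{F}$ because $\cconst{F} \in \rbl{F}$ forces $\lambda_1(\cconst{F}) = 0$. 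The target is consistently $M(F) = \epm{-1}\zhalf{\aug{F}}$ since $\aug{F}/\aug{F}^2$ is $2$-torsion, so $\zhalf{\aug{F}^2} = \zhalf{\aug{F}}$.

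For exactness at the middle on each $\rho$-component with $\rho \ne \chi_1$, the key observation is that the second component $\lambda_2$ of the refined Bloch--Wigner map takes values in $\asym{2}{\Z}{F^\times}$, on which $\sgr{F}$ acts trivially; likewise for $\sym{2}{\F{2}}{\sq{F}}$. Hence the $\rho$-component of the fibre-product module $\rrasym{2}{\Z}{F^\times}$ collapses to $(\aug{F}^2/\pp{-1}\aug{F})_\rho$, which after inverting $2$ is precisely $M(F)_\rho$, and $\rbw|_\rho$ reduces to $\bar{\lambda}_1|_\rho$. Thus $\ker(\bar{\lambda}_1|_\rho) = \zhalf{\rrrrbl{F}}_\rho$.

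The main step is surjectivity of $\bar{\lambda}_1$ onto $M(F)_\rho$. Since $\epm{-1}\idem{\rho} = 0$ whenever $\rho(\an{-1}) = 1$, we only need the case $\rho(\an{-1}) = -1$; then $M(F)_\rho \cong \zhalf{\Z}$ is cyclic, generated by $\idem{\rho}$, and it suffices to find $x \in F \setminus \{0,1\}$ with $\rho(\an{x}) = \rho(\an{1-x}) = -1$, because then $\idem{\rho}\bar{\lambda}_1(\gpb{x}) = 4\idem{\rho}$ generates the target. A short valuation analysis (using $U_1 \subset (F^\times)^2$ for $p$ odd, and $1 - x^{-1} \in U_1$ when $v(x) < 0$) excludes $v(x) \ne 0$ and forces $\an{x} = \an{1-x} = \an{-1}$ with $x, 1-x$ both units. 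In the relevant case $q \equiv 3 \pmod{4}$ and $\rho \in \{\psi, \psi'\}$, Lemma~\ref{lem:el} gives $\overline{-1} = \bar{s}^2 + \bar{t}^2$ in $k$ with $\bar{s}, \bar{t} \ne 0$; Hensel's lemma (odd residue characteristic) lifts this to $-1 = s^2 + t^2$ in $F$, so $x = -s^2$ satisfies $1 - x = -t^2$ and $\an{x} = \an{1-x} = \an{-1}$. Since $\an{-1}$ lies in both $\psi^{-1}(-1)$ and $\psi'^{-1}(-1)$, this single $x$ handles both characters.

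Given (1), parts (2) and (3) follow by vanishing of $M(F)_\rho$. In (2), $q \equiv 1 \pmod{4}$ gives $\an{-1} = 1$, so $\epm{-1} = 0$ and $M(F) = 0$; the sequence in (1) degenerates for every $\rho \ne \chi_1$. In (3), with $q \equiv 3 \pmod{4}$ and $u = -1$, the identity $\an{-1} = \an{\pi}\an{-\pi}$ in $\sq{F}$ yields $\chi(\an{-1}) = (-1)(-1) = 1$, so $\epm{-1}\idem{\chi} = 0$ and $M(F)_\chi = 0$. The main obstacle is the surjectivity step, which rests on the number-theoretic input from Lemma~\ref{lem:el} combined with Hensel lifting.
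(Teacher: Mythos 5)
Your proof is correct and takes essentially the same approach as the paper: use the exact sequence $0 \to \rrrrbl{F} \to \rrrrpb{F} \to \rrasym{2}{\Z}{F^\times}$, observe that the $\rho$-isotypic part of the fibre product collapses (after inverting $2$) to $M(F)_\rho$ because $\asym{2}{\Z}{F^\times}$ carries the trivial $\sgr{F}$-action, and prove surjectivity by exhibiting a unit $x$ with $\an{x}=\an{1-x}=\an{-1}$. The only cosmetic difference is in producing the surjectivity witness: the paper simply picks $n\in N_{-1}$ (nonempty when $q\equiv 3\pmod 4$ by the counting in Lemma~\ref{lem:rn}, and with $1-n\in N_{-1}$ by that same lemma), whereas you re-derive such an element from Lemma~\ref{lem:el} plus a Hensel lift of $-1=\bar{s}^2+\bar{t}^2$; both constructions yield the same square-class data and hence the same computation $\idem{\rho}\bar\lambda_1(\gpb{x})=4\idem{\rho}$.
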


\begin{proof}
 There is  an exact sequence
\[
\xymatrix{
0\ar[r]
&
\rrrrbl{F}\ar[r]
&
\rrrrpb{F}\ar[r]^{\Lambda}
&
\rrasym{2}{\Z}{F^\times}
}
\]  


The kernel of the surjection $\rasym{2}{\Z}{F^\times}\to\aug{F}^2$ is a trivial $\sgr{F}$-module. Furthermore, since 
$\aug{F}/\aug{F}^2\cong\sq{F}$ is a $2$-torsion group, we have $\zhalf{\aug{F}^2}=\zhalf{\aug{F}}$. Putting these 
facts together we obtain an isomorphism of $\sgr{F}$-modules 
\[
\zhalf{\rasym{2}{\Z}{F^\times}}_\rho\cong \zhalf{\aug{F}}_\rho
\]
for all $\rho\not=\chi_1$. This in turn induces an isomorphism 
\[
\zhalf{\rrasym{2}{\Z}{F^\times}}_\rho\cong \left(\frac{\zhalf{\aug{F}}}{\an{\pf{x}\pf{-x}}}\right)_\rho=
\left(\frac{\zhalf{\aug{F}}}{\pp{-1}\zhalf{\aug{F}}}\right)_\rho=
\left(\frac{\zhalf{\aug{F}}}{\ep{-1}\zhalf{\aug{F}}}\right)_\rho\cong\epm{-1}\zhalf{\aug{F}}_\rho=M(F)_\rho
\]  
and the homomorphism 
\[
\zhalf{\rrrrpb{F}}_\rho\to\zhalf{\rrasym{2}{\Z}{F^\times}}_\rho\cong M(F)_\rho
\]
is induced by the map $\bar{\lambda}_1:\rrrrpb{F}\to M(F)$
\[
\gpb{x}\mapsto \epm{-1}\lambda_1(\gpb{x})=\epm{-1}\pf{x}\pf{1-x}.
\]

To see that this homomorphism is surjective, observe first that if $q\equiv 1\pmod{4}$, then $\an{-1}=1$ and hence 
$\epm{-1}=0$ in $\zhalf{\sgr{F}}$. Thus $M(F)=0$ in this case, and there is nothing to prove. This also implies 
statement (2) of the lemma.

So we can suppose that  $q\equiv 3\pmod{4}$. Then, as a $\zhalf{\Z}$-module, $\zhalf{\aug{F}}$ is free of rank $3$ with 
basis $\epm{-1}$, $\epm{\pi}$ and $\epm{-\pi}$. Then 
$\epm{-1}(\zhalf{\aug{F}})$ is a free $\zhalf{\Z}$-module of rank $2$ with basis $\epm{-1}$ and $\epm{\pi}\epm{-1}$ 
(since, for example, $\epm{-1}\epm{-\pi}=-(\epm{-1}+\epm{\pi}\epm{-1})$.) As 
an $\zhalf{\sgr{F}}$-module, it is thus generated by $\epm{-1}$. 

Now let $n\in N_{-1}$. Then  $1-n\in N_{-1}$ and 
\[
\lambda_1\left(\frac{1}{4}\gpb{n}\right)
= \frac{1}{4}\pf{n}\pf{1-n}
=\frac{1}{4}\pf{-1}\pf{-1}
=(\epm{-1})^2=\epm{-1}
\]
so that the map  is surjective.
 
Finally, observe also that if $q\equiv 3\pmod{4}$ then $\idem{\chi}\epm{-1}=0$ so that $M(F)_\chi=0$ and statement (3)
also follows. 
\end{proof}

We will use the following notation in the remainder of this section: 
Given a character, $\rho$ of $\sq{F}$ and $a\in F^\times$, we will denote $\idem{\rho}(\gpb{a})\in \zhalf{\rrrrpb{F}}$ 
by $\gpb{a}_\rho$.

In the each of next few lemmas, the strategy of proof is largely the same. 
 The proof involves showing that $\gpb{a}_\rho=0$ for $a$ belonging to some subset of $F^\times$. We 
begin with Lemma \ref{cor:ann} to show that $\gpb{a}_\rho=0$ for $a$ belonging to one or more square classes, 
and then use Lemma \ref{lem:s3} to deduce the vanishing 
of $\gpb{a}_\rho$ for a larger set of $a$.  

In order to follow the arguments, it may be helpful for the reader to keep 
the following decompositions in mind: Recall that there are four square classes, or cosets modulo $(F^\times)^2$, in the 
group $F^\times$. Thus
\[
F^\times = (F^\times)^2\cup u\cdot(F^\times)^2\cup \pi\cdot (F^\times)^2\cup u\pi\cdot(F^\times)^2
\]
and 
\[
(F^\times)^2=\bigcup_{n\in\Z}R\pi^{2n}=R\cup\bigcup_{n\not=0}R\pi^{2n}=U_1\cup R_1\cup R_{-1}\cup \bigcup_{n\not=0}R\pi^{2n}.
\]
\[
u\cdot(F^\times)^2=\bigcup_{n\in\Z}N\pi^{2n}=N\cup \bigcup_{n\not= 0}N\pi^{2n}=N_1\cup N_{-1}\cup \bigcup_{n\not= 0}N\pi^{2n}.
\]
\[
\pi\cdot(F^\times)^2=\bigcup_{n\in\Z}R \pi^{2n-1}.
\]
\[
u\pi\cdot (F^\times)^2=\bigcup_{n\in\Z}N \pi^{2n-1}.
\]
\[
U_1=\bigcup_{m\geq 1} (1-U\pi^m)=\bigcup_{n\geq 1}(1-U\pi^{2n})\cup\bigcup_{n\geq 1}(1-U\pi^{2n-1}).
\]

We will  repeatedly use the \emph{key fact} that $U_1=U_1^2$ and thus $1-a$ is a square whenever 
$v(a)>0$.

\begin{lem}\label{lem:prelim}
Let $\rho$ be a character of $\sq{F}$. Then the following are equivalent:
\begin{enumerate}
\item $\gpb{a}_\rho=0$ in $\rrrrpb{F}$ for all $a\in U_1$
\item $\gpb{a}_\rho=0$ in $\rrrrpb{F}$ for all $a$ satisfying $v(a)\not =0$. 
\end{enumerate}
\end{lem}
\begin{proof}
Suppose first that $\gpb{a}_\rho=0$ for all $a\in U_1$. Let $b\in F^\times$ with $v(b)>0$. Then $c:=\tau(b)=1-b\in U_1$. 
Thus $\gpb{c}_\rho=0$ by assumption. It follows that $\gpb{b}_\rho=\gpb{\tau(c)}_\rho=0$ by Lemma \ref{lem:s3} 
with $\alpha=\idem{\rho}$. 

On the other hand, if $b\in F^\times$ with $v(b)<0$, then $v(b^{-1})>0$ and thus $\gpb{b^{-1}}_\rho=0$. But then 
$\gpb{b}_\rho=\gpb{\sigma(b^{-1})}_\rho=0$ by Lemma \ref{lem:s3} again.

Conversely, suppose that $\gpb{a}_\rho=0$ whenever $v(a)\not=0$ and that $b\in U_1$. Then $v(1-b)>0$. Thus 
$\gpb{1-b}_\rho=0$ and hence $\gpb{b}_\rho=\gpb{\tau(1-b)}_\rho=0$.
\end{proof}

\begin{lem}\label{lem:chi}
The $\zhalf{\sgr{F}}$-module homomorphism
\[
S_{\pi,-1}:\zhalf{\rpb{F}}_\chi\to\zhalf{\redpb{k}}
\]
is an isomorphism.
\end{lem}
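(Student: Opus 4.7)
The plan is to prove surjectivity and injectivity separately; surjectivity is essentially immediate while injectivity requires constructing an explicit section.

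For surjectivity, the $\sgr{F}$-action on $\zhalf{\redpb{k}}$ (via $\rho_{\pi,-1}$) sends $\an{v}\mapsto 1$ for every $v\in U$, since $\redpb{k}$ is the $k^\times$-coinvariants of $\rrrpb{k}$, and $\an{\pi}\mapsto -1$. Hence $\idem{\chi}=\ep{u}\epm{\pi}$ acts as the identity on $\zhalf{\redpb{k}}$, so $\zhalf{\redpb{k}}_\chi=\zhalf{\redpb{k}}$. Since $S_{\pi,-1}:\zhalf{\rpb{F}}\to\zhalf{\redpb{k}}$ is a surjective $\sgr{F}$-homomorphism, its restriction to the $\chi$-eigenspace remains surjective.

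For injectivity, my approach is to construct a section $T:\redpb{k}\to\zhalf{\rpb{F}}_\chi$. Fix a set-theoretic lift $\iota: k^\times\setminus\{1\}\to U\setminus U_1$, and on generators define $T(\gpb{\bar a}):=\idem{\chi}\gpb{\iota(\bar a)}$, extended $\zhalf{\Z}$-linearly. The identity $S_{\pi,-1}(T(\gpb{\bar a}))=\gpb{\bar a}$ is immediate from the definition of $S_{\pi,-1}$, so once $T$ descends to a homomorphism on $\redpb{k}=\pb{k}/\kl{k}$, it is a right inverse to $S_{\pi,-1}|_{\zhalf{\rpb{F}}_\chi}$, and combined with surjectivity this forces the map to be an isomorphism.

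Well-definedness of $T$ reduces to three checks. \emph{Independence of the lift}: for $\alpha\in U_1$ and $a\in U\setminus U_1$, the five-term relation $S_{\alpha a,a}$ in $\rpb{F}$ expresses $\gpb{\alpha a}-\gpb{a}$ as an $\sgr{F}$-combination of three symbols $\gpb{w}$, with arguments all lying in $U_1$ (inspection shows $\alpha^{-1}\in U_1$, and both $(1-(\alpha a)^{-1})/(1-a^{-1})$ and $(1-\alpha a)/(1-a)$ reduce to $1$ in $k^\times$). It then suffices to show $\idem{\chi}\gpb{w}=0$ for $w\in U_1$. \emph{Pentagon relations}: applying $S_{\iota(\bar x),\iota(\bar y)}$ and projecting by $\idem{\chi}$, every $\an{\iota(\bar c)}$-coefficient with $\iota(\bar c)\in U$ acts trivially on the $\chi$-eigenspace (since $\chi(\an v)=1$ for all $v\in U$), yielding the relation $R_{\bar x,\bar y}$ up to lift-correction terms handled by the previous step. \emph{Killing $\sus{\bar a}$}: using independence of lift to choose $\iota(\bar a^{-1})=\iota(\bar a)^{-1}$, $T(\sus{\bar a})=\idem{\chi}(\gpb{\iota(\bar a)}+\gpb{\iota(\bar a)^{-1}})$, which differs from $\idem{\chi}\suss{1}{\iota(\bar a)}\in\idem{\chi}\ks{1}{F}$ by a term killed by $\idem{\chi}$; one then invokes Lemma~\ref{lem:kf} to conclude.

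The main obstacle is the first step, specifically showing $\idem{\chi}\gpb{w}=0$ for $w\in U_1$. My expectation is that this uses the expressions for $\bconst{F}$ and $\cconst{F}$ in terms of symbols with arguments in $U_1$, combined with Lemma~\ref{lem:localconst}(2) pinning down $\aug{F}\cconst{F}\cong\Z\cconst{k}$, and Lemma~\ref{lem:el} (on writing a nonsquare residue as a sum of two squares) to produce the auxiliary relations on units needed for the cancellation; the detailed arithmetic depending on the residue class of $w$ modulo higher powers of the maximal ideal is where the bulk of the work lies.
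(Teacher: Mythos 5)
Your plan has the right skeleton — build a $\zhalf{\sgr{F}}$-section $T$ of $S_{\pi,-1}$ by lifting generators, check that it descends, and conclude — and this is exactly the strategy the paper uses. But there are two genuine gaps, one structural and one logical, and the speculation at the end is aimed in the wrong direction.

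First, the target of $T$ cannot be $\zhalf{\rpb{F}}_\chi$. You compute $T(\sus{\bar a})=\idem{\chi}\suss{1}{\iota(\bar a)}$ and then invoke Lemma~\ref{lem:kf} to conclude this vanishes. But Lemma~\ref{lem:kf} says the opposite: $\lambda_1(\suss{1}{x})=-\pp{-1}\pf{x}$, and hence (after inverting $2$) $\lambda_1(\idem{\chi}\suss{1}{\iota(\bar a)})=-\idem{\chi}\pp{-1}\pf{\iota(\bar a)}$, which is nonzero whenever $\bar a$ is a nonsquare in $k^\times$ (note $\pp{-1}=2\ep{-1}$, and $\chi(\an{-1})=1$ in either residue case, so $\idem{\chi}\pp{-1}=2\idem{\chi}$). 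Thus $T$ does \emph{not} descend to $\redpb{k}$ with target $\zhalf{\rpb{F}}_\chi$. The paper avoids this by first replacing $\rpb{F}$ with $\rrrrpb{F}=\rpb{F}/(\ks{1}{F}+\sgr{F}\cconst{F})$ and $\redpb{k}$ with $\rredpb{k}$, via Lemma~\ref{lem:rbfbar} and Lemma~\ref{lem:mf}; in $\rrrrpb{F}$ one has $\suss{1}{x}=0$ and $\cconst{F}=0$, which is also what makes Lemma~\ref{cor:ann} and the $S_3$-dynamics available. Your needed vanishing $\idem{\chi}\gpb{w}=0$ for $w\in U_1$ is likewise only true after this quotient; it is false in $\rpb{F}$.

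Second, the logic ``$T$ is a right inverse, and combined with surjectivity this forces an isomorphism'' is not valid. If $S\circ T=\mathrm{id}$, then $S$ is surjective and $T$ is injective; to conclude $S$ is injective you need $T$ to be \emph{surjective}. The paper states this explicitly and spends most of the proof establishing it: one must show $\gpb{a}_\chi=0$ in $\zhalf{\rrrrpb{F}}_\chi$ whenever $v(a)\neq 0$ (and for $a\in U_1$), since those are the generators not in the image of $T$. Your proposal never addresses the $v(a)\neq 0$ case at all, which is the bulk of the work. The mechanism the paper uses is also different from what you anticipate: it is not Lemma~\ref{lem:localconst} and Lemma~\ref{lem:el} applied to $\bconst{F},\cconst{F}$, but rather Lemma~\ref{cor:ann} (giving $\gpb{a}_\chi=0$ for $a\equiv\pi,u\pi$ mod squares), the $S_3$-action $x\mapsto x^{-1},1-x$ which carries $U\pi^{2n-1}$ into $U_1$, and then an explicit five-term relation $S_{1/\pi,a/\pi}$ to handle $a\in U\pi^{2n}$, with $S_3$ again to cover all of $U_1$.
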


\begin{proof}
By Lemma \ref{lem:rbfbar} it is enough to prove that the map
\[
\bar{S}_{\pi,-1}:\zhalf{\rrrrpb{F}}_\chi\to\zhalf{\rredpb{k}}
\]
is an isomorphism.

To prove the result we will show that there is a well-defined $\zhalf{\sgr{F}}$-module homomorphism 
\[
T:\zhalf{\rredpb{k}}\to \zhalf{\rrrrpb{F}}_\chi,\qquad T(\gpb{\bar{a}}) = \gpb{a}_\chi
\]
(where $a\in U$ maps to $\bar{a}\in k^\times$) which is inverse to $\bar{S}_{\pi,-1}$. 

To show that $T$ is well-defined we must show that $\gpb{au}_\chi=\gpb{a}_\chi$ whenever $a\in U$ and 
$u\in U_1$.  

Since clearly 
\[
\bar{S}_{\pi,-1}\circ T=\id{\zhalf{\rrrbl{k}}},  
\]
it is then only necessary to show that $T$ is surjective: we must show that $\gpb{a}_\chi=0$ whenever $v(a)\not=0$ (since 
if $v(a)=0$ then $\gpb{a}_\chi=T(\gpb{\bar{a}})$ by definition of $T$, and thus $\gpb{a}_\chi$ lies in the image of 
$T$ for all $a\in F^\times$).

Note, to begin with, that since $\idem{\chi}=\epm{\pi}\epm{u\pi}$, we have 
\[
\gpb{a}_\chi=0\mbox{ if } a\equiv -\pi,-u\pi\pmod{(F^\times)^2}
\]
by Lemma \ref{cor:ann}.
This is clearly equivalent to
$\gpb{a}_\chi=0$ if $a$ lies in one of the two square classes $\pi\cdot (F^\times)^2$ and $u\pi\cdot (F^\times)^2$.
Thus 
 $\gpb{a}_\chi=0$ if $a \in U\pi^{2n-1}$ for any  $n\in \Z$.


Thus for any $n\geq 1$, $\gpb{a}_\chi=0$ if $a\in \tau(U\pi^{2n-1})=1-U\pi^{2n-1}\subset U_1$. 

Consider now the case $a=w\pi^{2n}$, $w\in U$ and $n\geq 1$.  By considering the relation
$\left(S_{1/\pi,a/\pi}\right)_\chi$, we see that 
\begin{eqnarray*}
0=
\gpb{\frac{1}{\pi}}_\chi-\gpb{\frac{a}{\pi}}_\chi+\an{\pi}\gpb{a}_\chi
-\an{\pi-1}\gpb{-w\pi^{2n-1}\frac{1-\pi}{1-w\pi^{2n-1}}}_\chi+
\an{-\pi}\gpb{\frac{1}{\pi}\cdot\frac{1-\pi}{1-w\pi^{2n-1}}}_\chi = \an{\pi}\gpb{a}_\chi
\end{eqnarray*} 
since all terms other than the third belong to the square classes $\pi\cdot (F^\times)^2$ or 
$u\pi\cdot (F^\times)^2$.

Thus we deduce that $\gpb{a}_\chi=0$ if $a\in U\pi^{2n}$ for any $n\geq 1$.  It follows that 
$\gpb{a}_\chi=0$ whenever $v(a)>0$. By considering $\sigma(a)=1/a$ it follows that $\gpb{a}_\chi=0$ 
whenever $v(a)\not= 0$. From Lemma \ref{lem:prelim} it follows that $\gpb{a}_\chi=0$ whenever $a\in U_1$ also.  

Finally, let $a\in U\setminus U_1$ and $w\in U_1$. Then
\[
0=\left(S_{a,aw}\right)_\chi=\gpb{a}_\chi-\gpb{aw}_\chi+\an{a}\gpb{w}_\chi-\an{a^{-1}-1}\gpb{w\cdot\frac{1-a}{1-aw}}_\chi+
\an{1-a}\gpb{\frac{1-a}{1-aw}}_\chi
\]
which gives $\gpb{a}_\chi=\gpb{aw}_\chi$ since the last three terms all lie in $U_1$. This proves the Lemma.
\end{proof}

\begin{lem}\label{lem:psi1} 
Let $F$ be a local field whose residue field $k$ has order $q$ with $q\equiv1\pmod{4}$. Then
\[
\zhalf{\rrrrpb{F}}_\psi=\zhalf{\rrrrpb{F}}_{\psi'}=0.
\]
\end{lem}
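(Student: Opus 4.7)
I show that $\gpb{a}_\psi = 0$ for every $a \in F^\times\setminus\{1\}$; the case of $\psi'$ is completely symmetric, interchanging $\an{\pi}$ and $\an{u\pi}$ (since $\idem{\psi'}=\epm{u}\epm{\pi}$ versus $\idem{\psi}=\epm{u}\epm{u\pi}$). Since $q\equiv 1\pmod 4$, Hensel's lemma places $-1\in(F^\times)^2$, so $\an{-1}=1$ in $\sgr{F}$ and Lemma \ref{cor:ann} reduces to $\epm{b}\gpb{a}=0$ whenever $\an{a}=\an{b}$. This kills $\gpb{a}_\psi$ for $\an{a}\in\{\an{u},\an{u\pi}\}$. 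In $\rrrrpb{F}$ both $\suss{1}{x}$ and $\suss{2}{x}$ vanish (the former by construction, the latter via Theorem \ref{thm:df} together with $\cconst{F}=0$), so $\gpb{x^{-1}}=\gpb{1-x}=-\gpb{x}$, giving an $S_3$-action on symbols satisfying $\alpha\gpb{a}=0$ iff $\alpha\gpb{y}=0$ for every $y$ in the orbit of $a$. Applying Lemma \ref{lem:rn}(1), $\tau(R_{-1})=N_1$, so $\gpb{a}_\psi=0$ for $a \in R_{-1}$ as well.

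It remains to treat $\an{a}\in\{1,\an{\pi}\}$. For $v(a)$ odd with $\an{a}=\an{\pi}$, by replacing $a$ with $1/a$ via $\sigma$ if necessary I may assume $v(a)>0$. Apply $S_{x,ax}$ with $x\in N_{-1}$: then $\an{x}=\an{1-x}=\an{u}$, $\an{(1-x)/x}=1$, and $v(ax)>0$ forces $1-ax\in U_1$, so $\an{1-ax}=1$. Under $\idem{\psi}$, the terms $\gpb{x}$ and $\gpb{ax}$ have square classes $\an{u}$ and $\an{u\pi}$ (both killed), the 4th and 5th term arguments have classes $\an{u\pi}$ and $\an{u}$ (also killed), and only $\idem{\psi}\an{x}\gpb{a}=-\gpb{a}_\psi$ survives, yielding $\gpb{a}_\psi=0$.

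For $\an{a}=1$, apply $S_{a,r}$ with $r\in R_{-1}$. Since $\an{r}=1$ and $\an{1-r}=\an{u}$, the 4th and 5th term arguments acquire a factor $\an{u}$; provided $\an{1-a}\in\{1,\an{\pi}\}$ (i.e., the unit part of $1-a$ is a square in $k$), their square classes lie in $\{\an{u},\an{u\pi}\}$ and are killed. Moreover, $\gpb{r/a}_\psi=0$ whenever $r/a\in R_{-1}$: this is automatic for $a\in U_1$ (since $\bar{a}=1$ forces $\overline{r/a}=\bar{r}\in\bar{R}_{-1}$), while for $a\in R_1$ it is equivalent to $\bar{a}-\bar{r}\in\bar{N}$. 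Under these conditions the relation collapses to $\gpb{a}_\psi=0$. The remaining subcases are handled via the $S_3$-orbit: if $a\in U_1$ has $1-a$ with nonsquare unit part in $k$, then $\an{1-a}\in\{\an{u},\an{u\pi}\}$ and $\gpb{a}_\psi=-\gpb{1-a}_\psi=0$ by the first paragraph; if $v(a)$ is even and nonzero with $\an{a}=1$, then $\tau(a)\in U_1$ (for $v(a)>0$) or $\sigma(a)$ has positive valuation (for $v(a)<0$), reducing to the $U_1$ case.

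The principal obstacle is the residue-field existence statement needed in the $R_1$-case: for every $\bar{a}\in\bar{R}_1$ one must exhibit $\bar{r}\in\bar{R}_{-1}$ with $\bar{a}-\bar{r}\in\bar{N}$. This is vacuous for $q=5$ since $|\bar{R}_1|=(q-5)/4=0$, and for $q\geq 9$ follows from an elementary counting estimate analogous to (and slightly strengthening) Lemma \ref{lem:el}: the number of valid $\bar{r}$ is $(q-1)/8+O(\sqrt{q})$ by a standard quadratic character-sum estimate, hence positive throughout.
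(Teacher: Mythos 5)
Your overall structure mirrors the paper's: annihilation via Lemma~\ref{cor:ann} (with $-1$ a square, so $\epm{b}\gpb{a}=0$ when $\an{a}=\an{b}$), propagation along $S_3$-orbits using the vanishing of $\suss{1}{\cdot}$ and $\suss{2}{\cdot}$ in $\rrrrpb{F}$, five-term relations chosen so that four of the five symbols already lie in killed square classes, and a final residual case of square units $a\in R_1$. (You work with $\idem{\psi}=\epm{u}\epm{u\pi}$ rather than the paper's $\epm{u}\epm{\pi}$ --- the paper's proof appears to have its $\psi,\psi'$ labels transposed relative to its own definitions --- so your intermediate cases are permuted, but since both characters must be handled and the roles of $\pi$ and $u\pi$ are symmetric, this is immaterial.) The $S_{x,ax}$ relation with $x\in N_{-1}$ for the odd-valuation case, and the $S_{a,r}$ relation with $r\in R_{-1}$ for $a\in U_1\cup R_1$, are both correct and check out.

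The genuine gap is in the $R_1$ case. You need, for $\bar{a}\in\bar R_1$, the existence of $\bar r\in\bar R_{-1}$ with $\bar a-\bar r\in\bar N$, and you justify this by asserting the count is $(q-1)/8+O(\sqrt q)$ ``hence positive throughout'' for $q\ge 9$. An unquantified $O(\sqrt q)$ cannot settle $q=9,13,17,25,\dots$, where the Weil-type error ($\approx 2\sqrt q$ for the cubic-argument triple character sum, plus Jacobi-sum terms) dwarfs the main term $(q-1)/8$; at $q=9$ the main term is $1$. The paper avoids this entirely by pigeonhole: your condition $\bar a-\bar r\in\bar N$ is equivalent (dividing by $\bar a\in\bar R$) to $\bar r/\bar a\in\bar R_{-1}$, i.e.\ to $\bar R_{-1}\cap\bar a\bar R_{-1}\ne\emptyset$; both sets have $(q-1)/4$ elements, both lie in $\bar R\setminus\{1\}$ (since $1\notin\bar R_{-1}$, and $\bar a^{-1}\in\sigma(\bar R_1)=\bar R_1$ so $1\notin\bar a\bar R_{-1}$), and $2\cdot\tfrac{q-1}{4}>\tfrac{q-3}{2}=|\bar R\setminus\{1\}|$, forcing an intersection. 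This is the counting step you should substitute for the character-sum appeal; as stated, your argument does not close for small $q$.
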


\begin{proof}
We treat the case of $\psi$. Clearly the case of $\psi'$ is identical since it only involves a switch in our choice of 
uniformizer.

Since $\psi=\epm{u}\epm{\pi}$ and since $-1\in(F^\times)^2$ we have 
\[
\gpb{a}_\psi=0 \mbox{ if } a\equiv u,\pi\pmod{(F^\times)^2}
\]
by Lemma \ref{cor:ann}.

Thus $\gpb{a}_\psi=0$ if $a\in N\pi^{2n}\cup R\pi^{2n-1}$ for any $n\in \Z$.

We must prove that $\gpb{a}_\chi=0$ also for $a\in (F^\times)^2\cup u\pi\cdot (F^\times)^2$.

Taking the $S_3$-action into account as in the last lemma, it follows that 
\[
\gpb{a}_\psi=0 \mbox{ if } a\in 1-R\pi^{2n-1}\cup 1-N\pi^{2n}\mbox{ for any }n\geq 1.
\]

Furthermore, if $a\in R_{-1}$ then $\tau(a)\in u\cdot (F^\times)^2$. Thus $\gpb{\tau(a)}_\psi=0$ and hence $\gpb{a}_\psi=0$.  

Consider now $a=r\pi^{2n}$ with $r\in R$ and $n\geq 1$.  Then we have 
\begin{eqnarray*}
0=\gpb{\frac{\pi}{a}}_\psi-\gpb{\pi}_\psi+\an{\pi}\gpb{a}_\psi
-\gpb{\frac{a-\pi}{1-\pi}}_\psi
+\an{\pi}\gpb{\frac{1}{a}\cdot\frac{a-\pi}{1-\pi}}_\psi
\end{eqnarray*}
which forces $\an{\pi}\gpb{a}_\psi=0=\gpb{a}_\psi$ since the other four terms all belong to the square class 
$\pi\cdot (F^\times)^2$.

By considering $\sigma(a)=a^{-1}$, we deduce that $\gpb{b}_\psi=0$ for all $b\in R\pi^{2n}$ and all $n\not= 0$.

Next, we consider $a=n\pi^{2m-1}$ with $n\in N$ and $m\geq 1$. 
By Lemma \ref{lem:el}, we can write $n=r+s$ with $r,s\in R$. 

Let $x=r\pi^{2m-1}$. Observe that 
\begin{eqnarray*}
w:=&\frac{1-x}{1-a}=\frac{1-r\pi^{2m-1}}{1-n\pi^{2m-1}}
=(1-r\pi^{2m-1})(1+n\pi^{2m-1}+\cdots)\\
=& 1+(n-r)\pi^{2m-1}+\cdots = 1+st\pi^{2m-1} 
\end{eqnarray*}
with $t\in U_1$. Thus $w\in 1-R\pi^{2n-1}$ since $s\in R$.

Thus we have 
\begin{eqnarray*}
0=\gpb{x}_\psi-\gpb{a}_\psi+\an{\pi}\gpb{\frac{n}{r}}_\psi-\an{\pi}\gpb{\frac{n}{r}\cdot w}_\psi+\gpb{w}_\psi=-\gpb{a}_\psi
\end{eqnarray*}
since 
$x\in \pi\cdot (F^\times)^2$, $\frac{n}{r},\frac{nw}{r}\in N\subset u\cdot (F^\times)^2$ and, 
as noted above,  $w\in 1-R\pi^{2m-1}$.

By considering $\sigma(a)$ also, we deduce that $\gpb{b}_\psi=0$ for all $b\in N\pi^{2m-1}$ for any $m\in \Z$.

Thus we have shown that  $\gpb{a}_\psi=0$ whenever $v(a)\not=0$ and whenever $a\in R_{-1}$. 
By Lemma \ref{lem:prelim} it follows that $\gpb{a}_\psi=0$ whenever $a\in U_1$.

It remains to prove that $\gpb{a}_\psi=0$ whenever $a\in R_1$.   

 Let $a\in R_1$. Observe that $\{ as\ |\ s\in R_{-1}\}\cap R_{-1}\not= \emptyset$ since 
$|\bar{R}_{-1}|=1+|\bar{R}_1|$ by Lemma \ref{lem:rn} (1). It follows that we can write $a=t/s$ with $s,t\in R_{-1}$.

Observe that $(1-s)/(1-t)\in R_{-1}$ also: Since $1-s,1-t\in N$ we have $(1-s)/(1-t)\in R$.  Since $a= s/t\in R_1$,
\[
1-\frac{s}{t}=\frac{t-s}{t}\in R
\] 
and thus $s-t,t-s\in R$.   Hence 
\[
1-\frac{1-s}{1-t}=\frac{s-t}{1-t}\in N
\]
so that $(1-s)/(1-t)\in R_{-1}$.  

Applying the same argument to $s^{-1},t^{-1}$ shows that $(1-s^{-1})/(1-t^{-1})\in R_{-1}$ also. Thus 
\[
0=\gpb{s}_\psi-\gpb{t}_\psi+\gpb{a}_\psi-\an{u}\gpb{\frac{1-s^{-1}}{1-t^{-1}}}_\psi+\an{u}\gpb{\frac{1-s}{1-t}}_\psi=\gpb{a}_\psi
\]
proving the lemma.
\end{proof}

\begin{lem}\label{lem:psi3}
Let $F$ be a local field with residue field $k$ of order $q$ and $q\equiv 3\pmod{4}$.
Then 
\[
\zhalf{\rrrrbl{F}}_\psi=\zhalf{\rrrrbl{F}}_{\psi'}=0.
\]
\end{lem}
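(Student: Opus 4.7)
My plan for the $\psi$ case---the $\psi'$ case being entirely symmetric under the interchange of $\pi$ and $-\pi$---is to show that $\zhalf{\rrrrpb{F}}_\psi$ is cyclic as a $\zhalf{\Z}$-module, generated by a single class $g$, and that $\bar{\lambda}_1(g)$ generates the rank-one free $\zhalf{\Z}$-module $M(F)_\psi$. Injectivity of $\bar{\lambda}_1$ on the $\psi$-component will then force $\zhalf{\rrrrbl{F}}_\psi = 0$ via Lemma \ref{lem:mf}(1).

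The first step is to identify the possibly-nonzero generators $\gpb{a}_\psi := \idem{\psi}\gpb{a}$. Since $\idem{\psi} = \epm{-\pi}\epm{-1}$, Corollary \ref{cor:ann} gives $\gpb{a}_\psi = 0$ whenever $a$ lies in the square class $\an{1}$ or $\an{\pi}$. In $\zhalf{\rrrrpb{F}}$ the relations $\suss{1}{a}=0$ and $2\bconst{F}=0$ yield $\gpb{a^{-1}} = -\an{-1}\gpb{a}$ and $\gpb{1-a} = -\an{-1}\gpb{a}$, so after projection to the $\psi$-part (where $\psi(\an{-1})=-1$) we get $\gpb{a^{-1}}_\psi = \gpb{1-a}_\psi = \gpb{a}_\psi$. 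Combining with $U_1\subset(F^\times)^2$ (by Hensel) and $R=U\cap(F^\times)^2$, this $S_3$-symmetry annihilates: every $a$ in class $\an{-\pi}$ (since $\tau(a)$ or $\tau(a^{-1})$ lies in $U_1$), every $a$ in class $\an{-1}$ with $v(a)\neq 0$ (same reason), and every unit $a\in N_1$ (since $\tau(a)\in R$). Only the generators $\gpb{a}_\psi$ with $a\in N_{-1}$ can survive.

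To show that these remaining generators all coincide I will apply the defining relation $S_{a,a'}$ for $a,a'\in N_{-1}$ and project to the $\psi$-part. The three auxiliary symbols that appear are $\gpb{a'/a}_\psi$, $\gpb{(1-a^{-1})/(1-a'^{-1})}_\psi$, and $\gpb{(1-a)/(1-a')}_\psi$; using $\sigma(N_{-1})=\tau(N_{-1})=N_{-1}$ from Corollary \ref{cor:rn}(2), a direct reduction computation shows that each of the three underlying ratios is a unit whose reduction lies in $\bar{R}$, hence lies in $R\subset(F^\times)^2$. All three terms therefore vanish in $\zhalf{\rrrrpb{F}}_\psi$, and the relation collapses to $\gpb{a}_\psi = \gpb{a'}_\psi$. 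Consequently $\zhalf{\rrrrpb{F}}_\psi = \zhalf{\Z}\cdot g$ with $g := \gpb{a_0}_\psi$ for any fixed $a_0\in N_{-1}$.

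Finally I compute $\bar{\lambda}_1(g)$. Since $\an{a_0}=\an{1-a_0}=\an{-1}$, we have $\pf{a_0}=\pf{1-a_0}=\pf{-1}=-2\epm{-1}$, so $\bar{\lambda}_1(\gpb{a_0}) = \epm{-1}\pf{-1}^2 = 4\epm{-1}$, and hence $\bar{\lambda}_1(g) = 4\idem{\psi}\epm{-1} = 4\idem{\psi}$. Because $M(F)_\psi$ is a free $\zhalf{\Z}$-module of rank one on $\idem{\psi}$ and $4$ is a unit in $\zhalf{\Z}$, the image $4\idem{\psi}$ generates $M(F)_\psi$; any element $ng$ of the kernel of $\bar{\lambda}_1|_\psi$ satisfies $4n\idem{\psi}=0$, which forces $n=0$. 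Hence $\zhalf{\rrrrbl{F}}_\psi = 0$, and the identical argument with $-\pi$ in place of $\pi$ handles $\zhalf{\rrrrbl{F}}_{\psi'}$. The main obstacle is the square-class bookkeeping that forces all three auxiliary ratios in the $S_{a,a'}$-step to land in $R$; once one exploits that $N_{-1}$ is both $\sigma$- and $\tau$-stable, this becomes mechanical.
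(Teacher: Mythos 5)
Your proof is correct and follows essentially the same approach as the paper's. The paper likewise reduces to showing $\bar{\lambda}_1\colon \zhalf{\rrrrpb{F}}_\psi\to M(F)_\psi$ is an isomorphism, shows $\gpb{a}_\psi=0$ for $a\notin N_{-1}$ using Corollary \ref{cor:ann} together with the $S_3$-action, shows $\gpb{n_1}_\psi=\gpb{n_2}_\psi$ for $n_1,n_2\in N_{-1}$ by applying $S_{n_1,n_2}$ and observing all three auxiliary arguments lie in $R$ (by the $\sigma,\tau$-stability of $N_{-1}$), and checks that $\bar{\lambda}_1$ sends $\frac{1}{4}\gpb{n}_\psi$ to $\idem{\psi}$; the only cosmetic difference is that the paper packages the last step as a splitting $\Phi\colon M(F)_\psi\to\zhalf{\rrrrpb{F}}_\psi$ rather than invoking cyclicity plus freeness of $M(F)_\psi$ directly, which is an equivalent formulation.
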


\begin{proof} By Lemma \ref{lem:mf} it is enough to show that $\lambda_1$ induces  isomorphisms 
of $\zhalf{\sgr{F}}$-modules
\[
\zhalf{\rrrrpb{F}}_\psi\cong M(F)_\psi\mbox{ and }\zhalf{\rrrrpb{F}}_{\psi'}\cong M(F)_{\psi'}.
\]

As remarked in the proof of Lemma \ref{lem:psi1}, it is enough to treat the case of the character $\psi$.

Observe that $M(F)_\psi$ is a free $\zhalf{\Z}$-module of rank $1$ with generator $\idem{\psi}=\epm{-1}\epm{\pi}$. 
Thus, if $L$ is any 
$\zhalf{\sgr{F}}$-module and if $a\in L$, there is a unique $\zhalf{\sgr{F}}$-module homomorphism $M(F)_\psi\to L$ sending
 $\idem{\psi}$ to $\idem{\psi}(a)$.

Thus we fix $n\in N_{-1}$ and let $\Phi:M(F)_\psi\to \zhalf{\rrrrpb{F}}_\psi$ be the 
unique $\zhalf{\sgr{F}}$-module homomorphism sending 
$\idem{\psi}$ to $\frac{1}{4}\gpb{n}_\psi$. 

By our previous remarks, $\Phi$ is well-defined and clearly 
\[
\bar{\lambda}_1\circ\Phi=\id{M(F)_\psi}.
\]

So it only remains to show that $\Phi$ is surjective. 
To do this we show that $\gpb{a}_\psi=0$ in $\zhalf{\rrrrpb{F}}_\psi$ whenever 
$a\not\in N_{-1}$ and that $\gpb{n_1}_\psi=\gpb{n_2}_\psi$ for any $n_1,n_2\in N_{-1}$.    

Now 
\[
\gpb{a}_\psi=0 \mbox{ if } a\in (F^\times)^2\cup -\pi\cdot (F^\times)^2.
\]

In particular, $\gpb{a}_\psi=0$ if $a\in U_1$, since $U_1\subset (F^\times)^2$. By Lemma \ref{lem:prelim} it follows 
that $\gpb{a}_\psi=0$ if $v(a)\not=0$.

Thus $\gpb{a}=0$ if $a\in \pi\cdot (F^\times)^2$ and $\gpb{a}=0$ if $a\in -1\cdot (F^\times)^2$ and $v(a)\not= 0$. 
All of this shows that 
\[
\gpb{a}_\psi=0\mbox{ for any }a\not\in N
\]
since $N=\{ a\in -1\cdot (F^\times)^2\ |\ v(a)=0\}$.

However, if $a\in N_1$, then $b=\tau(a)\in (F^\times)^2$. Thus $\gpb{b}_\psi=0$ and hence $\gpb{a}_\psi=\gpb{\tau(b)}_\psi=0$.
So we have shown that $\gpb{a}_\psi=0$ for all $a\not\in N_{-1}$.

Finally, let $n_1,n_2\in N_{-1}$. Then in $\zhalf{\rrrrpb{F}}_\psi$ we have 
\begin{eqnarray*}
0=\gpb{n_1}_\psi
-\gpb{n_2}_\psi
+\an{-1}\gpb{\frac{n_2}{n_1}}_\psi
-\an{-1}\gpb{\frac{1-{n_1}^{-1}}{1-{n_2}^{-1}}}_\psi
+\gpb{\frac{1-n_1}{1-n_2}}_\psi=\gpb{n_1}_\psi-\gpb{n_2}_\psi
\end{eqnarray*}
since clearly, from the definition of $N_{-1}$,
\[
\frac{n_2}{n_1},\quad \frac{1-n_1}{1-n_2},\quad \frac{1-{n_1}^{-1}}{1-{n_2}^{-1}}\in R.
\]
This proves the lemma.
\end{proof}

Putting all of these results together gives the following calculation of the third homology of $\spl{2}{F}$:

\begin{thm}\label{thm:local}
Let $F$ be a local field with residue field $k$ of odd order. 
If $\Q_3\subset F$, suppose that $[F:\Q_3]$ is odd.  
 
Then  there is an isomorphism of $\zhalf{\sgr{F}}$-modules
\[
\ho{3}{\spl{2}{F}}{\zhalf{\Z}}\cong \zhalf{\kind{F}}\oplus \zhalf{\pb{k}}
\]
in which $F^\times$ acts trivially on the first factor, while (the square class  of) 
any uniformizer acts as multiplication by $-1$ on the second factor. 

\end{thm}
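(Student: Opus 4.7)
The plan is to combine the short exact sequence of Theorem \ref{thm:main} with a character decomposition under $G=\sq{F}$ and then invoke the various lemmas that have been established in Sections \ref{sec:rpf} and \ref{sec:local}. Since $|G|=4$ is a unit in $\zhalf{\Z}$, every $\zhalf{\sgr{F}}$-module $M$ splits as $M = M_{\chi_1}\oplus M_\chi\oplus M_\psi\oplus M_{\psi'}$ via the idempotents $\idem{\rho}$. Apply this to $\ho{3}{\spl{2}{F}}{\zhalf{\Z}}$. The $\chi_1$-summand is the module of $F^\times$-coinvariants, which equals $\zhalf{\kind{F}}$ by Mirzaii's theorem recalled in the introduction. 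For $\rho\ne\chi_1$, the trivial $G$-action on $\Tor{\mu_F}{\mu_F}$ together with Theorem \ref{thm:main} (exact after inverting $2$) yields $\ho{3}{\spl{2}{F}}{\zhalf{\Z}}_\rho = \zhalf{\rbl{F}}_\rho$.

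Thus the theorem reduces to the two claims: $\zhalf{\rbl{F}}_\psi = \zhalf{\rbl{F}}_{\psi'} = 0$, and $\zhalf{\rbl{F}}_\chi \cong \zhalf{\bl{k}}$ with $\an{\pi}$ acting as $-1$. For the vanishing, first apply Corollary \ref{cor:psipsi} to reduce to $\zhalf{\rrrrbl{F}}_\psi = \zhalf{\rrrrbl{F}}_{\psi'} = 0$. When $q\equiv 1 \pmod{4}$ this follows from Lemma \ref{lem:psi1}, where the stronger vanishing $\zhalf{\rrrrpb{F}}_\psi = \zhalf{\rrrrpb{F}}_{\psi'}=0$ is proved; when $q\equiv 3\pmod{4}$ this is precisely Lemma \ref{lem:psi3}.

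For the $\chi$-summand, I would apply Lemma \ref{lem:rbfbar}, which tells us that $S_{\pi,-1}\colon\zhalf{\rbl{F}}_\chi\to\zhalf{\redpb{k}}$ is an isomorphism iff $\bar{S}_{\pi,-1}\colon\zhalf{\rrrrbl{F}}_\chi\to\zhalf{\rredpb{k}}$ is; this in turn uses Lemma \ref{lem:localconst} (2) to identify $\aug{F}\cconst{F}$ with $\Z\cdot\cconst{k}$. Next, Lemma \ref{lem:mf} (parts (2) for $q\equiv 1\pmod{4}$ and (3) for $q\equiv 3\pmod{4}$) gives $\zhalf{\rrrrbl{F}}_\chi = \zhalf{\rrrrpb{F}}_\chi$, and Lemma \ref{lem:chi} (whose proof really produces an isomorphism $\zhalf{\rrrrpb{F}}_\chi\cong\zhalf{\rredpb{k}}$) closes the loop. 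Finally $\zhalf{\redpb{k}}\cong\zhalf{\bl{k}}$ because $\kl{k}$ is $2$-torsion and $\pb{k}/\bl{k}\hookrightarrow \asym{2}{\Z}{k^\times}$ is also $2$-torsion. The $\sgr{F}$-module structure is automatic: $\an{\pi}$ acts as $\chi(\an{\pi})=-1$ on the $\chi$-eigenspace.

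Since all the hard work is already in the preceding lemmas, the proof itself is essentially an assembly. The genuine obstacle to this program was the detailed case analysis of the $S_3$-dynamics on square classes needed for Lemmas \ref{lem:chi}, \ref{lem:psi1}, and \ref{lem:psi3}, together with the reduction through the tower $\rbl{F}\twoheadrightarrow\rrbl{F}\twoheadrightarrow\rrrbl{F}\twoheadrightarrow\rrrrbl{F}$ and the careful handling of the constant $\cconst{F}$ via the norm criterion in Theorem \ref{thm:df}. With these in hand, only the clean bookkeeping described above remains.
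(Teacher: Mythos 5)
Your proposal is correct and follows essentially the same route as the paper's own proof: the same character decomposition of $\ho{3}{\spl{2}{F}}{\zhalf{\Z}}$ into $\chi_1,\chi,\psi,\psi'$ pieces, the same identification of the $\chi_1$-piece with $\zhalf{\kind{F}}$, the same chain Lemma \ref{lem:rbfbar} $\to$ Lemma \ref{lem:mf} $\to$ Lemma \ref{lem:chi} for the $\chi$-piece, and the same appeal to Corollary \ref{cor:psipsi} with Lemmas \ref{lem:psi1} and \ref{lem:psi3} for vanishing of the $\psi,\psi'$ pieces. The only difference is that you spell out the bookkeeping slightly more explicitly than the published proof, which cites the lemmas tersely.
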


\begin{proof} 
As observed in section \ref{sec:localprelim} above, there is a $\zhalf{\sgr{F}}$-module decomposition
\[
\ho{3}{\spl{2}{F}}{\zhalf{\Z}}\cong \zhalf{\kind{F}}\oplus\zhalf{\rbl{F}}_{\chi}\oplus\zhalf{\rbl{F}}_{\psi}
\oplus\zhalf{\rbl{F}}_{\psi'}.
\]

Now 
\[
\zhalf{\rbl{F}}_\chi\cong \zhalf{\redpb{k}}=\zhalf{\pb{k}}
\]
by Lemma \ref{lem:rbfbar}, Lemma \ref{lem:mf}  and Lemma \ref{lem:chi}.

Finally, we have 
\[
\zhalf{\rbl{F}}_\psi=\zhalf{\rbl{F}}_{\psi'}=0
\]
by Corollary \ref{cor:psipsi}, Lemma \ref{lem:psi1} and Lemma \ref{lem:psi3}. 

\end{proof}

\begin{rem} If $\Q_3\subset F$ and if $[F:\Q_3]$ is even, then the arguments above show that 
there is a surjective homomorphism 
of $\zhalf{\sgr{F}}$-modules   
\[
\xymatrix{
\ho{3}{\spl{2}{F}}{\zhalf{\Z}}\ar@{>>}[r]
& \zhalf{\kind{F}}\oplus \zhalf{\pb{k}}\\
}
\]
whose kernel has order $1$ or $3$.
\end{rem}
\begin{rem} 
It is not difficult to write down an explicit homology class which generates 
$\ho{3}{\spl{2}{F}}{\zhalf{\Z}}_0\cong\zhalf{\pb{\F{q}}}\cong \zhalf{\bl{\F{q}}}$ 
for a local field $F$ with residue field $\F{q}$ of odd order.

Let $r=\zzhalf{(q+1)}$. 
Let $a\in U\setminus U^2$ and observe that the 
 residue field of $F(\sqrt{a})$ is $\F{q}(\sqrt{\bar{a}})=\F{q^2}$. Let $z$ be an 
element of $\F{q}(\sqrt{\bar{a}})^\times$ of order $r$. By Hensel's Lemma, there exists $\zeta\in F(\sqrt{a})^\times$ 
of order 
$r$ with the property that $\bar{\zeta}=z$; i.e. $\zeta$ is a primitive $r$-th root of unity. Thus, if 
$\zeta=\alpha+\beta\sqrt{a}$, then $z=\bar{\alpha}+\bar{\beta}\sqrt{\bar{a}}$.

Now, for any quadratic field extension $K/L$ with $K=L(\sqrt{b})$ for some $b\in L^\times$, there is an injective 
group homomorphism 
\[
\kappa: K^\times \to \gl{2}{L}, x+y\sqrt{b}\mapsto \matr{x}{by}{y}{x}
\] 
with the property that $\det(\kappa(x))=N_{K/L}(x)\mbox{ for all } x\in K^\times$.

Let
\[
t=\kappa(\zeta)=\matr{\alpha}{a\beta}{\beta}{\alpha}\in\spl{2}{F}.
\]
 Let $\Gamma_t$ be a generator of 
\[
\ho{3}{\an{t}}{\zhalf{\Z}}\cong \Z/r.
\]

Then the image, $\gamma$,  of $\Gamma_t$ under the map 
\[
\ho{3}{\an{t}}{\zhalf{\Z}}\to\ho{3}{\spl{2}{F}}{\zhalf{\Z}}
\]
is represented by the cycle  
\[
\sum_{i=0}^{n-1}1\otimes(1,t,t^i,t^{i+1}).
\]

Let $\bar{t}\in \spl{2}{\F{q}}$ be the matrix obtained by reducing the entries of $t$. Then $\bar{t}=\kappa(z)$ 
has order $r$ by Lemma \ref{lem:arxivcplx}. Thus the  class 
$\gamma$ maps to a generator of $\zhalf{\bl{\F{q}}}$ under the specialization homomorphism $S_{\pi,-1}$ since the diagram 

\[
\xymatrix{
\ho{3}{\an{t}}{\zhalf{\Z}}\ar[r]\ar[d]^-{\cong }
&
\ho{3}{\spl{2}{F}}{\zhalf{\Z}}\ar[r]
&
\zhalf{\rbl{F}}\ar[d]^-{S_{\pi,-1}}
\\
\ho{3}{\an{\bar{t}}}{\zhalf{\Z}}\ar[r]
&
\ho{3}{\spl{2}{\F{q}}}{\zhalf{\Z}}\ar[r]
&
\zhalf{\bl{\F{q}}}
}
\]
commutes, because the composite map on the bottom row is an isomorphism (see Lemma \ref{lem:arxivcplx} above).

Let $\pi$ be a uniformizing parameter and let 
\[
t_\pi=
\matr{\pi}{0}{0}{1}\matr{\alpha}{a\beta}{\beta}{\alpha}\matr{1/\pi}{0}{0}{1}=\matr{\alpha}{\pi a\beta}{\beta/\pi}{\alpha}.
\] 
Then the homology class
\[
\epm{\pi}(\gamma)=\frac{1}{2}\sum_{i=0}^{n-1}1\otimes\left((1,t,t^i,t^{i+1})-(1,t_\pi,t_\pi^i,t_\pi^{i+1})\right)
\]
 is a generator of $\ho{3}{\spl{2}{F}}{\zhalf{\Z}}_0=\epm{\pi}\ho{3}{\spl{2}{F}}{\zhalf{\Z}}$ 
by the arguments of this section.
\end{rem}

\begin{rem} The same techniques as used above apply also to case where the residue field has characteristic $2$. However,
the number of square classes is of the form $2^k$, $k\geq 3$ and the condition $U_1=U_1^2$ is not 
satisfied. The `$S_3$-dynamics' -- i.e. the manner in which the $S_3$ orbits intersect with the 
square classes in $F^\times$ -- become  more complicated as $k$ grows. 
The author has confirmed, for example, that 
\[
\ho{3}{\spl{2}{\Q_2}}{\zhalf{\Z}}\cong\zhalf{\kind{\Q_2}}\oplus\pb{\F{2}}
\]
as expected, but the calculations are long and unedifying. 
\end{rem}

\bibliography{blochgroup}
\end{document}